\renewcommand{\mod}{\operatorname{mod}\nolimits}
\DeclareMathOperator{\ad}{ad}
\newcommand{\rad}{\operatorname{rad}\nolimits}
\newcommand{\mult}{\operatorname{mult}\nolimits}
\newcommand{\Hom}{\operatorname{Hom}\nolimits}
\newcommand{\End}{\operatorname{End}\nolimits}
\newcommand{\Ker}{\operatorname{Ker}\nolimits}
\newcommand{\Tr}{\operatorname{Tr}\nolimits}
\newcommand{\Ext}{\operatorname{Ext}\nolimits}
\newcommand{\frako}{\mathfrak{o}}
\newcommand{\frakt}{\mathfrak{t}}
\newcommand{\G}{\Gamma}
\renewcommand{\L}{\Lambda}
\newcommand{\Z}{{\mathbb Z}}
\newcommand{\C}{{\mathcal C}}
\newcommand{\D}{{\mathcal D}}
\newtheorem{lem}{Lemma}[section]
\newtheorem{prop}[lem]{Proposition}
\newtheorem{cor}[lem]{Corollary}
\newtheorem{thm}[lem]{Theorem}
\theoremstyle{definition}
\newtheorem*{remark}{Remark}
\newtheorem{numremark}[lem]{Remark}
\newtheorem{example}[lem]{Example}
\begin{document}
\subjclass[2010]{16E05, 16E40, 16T05}

\keywords{cohomology, positive characteristic, (connected) Hopf algebras}
\title{The cohomology ring of some Hopf algebras}

\author[Erdmann]{Karin Erdmann}
\address{Karin Erdmann\\
Mathematical Institute\\ 
24--29 St.\ Giles\\
Oxford OX1 3LB\\ 
England}
\email{erdmann@maths.ox.ac.uk}
\author[Solberg]{\O yvind Solberg}
\address{\O yvind Solberg\\
Institutt for matematiske fag\\
NTNU\\ 
N--7491 Trondheim\\ 
Norway}
\email{oyvinso@math.ntnu.no}
\author[Wang]{Xingting Wang}
\address{Xingting Wang\\
Department of Mathematics\\
Temple University, Philadelphia}
\email{wangxingting84@gmail.com}

\date{\today}

\begin{abstract}
Let $p$ be a prime, and $k$ be a field of characteristic 
$p$. We investigate the algebra structure and the structure of
the cohomology ring for the connected Hopf algebras of dimension $p^3$, 
which appear in the classification obtained  in \cite{NWW}. The list consists
of 23 algebras together with two infinite families. We identify the Morita
type of the algebra, and in almost all cases 
this is sufficient to clarify the structure  of the cohomology ring. 
\end{abstract}
\maketitle

\section{Introduction}

The classification of finite-dimensional Hopf algebras in
characteristic zero is well investigated by many researchers, see
survey paper \cite{An}. While this work is stimulating in its own
right, many Hopf algebras of interest are, however, defined over a
field $k$ of positive characteristic, where the classification is much
less known. Some work has been done in this direction. In \cite{Sc},
Scherotzke classified finite-dimensional pointed rank one Hopf
algebras in positive characteristic which are generated by group-like
and skew-primitive elements. Many Hopf algebras in positive
characteristic also come from Nichols algebras which are of much
interest, see \cite{CLW,HW}.

We recall that a Hopf algebra is called connected if it has only one
non-isomorphic simple comodule, i.e., its coradical is $k$. Note that
finite-dimensional connected Hopf algebras only appear in
characteristic $p$, for instance, group algebras of finite $p$-groups,
restricted universal enveloping algebras of restricted Lie algebras,
finite connected group schemes and others. 
In \cite{Hen}, all graded cocommutative
connected Hopf algebras of dimension less than or equal to $p^3$ are
classified. In the recent work of Nguyen, Wang and the third author
\cite{NWW}, connected Hopf algebras of dimension $p^3$ are classified over an
algebraically closed field $k$ of characteristic $p$ under some
assumption on the primitive space of these Hopf algebras. The result
is given below in Theorem \ref{thm:classification}.

We are interested in understanding the structure of the cohomology
rings for these algebras. This only depends on the algebra, not on the
Hopf structure.  This motivates our work, namely we analyse the
algebras, for those which are not local we obtain a presentation by
quiver and relations. This allows us to obtain the cohomology rings.

Section 2 contains the description of the Hopf algebras in question,
and a discussion of antipode, and Namakyama automorphism. Furthermore,
we state our results on the algebra structure. It turns out that the
algebras fall into six different classes, and in sections 3 to 8 we
deal with these.  Section 9 contains a reduction result which relates
finite generation of ext algebras related via adjoint functors, this
is more general.  Section 10 shows that some of the algebras in our
list can be viewed as twisted tensor products (in the sense of
\cite{CR}), and using Section 9 it follows that their cohomology is
Noetherian (with one exception).  Section 11 gives the cohomology
rings of the various algebras occuring in the classification.

For general background on quivers, path algebras and admissible
quotients of path algebras we refer the reader to \cite{ASS,ARS}. For
basic homological algebra we refer the reader to \cite{Ben}.

\section{Connected Hopf algebras of dimension $p^3$}

In this section we give the classification of connected Hopf
algebras of dimension $p^3$ with an explanation of the origin of the
classification from the primitive space of the Hopf algebras.  We
recall some facts about the antipode and the Nakayama automorphism of
finite dimensional Hopf algebras in general, and we apply this
knowledge to the algebras in the classification to describe the
antipode and the order of the Nakayama automorphism.  Finally we
explain how we classify them as algebras, which is the base of our
further investigations.  The classification theorem below assumes $k$ is
algebraically closed. In this paper, the field is usually arbitrary of
characteristic $p$. 

\subsection{Classification of connected Hopf algebras of
  dimension $p^3$}

The Hopf algebras in the classification of finite dimensional
connected Hopf algebras of dimension $p^3$ are always presented in the
form of $k\langle x, y, z\rangle/I$, where $I$ is an ideal generated
by relations. The comultiplication is given by
\begin{align}
\Delta(x) & = x \otimes 1 + 1 \otimes x,\notag\\
\Delta(y) & = y \otimes 1 + 1 \otimes y + Y,\notag\\
\Delta(z) & = z \otimes 1 + 1 \otimes z + Z,\notag
\end{align}
for some elements $Y$ and $Z$ in
$(k\langle x, y, z\rangle/I) \otimes (k\langle x, y, z\rangle/I)$. In
Theorem \ref{thm:classification}, 
proved in \cite{NWW}, the generators for the relations and
the ideal $I$ and the elements $Y$ and $Z$ are given explicitly. If
$Y$ and/or $Z$ are not given, then they are zero.  In order to express
$Y$ and $Z$, we use the notation
\[\omega(t) = \sum_{i=1}^{p-1} \frac{(p -1)!}{i!(p-i)!} t^i\otimes
  t^{p - i}.\]

\begin{thm}\label{thm:classification}
Let $k$ be an algebraically closed field of characteristic $p$.
The connected Hopf algebras over $k$ of dimension $p^3$ are precisely 
\begin{description}
\item[A1] $k[x,y,z]/\langle x^p - x, y^p - y, z^p - z\rangle$
with $Y = \omega(x)$ and $Z = \omega(x)[y\otimes 1 + 1\otimes y +
\omega(x)]^{p -1} + \omega(y)$. 
\item[A2] $k[x,y,z]/\langle x^p, y^p - x, z^p - y\rangle$. 
\item[A3] $k[x,y,z]/\langle x^p, y^p , z^p\rangle$. 
\item[A4] $k[x,y,z]/\langle x^p, y^p , z^p - x\rangle$. 
\item[A5, $p = 2$] $k\langle x,y,z\rangle / \langle x^2, y^2, [x,y],
[x,z], [y, z] - x, z^2 + xy\rangle$. 
\item[A5, $p > 2$] $A(\beta) = k\langle x, y, z\rangle/\langle x^p, y^p, [x,y],
  [x,z], [y, z] - x, z^p +x^{p - 1}y -\beta x\rangle$
for some $\beta\in k$ with $Y = \omega(x)$ and $Z = \omega(x)(y\otimes 1 +
1\otimes y)^{p-1} + \omega(y)$. 
Any two  $A(\beta)$ and $A(\beta')$ are isomorphic as Hopf algebras if and
only if $\beta' = \gamma\beta$ for some $(p^2 + p - 1)$-th root of
unity $\gamma$.  
\item[B1] $k\langle x, y, z\rangle/\langle [x,y] - y, [x,z], [y,z], x^p
- x, y^p, z^p\rangle$ with $Z = \omega(y)$.  
\item[B2] $k\langle x, y, z\rangle/\langle [x,y] - y, [x,z], [y,z] -
yf(x), x^p-x,y^p,z^p-z\rangle$, where
$f(x)=\sum_{i=1}^{p-1}(-1)^{i-1}(p-i)^{-1}x^i$ and $Z = \omega(x)$. 
\item[B3] $k\langle x, y, z\rangle/\langle [x,y]-y,[x,z]-z, [y,z]-y^2,
x^p-x, y^p, z^p\rangle$ with $Z = -2x\otimes y$ for $p > 2$.  
\item[C1] $k[x,y,z]/(x^p - x, y^p - y, z^p - z)$. 
\item[C2] $k[x,y,z]/(x^p - y, y^p - z, z^p)$.  
\item[C3] $k[x,y,z]/\langle x^p, y^p - z, z^p\rangle$. 
\item[C4] $k[x,y,z]/\langle x^p, y^p, z^p\rangle$.  
\item[C5] $k\langle x, y,z\rangle/\langle [x,y] - z, [x,z], [y,z], x^p,
y^p,z^p\rangle$.  
\item[C6] $k\langle x, y,z\rangle/\langle [x,y] - z, [x,z], [y,z], x^p
- z, y^p,z^p\rangle$ for $p>2$.  
\item[C7] $k[x,y,z]/\langle x^p, y^p, z^p-z\rangle$.  
\item[C8] $k[x,y,z]/\langle x^p - y, y^p, z^p-z\rangle$. 
\item[C9] $k[x,y,z]/\langle x^p, y^p - y, z^p-z\rangle$. 
\item[C10] $k\langle x, y,z\rangle/\langle [x,y] - z, [x,z], [y,z], x^p,
y^p,z^p - z\rangle$. 
\item[C11]  $k\langle x, y,z\rangle/\langle [x,y] - y, [x,z], [y,z], x^p
- x, y^p,z^p\rangle$. 
\item[C12] $ k\langle x, y,z\rangle/\langle [x,y] - y, [x,z], [y,z], x^p
- x, y^p - z,z^p\rangle$.
\item[C13] $k\langle x, y,z\rangle/\langle [x,y] - y, [x,z], [y,z], x^p
- x, y^p,z^p - z\rangle$. 
\item[C14] $ k\langle x, y,z\rangle/\langle [x,y] - y, [x,z], [y,z], x^p
- x, y^p - z, z^p - z\rangle$. 
\item[C15] $ k\langle x, y,z\rangle/\langle [x,y] - z, [x,z] - x, [y,z] +
y, x^p, y^p,z^p - z\rangle$ for $p>2$. 
\item[C16]
  $C(\lambda,\delta) = k\langle x, y,z\rangle/\langle [x,y], [x,z] -
  \lambda x, [y,z] - \lambda^{-1}y, x^p, y^p, z^p - \delta z\rangle$
  for some $\lambda\in k^\times$ such that
  $\delta = \lambda^{p-1} = \pm 1$.  Any two
  $C(\lambda_1,\delta_1)$ and $C(\lambda_2,\delta_2)$ are isomorphic
  as Hopf algebras if and only if $\delta_1=\delta_2$ and $\lambda_1=\lambda_2$ or
  $\lambda_1\lambda_2=1$.
\end{description}
\end{thm}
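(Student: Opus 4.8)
The plan is to run the standard classification machine for connected Hopf algebras in characteristic $p$. Let $k = C_0 \subseteq C_1 \subseteq C_2 \subseteq \cdots$ be the coradical filtration of the connected Hopf algebra $H$ of dimension $p^3$; the associated graded $\gr H$ is then a graded connected Hopf algebra of the same dimension, so the problem splits into (i) determining the possible $\gr H$, and (ii) for each, determining all filtered Hopf algebras deforming it. Everything is organized by the invariant $r = \dim_k P(H)$. Since $\Delta(u^p) = u^p \otimes 1 + 1 \otimes u^p$ in characteristic $p$ for any primitive $u$, the space $P(H)$ is a restricted Lie subalgebra of $H$; as $H$ is free as a module over the sub-Hopf-algebra $u(P(H))$, we get $p^r \mid p^3$, so $r \in \{1,2,3\}$, and $r = 3$ forces $H = u(P(H))$.

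Thus the case $r = 3$ reduces to classifying $3$-dimensional restricted Lie algebras $(\mathfrak g,[-,-],(-)^{[p]})$ over an algebraically closed field of characteristic $p$ up to restricted isomorphism: list the $3$-dimensional Lie algebras (abelian; Heisenberg; the non-nilpotent solvable ones; and, for $p > 2$, $\mathfrak{sl}_2$), then on each classify the compatible $p$-semilinear operations $u \mapsto u^{[p]}$, and finally read off normalized presentations. This already accounts for most of the list --- A2--A4 together with the B- and C-type algebras whose comultiplication carries no correction term --- and it is here that the prime $p = 2$ intervenes, since some of these Lie algebras and some of these $p$-operations exist only for $p > 2$; this is why A5, B3, C6 and C15 take a special form or disappear at $p = 2$.

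For $r \le 2$ the Hopf algebra is not primitively generated, so there are extra generators (one if $r = 2$, two if $r = 1$) lying in $C_1/C_0$. Writing $\Delta(t) = t \otimes 1 + 1 \otimes t + T$ for such a generator, coassociativity forces $T$ to be a normalized Hochschild $2$-cocycle on $u(P(H))$ with values in the trivial bimodule, and the admissible classes are governed by the low-degree Hochschild cohomology of $u(P(H))$. The distinguished cocycle is the ``divided $p$-th power'' class: for a single primitive $t$ it is represented by exactly the element $\omega(t)$ defined before the theorem --- the reduction mod $p$ of $\frac{1}{p}\left[(t \otimes 1 + 1 \otimes t)^p - t^p \otimes 1 - 1 \otimes t^p\right]$ --- which is what permits a $p$-th power to be promoted to a new skew-primitive generator. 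Imposing the remaining commutator and $p$-power relations among $x, y, z$, subject to compatibility with $\Delta$, then cuts the possibilities down to the finitely many families A1 and A5 ($r = 1$) and B1--B3 ($r = 2$).

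The final step is to collapse the list to isomorphism classes of Hopf algebras. Almost every entry is rigid; the exceptions are $A(\beta)$ and $C(\lambda,\delta)$, for which one computes the relevant automorphism group (of the underlying graded Hopf algebra together with its defining data), lets it act on the deformation parameter, and reads off the stated equivalences: $\beta' = \gamma\beta$ for a $(p^2+p-1)$-th root of unity $\gamma$, and $\delta_1 = \delta_2$ with $\lambda_1 \in \{\lambda_2, \lambda_2^{-1}\}$. I expect the main obstacle to be \emph{completeness} rather than verification: checking that each listed algebra is a connected Hopf algebra of dimension $p^3$ is a direct, if lengthy, computation, whereas proving that nothing else occurs requires the full case-by-case cohomological analysis of every possible $\gr H$ and every deforming cocycle, along with the separate treatment of $p = 2$. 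This is carried out in \cite{NWW}.
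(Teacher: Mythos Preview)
The paper does not prove this theorem; it simply quotes the classification from \cite{NWW} and uses it as input for the rest of the work. Your proposal is therefore not competing against any argument in the paper, and since you also defer the actual case analysis to \cite{NWW}, the two are aligned: both treat the theorem as an imported result. The outline you sketch --- organize by $r=\dim_k P(H)\in\{1,2,3\}$, identify the $r=3$ case with the classification of $3$-dimensional restricted Lie algebras, and for $r\le 2$ lift along the coradical filtration using the divided-power cocycle $\omega$ --- is indeed the shape of the argument in \cite{NWW}.

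One slip worth flagging: you write that the $r=3$ analysis ``accounts for A2--A4 together with the B- and C-type algebras whose comultiplication carries no correction term''. By the Remark immediately following the theorem, the labels \textbf{A}/\textbf{B}/\textbf{C} record exactly $\dim_k P(H)=1,2,3$, so only the \textbf{C}-type algebras are restricted enveloping algebras; \textbf{A2}--\textbf{A4} belong to the $r=1$ case and \textbf{B1}--\textbf{B3} to $r=2$. The identifications \textbf{A2}$=$\textbf{C2}, \textbf{A3}$=$\textbf{C4}, \textbf{A4}$=$\textbf{C3} made later in the paper are isomorphisms of \emph{algebras}, not of Hopf algebras, which is precisely why the list contains both.
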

\begin{remark}
  1) The antipode of the above Hopf algebras always exists, because
  the coalgebra structure is connected.

2) Let $H$ be a connected Hopf algebra of dimension $p^3$ and
    denote by 
    \[P(H) =\{h\in H\mid\Delta(h)=h\otimes 1+h\otimes 1\},\] 
    the \emph{primitive space of $H$}.  The isomorphism classes of $H$
    given above satisfy the following conditions:
    \begin{enumerate}[(\textbf{\Alph*})] 
      \item $\dim_k P(H) = 1$.
      \item $\dim_k P(H) = 2$ with non-commuting elements.
      \item $\dim_k P(H) = 3$.
   \end{enumerate}

   3) Hopf algebras of type $\textbf{C}$ are exactly the restricted
   universal enveloping algebras of restricted Lie algebras of
   dimension $3$. It includes all the $p$-nilpotent restricted Lie
   algebras of dimension $3$ classified in \cite[Theorem 2.1
   (3/1),(3/2)]{SU}, i.e., (up to isomorphisms) (3/1) (a) is
   \textbf{C4}, (3/1) (b) is \textbf{C3}, (3/1) (c) is \textbf{C2};
   (3/2) (a) is \textbf{C5} ($p\ge 3$) and \textbf{C10} ($p=2$), and (3/2) (b) is
   \textbf{C6}.
\end{remark}

\subsection{Antipode and Nakayama automorphism}
In this subsection we review some known results on the antipode and
the Nakayama automorphism of finite dimensional Hopf algebras in
general, and then apply them to the algebras in Theorem
\ref{thm:classification}.
 
Let $H=(H,m,u,\Delta,\epsilon,S)$ be any finite-dimensional Hopf
algebra over an arbitrary base field $k$.  A \emph{left integral in
  $H$} is an element of $H$, usually denoted by $\L$, such that
$h\Lambda=\epsilon(h)\Lambda$ for all $h\in H$; and a \emph{right
  integral in $H$} is an element $\Lambda'\in H$ such that
$\Lambda'h=\epsilon(h)\Lambda'$ for all $h\in H$. The space of left
integrals and the space of right integrals are denoted by $\int^l_H$
and $\int^r_H$, respectively. A well-known result of Larson and
Sweedler shows that $\dim \int^l_H=\dim \int^r_H=1$. We say $H$ is
\emph{unimodular} if $\int^l_H=\int^r_H$.

For any algebra map $\alpha\colon H\to k$, we can define the left
winding automorphism of $H$ associated to $\alpha$ as
$\Xi^l[\alpha](h)=\sum \alpha(h_1)h_2$ for any $h\in H$. Similarly,
the right winding automorphism of $H$ associated to $\alpha$ can be
defined as $\Xi^r[\alpha](h)=\sum h_1\alpha(h_2)$ for any $h\in
H$. Note that $\int^l_H\subset H$ is a right $H$-submodule of
$H$. Since it is one-dimensional, the right $H$-module structure gives
an algebra map $\alpha\colon H\to k$ such that
$\Lambda h=\alpha(h)\Lambda$ for $0\neq \Lambda\in \int^l_H$ and
$h\in H$. We will use the term left winding automorphism of $\int^l_H$
instead of $\alpha$.

The following results are due to Larson-Sweedler \cite{LS},
Pareigis \cite{Pare}, and Brown-Zhang \cite{BZ}.

\begin{thm}\label{thm:nakauto}
Let $H$ be any finite-dimensional Hopf algebra. Then 
\begin{enumerate}[\rm(a)]
\item $H$ is Frobenius with a nondegenerate associated bilinear form
  $\langle-,-\rangle\colon H\otimes H\to k$ given by
  $\langle a,b\rangle=\lambda(ab)$, where
  $0\neq \lambda\in \int^l_{H^*}$ and $a,b\in H$.
\item The Nakayama automorphism is given by $S^2\xi$, where $\xi$ is
  the left winding automorphism of $\int^l_H$.
\item $H$ is symmetric if and only if $H$ is unimodular and $S^2$ is
  inner.
\end{enumerate}
\end{thm}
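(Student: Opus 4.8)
The plan is to establish the three statements in the order given, with (a) producing the Frobenius form that (b) and (c) then analyse. Throughout, $\epsilon$ denotes the counit, $\Lambda$ a fixed nonzero element of $\int^l_H$, $\alpha\colon H\to k$ the left winding automorphism data of $\int^l_H$ (so $\Lambda h=\alpha(h)\Lambda$), and $\xi=\Xi^l[\alpha]$.

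For (a), bilinearity of $\langle a,b\rangle=\lambda(ab)$ and the identity $\langle ab,c\rangle=\lambda(abc)=\langle a,bc\rangle$ are immediate, so the only issue is nondegeneracy, i.e.\ that $a\mapsto\lambda(a\cdot-)$ is injective, hence bijective since $\dim_k H=\dim_k H^*$. This is the theorem of Larson--Sweedler \cite{LS}, which I would deduce from the Fundamental Theorem of Hopf Modules: equip $H$ with the left $H^*$-module structure $f\rightharpoonup h=\sum h_1f(h_2)$, which together with $\Delta$ makes $H$ a Hopf module over $H^*$, so $H\cong H^*\otimes_k H^{\operatorname{co}H^*}$, and $H^{\operatorname{co}H^*}$ is one-dimensional (spanned by a nonzero left integral of $H$). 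Thus $H\cong H^*$ as left $H^*$-modules, and tracing a generator through the isomorphism identifies it with $\lambda$; nondegeneracy of $\langle-,-\rangle$ falls out.

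For (b), the Nakayama automorphism $\nu$ attached to the form of (a) is characterised by $\lambda(ab)=\lambda(b\,\nu(a))$ for all $a,b$, so the task is to show one may take $\nu=S^2\xi$. I would normalise $\lambda(\Lambda)=1$ and use the dual-basis identities $h=\sum\Lambda_1\,\lambda(S(\Lambda_2)h)=\sum\lambda(h\Lambda_1)\,S(\Lambda_2)$, the bijectivity of the antipode (automatic in finite dimensions), and the relation $\Lambda h=\alpha(h)\Lambda$. Substituting these into $\lambda(ab)$ and collecting Sweedler indices rewrites it as $\lambda\bigl(b\,S^2(\xi(a))\bigr)$; this is precisely the integral computation underlying Radford's formula for $S^4$, and I would either carry it out directly or quote it from \cite{BZ}. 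I expect this bookkeeping to be the main obstacle: tracking exactly where $S$, $S^{-1}$ and the winding automorphism enter, and verifying the auxiliary identities $\epsilon\circ\xi=\alpha$ and $\alpha\circ S^2=\alpha$, is where all the content lies.

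For (c), I would combine (b) with the standard fact that a Frobenius algebra is symmetric if and only if its Nakayama automorphism is inner: replacing $\lambda$ by $\lambda(u\cdot-)$ for a unit $u$ replaces $\nu$ by $\operatorname{inn}_{\nu(u)}\circ\nu$, and these are all the Frobenius forms, so a symmetric one exists precisely when $\nu\in\operatorname{Inn}(H)$. Next, $H$ is unimodular $\iff\int^l_H=\int^r_H\iff\alpha=\epsilon\iff\xi=\Xi^l[\epsilon]=\id$; hence if $H$ is unimodular and $S^2$ is inner then $\nu=S^2$ is inner and $H$ is symmetric. Conversely, if $H$ is symmetric then $\nu=S^2\xi=\operatorname{inn}_u$ for some unit $u$, and $h\Lambda=\epsilon(h)\Lambda$, $\Lambda h=\alpha(h)\Lambda$ give $\nu(\Lambda)=u\Lambda u^{-1}=\epsilon(u)\alpha(u)^{-1}\Lambda$, a nonzero multiple of $\Lambda$; on the other hand $S$ interchanges $\int^l_H$ and $\int^r_H$, and $\epsilon\circ\xi^{-1}=\alpha^{-1}$ together with $\alpha\circ S^2=\alpha$ yields $h\,\nu(\Lambda)=\alpha^{-1}(h)\,\nu(\Lambda)$. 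Comparing the two descriptions of $h\,\nu(\Lambda)$ forces $\alpha=\epsilon$, so $H$ is unimodular, whence $\xi=\id$ and $S^2=\nu$ is inner. The delicate point is exactly this last comparison: it is where the interplay of the antipode with the integrals prevents the two factors $S^2$ and $\xi$ of an inner automorphism from mixing.
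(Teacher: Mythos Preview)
The paper does not prove this theorem at all: it simply attributes the three statements to Larson--Sweedler, Pareigis, and Brown--Zhang and moves on. So there is no ``paper's own proof'' to match, and your outline goes well beyond what the authors supply.

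That said, a comment on your sketch. Parts (a) and (b) follow the standard route (Fundamental Theorem of Hopf modules for nondegeneracy; Radford-style integral identities for the Nakayama automorphism), and as outlines they are fine. In part (c), however, the converse argument has a gap. Having shown that $\nu(\Lambda)=u\Lambda u^{-1}$ is a nonzero scalar multiple of $\Lambda$, you already know $h\,\nu(\Lambda)=\epsilon(h)\,\nu(\Lambda)$, since $\Lambda$ is a left integral. Your ``independent'' claim $h\,\nu(\Lambda)=\alpha^{-1}(h)\,\nu(\Lambda)$ is asserted from the ingredients ``$S$ interchanges $\int^l_H$ and $\int^r_H$'', ``$\epsilon\circ\xi^{-1}=\alpha^{-1}$'', and ``$\alpha\circ S^2=\alpha$'', but none of these, separately or together, gives that identity for $\nu(\Lambda)=S^2\xi(\Lambda)$ without a further computation of what $\xi(\Lambda)$ actually is (it involves $\sum\alpha(\Lambda_1)\Lambda_2$, and the behaviour of this under left multiplication is not addressed). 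The cleaner route, used for instance in the references the paper cites, is to pass to the counit: apply $\epsilon$ to $\nu=S^2\xi$ and use that inner automorphisms fix $\epsilon$-values on central or integral elements, or argue via the distinguished group-like of $H^*$ directly. As written, your ``delicate point'' is not just delicate but unproved.
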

This has the following consequence for finite dimensional involutory
Hopf algebras.  Recall that a Hopf algebra $H$ with antipode $S$ is
\emph{involutory} if $S^2 = 1$.

\begin{cor}\label{cor:nakauto}
The following finite-dimensional involutory Hopf algebras are symmetric:
\begin{enumerate}[\rm(a)]
\item commutative algebras,
\item group algebras,
\item local algebras,
\item semisimple algebras.
\end{enumerate}
\end{cor}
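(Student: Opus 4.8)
The plan is to deduce the corollary directly from Theorem \ref{thm:nakauto}(c): an involutory Hopf algebra $H$ is symmetric precisely when it is unimodular and $S^2$ is inner. Since $S^2 = 1$ by hypothesis, $S^2$ is certainly inner (conjugation by the identity). So in every case the only thing to check is that $H$ is unimodular, i.e.\ that $\int_H^l = \int_H^r$, equivalently that the left winding automorphism $\alpha$ of $\int_H^l$ is the counit $\epsilon$. By Theorem \ref{thm:nakauto}(b) the Nakayama automorphism is $S^2\xi = \xi$, so unimodularity is also equivalent to the Nakayama automorphism being trivial; either formulation will do.

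I would then dispatch the four cases. For (a), if $H$ is commutative then every winding automorphism is trivial on the grounds that $\Xi^l[\alpha](h) = \sum \alpha(h_1)h_2$ and one checks $\alpha$ must equal $\epsilon$ because a one-dimensional ideal $\int_H^l$ in a commutative algebra is spanned by an idempotent-like integral; more cleanly, in the commutative case the Nakayama automorphism of any Frobenius algebra is the identity, hence by (b) $\xi = \id$ and $H$ is unimodular, and then (c) applies. For (b), a group algebra $kG$ has two-sided integral $\sum_{g\in G} g$, so it is unimodular, and $S^2 = 1$ automatically for group algebras, so (c) again applies (this case is even subsumed by noting $kG$ is well known to be symmetric via the trace form, but I would phrase it through Theorem \ref{thm:nakauto}). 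For (c), a local finite-dimensional algebra has no nontrivial idempotents, so every algebra automorphism — in particular the Nakayama automorphism — fixes the unique maximal ideal and acts as the identity on $H/\rad H \cong k$; combined with the fact that for a local Frobenius algebra the Nakayama automorphism is inner (all automorphisms differ from inner ones only by an outer part detected on the center/blocks, which is trivial here), one gets that $H$ is symmetric. Alternatively and more robustly: for local $H$, $\int_H^l$ is contained in the socle, which is one-dimensional and a two-sided ideal, forcing $\int_H^l = \int_H^r$, so $H$ is unimodular and (c) finishes it. For (d), a semisimple Hopf algebra is unimodular (by Maschke-type arguments, or because $S^2 = 1$ in the semisimple case by Larson–Radford, and a semisimple algebra is automatically symmetric), so (c) applies; but since we already assume $S^2 = 1$ we only need unimodularity, which for semisimple $H$ holds because the integral is a central idempotent multiple and the two integral spaces coincide.

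The main obstacle is not any single case but assembling the right elementary reason for unimodularity in each: the cleanest uniform thread is that in each of the four families the space of left integrals turns out to be a two-sided ideal (because $\int_H^l$ lands in the socle, which is a two-sided ideal of dimension one in a Frobenius algebra that is commutative, local, or semisimple, and equals $k\cdot\sum_g g$ for group algebras), whence $\int_H^l = \int_H^r$. Once unimodularity is in hand, the hypothesis $S^2 = 1$ makes $S^2$ trivially inner, and Theorem \ref{thm:nakauto}(c) yields that $H$ is symmetric. I would therefore structure the proof as: (1) reduce to checking unimodularity using $S^2 = 1$ and Theorem \ref{thm:nakauto}(c); (2) verify $\int_H^l = \int_H^r$ in each of the four cases by the socle/ideal argument above; (3) conclude. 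The only place requiring a word of care is the semisimple case if one does not want to invoke $S^2 = 1$ as a hypothesis there — but we do assume it — so the argument stays short.
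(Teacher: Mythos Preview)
Your overall approach matches the paper's: reduce via Theorem~\ref{thm:nakauto}(c) to checking unimodularity, then verify unimodularity in each case. The paper's proof is terse (it simply declares (a) and (b) ``clear'' and asserts that local or semisimple Hopf algebras are unimodular), so your expanded version is in the same spirit.

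Two rough spots to clean up. First, your initial argument for (c) contains a false general claim: it is \emph{not} true that the Nakayama automorphism of a local Frobenius algebra is inner --- there are local Frobenius algebras that are not symmetric. You rightly back away from this and give the socle argument, which is correct: in a local finite-dimensional Hopf algebra the augmentation ideal is the radical, so $\int_H^l$ lies in the (one-dimensional, two-sided) socle, and likewise for $\int_H^r$, forcing equality. Drop the first attempt entirely. Second, your proposed ``uniform thread'' via a one-dimensional socle does not cover case (d): in a semisimple algebra the socle is the whole algebra, not one-dimensional. For (d) you should instead use either that semisimple algebras are symmetric directly (products of matrix algebras), or the standard fact that a semisimple Hopf algebra is unimodular. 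With these two fixes the argument is correct and aligned with the paper's.
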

\begin{proof} 
  Parts (a) and (b) are clear.  Parts (c) and (d) hold because any local or semisimple
  Hopf algebra is unimodular.
\end{proof}

The next result for restricted Lie algebras is from \cite{Sch, Hum}.
\begin{thm}\label{thm:restricteL}
  Let $\mathfrak g$ be a finite dimensional restricted Lie algebra
  over $k$ of characteristic $p>0$. Then the restricted enveloping
  algebra $u(\mathfrak g)$ is Frobenius with Nakayama automrophism
  $\sigma$ given by $\sigma(x)=x+\Tr(\ad x)$ for all
  $x\in \mathfrak g$. As a consequence, $\sigma^p=1$ and
  $u(\mathfrak g)$ is symmetric if and only if
  $\Tr(\ad x)=0$ for all $x\in \mathfrak g$.
\end{thm}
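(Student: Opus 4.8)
The plan is to deduce everything from Theorem~\ref{thm:nakauto} applied to the finite-dimensional Hopf algebra $u(\mathfrak g)$: it is the Hopf quotient of $U(\mathfrak g)$ by the Hopf ideal generated by the elements $x^p-x^{[p]}$, of dimension $p^{\dim\mathfrak g}$, with $\mathfrak g$ embedded as primitive elements. Thus $u(\mathfrak g)$ is Frobenius by part~(a), and by part~(b) its Nakayama automorphism is $S^2\xi$, where $\xi=\Xi^l[\alpha]$ is the left winding automorphism associated to the character $\alpha\colon u(\mathfrak g)\to k$ determined by $\Lambda h=\alpha(h)\Lambda$ for a nonzero left integral $\Lambda$. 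Since each generator $x\in\mathfrak g$ is primitive we have $S(x)=-x$, so $S^2$ fixes $\mathfrak g$ and hence $S^2=\id$; therefore the Nakayama automorphism is exactly $\xi$, and on a primitive element $\xi(x)=\sum\alpha(x_1)x_2=\alpha(x)+x$. So the core of the theorem is the single identity $\alpha(x)=\Tr(\ad x)$ for $x\in\mathfrak g$.

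To prove that identity, first observe that $\int^l_{u(\mathfrak g)}=\{h\mid xh=0\ \text{for all}\ x\in\mathfrak g\}$ (the generators suffice because $\epsilon$ annihilates $\mathfrak g$) and that this line $k\Lambda$ is a two-sided ideal, whence $\ad x(\Lambda)=x\Lambda-\Lambda x=-\Lambda x=-\alpha(x)\Lambda$; it therefore suffices to show $\ad x(\Lambda)=-\Tr(\ad x)\,\Lambda$. I would now pass to the PBW filtration $F_\bullet$ attached to an ordered basis $x_1,\dots,x_n$ of $\mathfrak g$, with commutative associated graded $A=\gr u(\mathfrak g)\cong k[\bar x_1,\dots,\bar x_n]/(\bar x_i^{\,p})$. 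The relations $x_i\Lambda=0$ force the leading symbol of $\Lambda$ in $A$ to be killed by every $\bar x_i$, and the only such elements of $A$ are the scalar multiples of $\omega:=\bar x_1^{\,p-1}\cdots\bar x_n^{\,p-1}$ (the socle of the local algebra $A$); hence $\bar\Lambda=c\,\omega$ with $c\neq0$, living in top degree $n(p-1)$. The adjoint action of $x_i$ preserves $F_\bullet$ and induces on $A$ the Poisson derivation $D_i$ with $D_i(\bar x_j)=\overline{[x_i,x_j]}$, so passing to symbols in the displayed identity yields $D_i(\omega)=-\alpha(x_i)\,\omega$.

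Finally $D_i(\omega)$ is an immediate Leibniz-rule computation: writing $[x_i,x_j]=\sum_l\gamma^l_{ij}x_l$ and using $(p-1)\bar x_j^{\,p-2}\bar x_j=-\bar x_j^{\,p-1}$ together with $\bar x_j^{\,p-1}\bar x_l=0$ for $l\neq j$, only the diagonal terms survive and $D_i(\omega)=-\bigl(\sum_j\gamma^j_{ij}\bigr)\omega=-\Tr(\ad x_i)\,\omega$. Hence $\alpha(x_i)=\Tr(\ad x_i)$ and $\sigma(x)=x+\Tr(\ad x)$. The two consequences then follow formally: $\sigma^m(x)=x+m\Tr(\ad x)$ for all $m$, so $\sigma^p$ fixes every generator and $\sigma^p=1$; and by Theorem~\ref{thm:nakauto}(c), since $S^2=\id$ is inner, $u(\mathfrak g)$ is symmetric iff it is unimodular, iff the left integral is also a right integral, iff $\alpha=\epsilon$, iff $\Tr(\ad x)=0$ for every $x\in\mathfrak g$.

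I expect the only real friction to be the filtration bookkeeping in the middle step — checking that $\int^l$ has a well-defined, nonzero leading symbol located exactly in the socle of $A$, and that $\ad x$ really descends to the Poisson derivation $D_i$ — and ensuring these work uniformly in $p$, including $p=2$ where crude degree counting is tighter (the ``annihilator in $A$'' characterization of $\omega$ sidesteps this). A viable alternative is induction on $\dim\mathfrak g$, realizing $u(\mathfrak g)$ as a restricted Ore-type extension of $u(\mathfrak h)$ for a codimension-one ideal $\mathfrak h$ and tracking how integrals and Nakayama automorphisms behave under such extensions, but making the $x^{[p]}$-relation rigorous there looks no simpler.
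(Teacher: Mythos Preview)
The paper does not prove this theorem: it simply attributes the result to Schue and Humphreys and states it. Your argument is correct and supplies a complete proof, resting only on the paper's Theorem~\ref{thm:nakauto}. The reduction to $\sigma=\xi$ via $S^2=\id$ on primitives is immediate, and your PBW/associated-graded computation of the distinguished character $\alpha(x_i)=\Tr(\ad x_i)$ through the socle element $\omega\in\gr u(\mathfrak g)$ is essentially the classical route. The bookkeeping you flag is routine: $\bar\Lambda$ is nonzero because $\Lambda\neq 0$ and lands in the one-dimensional socle $k\omega$ of $A$ because each $\bar x_i$ kills it; and $\ad x_i$ preserves each $F_d$ since $[x_i,y_1\cdots y_d]=\sum_j y_1\cdots[x_i,y_j]\cdots y_d\in F_d$, with induced map on $F_d/F_{d-1}$ visibly the derivation $D_i$. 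One small wording slip in the Leibniz step: the vanishing for $l\neq j$ is not literally ``$\bar x_j^{\,p-1}\bar x_l=0$'' but rather that the remaining factor $\prod_{k\neq j}\bar x_k^{\,p-1}$ already contains $\bar x_l^{\,p-1}$, so the extra $\bar x_l$ yields $\bar x_l^{\,p}=0$; the conclusion is unaffected.
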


A direct calculation by using the antipode axiom 
\[m(S\otimes 1)\Delta=m(1\otimes S)\Delta=u\epsilon\] 
yields that the antipode of Hopf algebras for all \textbf{A},
\textbf{B}, \textbf{C} types is given by 
\[S(x)=-x,\qquad S(y)=-y,\qquad S(z)=-z\]
except for \textbf{B3} whose antipode is given by
\[S(x)=-x,\qquad S(y)=-y,\qquad S(z)=-z-2xy.\]
Using this we have the following.
\begin{prop}
  For Hopf algebras listed in \textup{Theorem
    \ref{thm:classification}}, the following are true.
\begin{enumerate}[\rm(a)]
\item All Hopf algebras are involutory except for \textup{\textbf{B3}}
  where $S^{2p}=1$.
\item Hopf algebras of type \textup{\textbf{A}} are unimodular and
  symmetric.
\item The Hopf algebras of type \textup{\textbf{B1}} and
  \textup{\textbf{B3}} are not symmetric.
\item The Hopf algebras \textup{\textbf{C1}}--\textup{\textbf{C10}} and
  \textup{\textbf{C15}} are symmetric,
  \textup{\textbf{C11}}--\textup{\textbf{C14}} are not symmetric, and
  \textup{\textbf{C16}} is symmetric if and only if $\lambda^2=-1$.
\end{enumerate}
\end{prop}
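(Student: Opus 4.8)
The plan is to treat the four parts in turn, using Theorems~\ref{thm:nakauto} and~\ref{thm:restricteL}, Corollary~\ref{cor:nakauto}, and part~3) of the remark following Theorem~\ref{thm:classification}. Part~(a) is immediate from the antipode formulas recorded just above the proposition. Since $S$ reverses products and $S(x)=-x$, $S(y)=-y$, $S(z)=-z$ in every case other than \textbf{B3}, we have $S^2(x)=x$, $S^2(y)=y$, $S^2(z)=z$, so $S^2=\id$ because $x,y,z$ generate the algebra. For \textbf{B3}, where $S(z)=-z-2xy$, one computes $S^2(z)=-S(z)-2S(y)S(x)=z+2xy-2yx=z+2[x,y]=z+2y$ using $[x,y]=y$, while $S^2(x)=x$ and $S^2(y)=y$; hence $S^{2n}(z)=z+2ny$, so $S^2\neq\id$ but $S^{2p}=\id$ (recall $p$ is odd for \textbf{B3}).

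For part~(d) I would invoke part~3) of the remark, which says that each type~\textbf{C} algebra is the restricted enveloping algebra $u(\mathfrak g)$ of a $3$-dimensional restricted Lie algebra $\mathfrak g$; by Theorem~\ref{thm:restricteL} this algebra is symmetric if and only if $\Tr(\ad v)=0$ for all $v\in\mathfrak g$. As $v\mapsto\Tr(\ad v)$ is linear it suffices to test the basis $x,y,z$, and reading the bracket off each presentation is a short calculation. For \textbf{C1}--\textbf{C10} and \textbf{C15} all three traces vanish: \textbf{C1}--\textbf{C4} and \textbf{C7}--\textbf{C9} are commutative; in the Heisenberg-type cases \textbf{C5}, \textbf{C6}, \textbf{C10} every $\ad v$ is nilpotent; and in \textbf{C15} one has $\mathfrak g=[\mathfrak g,\mathfrak g]$, so every $\ad v$ is a sum of commutators and hence traceless. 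For \textbf{C11}--\textbf{C14}, where $[x,y]=y$ and $[x,z]=[y,z]=0$, one finds $\Tr(\ad x)=1\neq0$. For \textbf{C16}$=C(\lambda,\delta)$, where $[x,y]=0$, $[x,z]=\lambda x$, $[y,z]=\lambda^{-1}y$, only $\ad z$ contributes and $\Tr(\ad z)=-\lambda-\lambda^{-1}$, which vanishes precisely when $\lambda^2=-1$.

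For part~(b), the algebras \textbf{A1}--\textbf{A4} are commutative by inspection, while in \textbf{A5} (both for $p=2$ and for $p>2$) the element $x$ is central and nilpotent, so $(x)$ is a nilpotent ideal and the quotient by it is $k[y,z]/(y^p,z^p)$, which is local; hence \textbf{A5} is local. All type~\textbf{A} algebras are involutory by part~(a), so Corollary~\ref{cor:nakauto}(a),(c) shows they are symmetric, and a commutative or local Hopf algebra is unimodular (the local case as in the proof of Corollary~\ref{cor:nakauto}; the commutative case because the one-dimensional space of left integrals is then a two-sided ideal on which left and right multiplication both act through $\epsilon$). For the first half of part~(c), note that \textbf{B1} coincides, as an algebra, with \textbf{C11}: both are $k\langle x,y,z\rangle/\langle xy-yx-y,\,xz-zx,\,yz-zy,\,x^p-x,\,y^p,\,z^p\rangle$, which is not symmetric by part~(d).

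The remaining case \textbf{B3} is the main obstacle, since it is neither a restricted enveloping algebra nor commutative or local, so neither Theorem~\ref{thm:restricteL} nor Corollary~\ref{cor:nakauto} applies; instead I would show \textbf{B3} is not unimodular and conclude by Theorem~\ref{thm:nakauto}(c). This requires first pinning down enough of its algebra structure: \textbf{B3} has a PBW basis $\{x^ay^bz^c:0\le a,b,c\le p-1\}$, it contains the subalgebra $k[x]/(x^p-x)$ with its orthogonal idempotents $e_i$ ($i\in\mathbb F_p$, $xe_i=ie_i$), and for a monomial $m$ in $y,z$ one has $xm=m(x+\deg m)$ (from $[x,y]=y$, $[x,z]=z$) and $zy^{p-1}=y^{p-1}z$ (from $[y,z]=y^2$ and $y^p=0$). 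With these identities one checks that $\Lambda=y^{p-1}z^{p-1}e_2$ is nonzero and satisfies $x\Lambda=y\Lambda=z\Lambda=0$, so it spans the space of left integrals of \textbf{B3}; but $\Lambda x=y^{p-1}z^{p-1}(e_2x)=2\Lambda$, so the character governing the right action on this space is not $\epsilon$, i.e.\ \textbf{B3} is not unimodular, hence not symmetric. Apart from setting up this last computation, the proof is just the cited results together with short trace and bracket calculations.
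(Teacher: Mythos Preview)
Your proof is correct. Parts~(a), (b), (d), and the \textbf{B1} half of~(c) match the paper's argument essentially line for line. The only point of divergence is \textbf{B3} in~(c): the paper appeals to the other half of Theorem~\ref{thm:nakauto}(c), asserting that $S^2$ (which sends $z\mapsto z+2y$) is not inner, whereas you instead exhibit the left integral $\Lambda=y^{p-1}z^{p-1}e_2$ and check $\Lambda x=2\Lambda\neq\epsilon(x)\Lambda$, concluding non-unimodularity. Both routes are legitimate applications of Theorem~\ref{thm:nakauto}(c); your argument has the advantage of being fully self-contained, since the paper does not justify why the automorphism $z\mapsto z+2y$ fails to be inner, while your integral computation (using $zy^{p-1}=y^{p-1}z$ and the weight shift $m\,e_j=e_{j+\deg m}\,m$) settles the matter explicitly.
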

\begin{proof}
  (a) is clear since for \textbf{B3}, we have $S^{2n}(z)=z+2ny$ for
  all $n\ge 1$. 

  (b) The algebras \textbf{A1}--\textbf{A4} are commutative. One checks easily that
   \textbf{A5} is local. Then it follows from Corollary \ref{cor:nakauto}(c).

  (c) \textbf{B1} and \textbf{C11} are isomorphic as augmented algebras so
  it follows from (d) later. For \textbf{B3}, one sees that
  $S^2(z)=z+xy$ is not inner.

  (d) This follows from Theorem \ref{thm:restricteL}.
\end{proof}

\begin{remark}
  From a Hopf algebra view point it is not known whether \textbf{B2}
  is symmetric or not because it is hard to compute its left or right
  integral and to check whether it is unimodular.  However we shall
  show that through finding another presentation of algebras of type
  \textbf{B2} none of them are symmetric (See Proposition
  \ref{prop:selfinjNaka} (a)). 
\end{remark}

\subsection{Algebra structure}
The cohomology ring of a finite dimensional Hopf algebra only depends
on the algebra structure, not the Hopf structure.  So in order to
determine the structure of the cohomology ring it is enough to study
the algebra structure.  We will do this in the following six sections,
where we show that the algebras in this classification falls into $6$
different classes, (0) semisimple algebras, (1) group algebras
(tensored or direct sum with a semisimple algebra), (2) (direct sums
of) selfinjective Nakayama algebras, (3) enveloping algebra of
restricted Lie algebras, (4) coverings of local algebras, (5) other
local algebras.  We will show that
\begin{center}
\begin{tabular}{|l|l|}\hline
  Semisimple algebras & \textbf{A1} = \textbf{C1} \\ \hline
  Group algebras &  \parbox{6cm}{\textbf{A2} = \textbf{C2}, \textbf{A3} =
                   \textbf{C4}, \textbf{A4} = \textbf{C3}, 
                   \textbf{C7}, \textbf{C8}, \textbf{C9}, \textbf{C10}}
  \\ \hline
Selfinjective Nakayama algebras & \textbf{B2}, \textbf{C12}, \textbf{C13},
                      \textbf{C14}\\ \hline
  \parbox{4cm}{Enveloping algebra of restricted Lie algebras} & \textbf{C5},
                                                  \textbf{C6},
                                                  \textbf{C15}\\ \hline
Coverings of local algebras & \textbf{B1} = \textbf{C11}, \textbf{B3}, \textbf{C16}\\\hline
  Other local algebras  &  \textbf{A5}\\ \hline    
\end{tabular}
\end{center}
Note that here we focus on the algebra structure (and do not consider  the
comultiplication).

\section{Semisimple algebras}

In this section we classify the semisimple algebras occurring among
the algebras in the classification of the finite dimensional connected
Hopf algebras of dimension $p^3$.

We show the following. 
\begin{prop} Let $k$ be a field of characteristic $p$.  The algebras of
  type \textup{\textbf{A1}} and \textup{\textbf{C1}} are equal, and they are isomorphic to
  $k^{p^3}$.
\end{prop}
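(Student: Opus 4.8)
The plan is to show first that \textbf{A1} and \textbf{C1} present the same algebra, and then that this algebra is semisimple and split over $k$, hence isomorphic to $k^{p^3}$ by Wedderburn. The equality of the two algebras is immediate on inspection: both are presented as $k[x,y,z]/\langle x^p-x,\,y^p-y,\,z^p-z\rangle$, with the only difference being the bracket notation for the ideal; the comultiplication data distinguishing \textbf{A1} from \textbf{C1} as Hopf algebras is irrelevant here since we only care about the algebra structure. So it suffices to analyse $\L:=k[x,y,z]/\langle x^p-x,\,y^p-y,\,z^p-z\rangle$.

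Next I would observe that $\L$ is a tensor product of three copies of $k[t]/\langle t^p-t\rangle$ (one in each of $x$, $y$, $z$), since the three generators commute and the relations separate. Thus it is enough to show $A:=k[t]/\langle t^p-t\rangle\cong k^p$, and then $\L\cong A\otimes_k A\otimes_k A\cong k^p\otimes_k k^p\otimes_k k^p\cong k^{p^3}$. For $A$: over the prime field we have the factorization $t^p-t=\prod_{a\in\mathbb{F}_p}(t-a)$ in $\mathbb{F}_p[t]$, which also holds in $k[t]$ since $\kar k=p$ forces $\mathbb{F}_p\subseteq k$; the $p$ factors $t-a$ are pairwise coprime (their pairwise differences $a-a'$ are nonzero scalars), so the Chinese Remainder Theorem gives $A=k[t]/\langle\prod_{a\in\mathbb{F}_p}(t-a)\rangle\cong\prod_{a\in\mathbb{F}_p}k[t]/\langle t-a\rangle\cong k^p$. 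An alternative, if one prefers an intrinsic argument, is to note that $t^p-t$ is separable (its derivative $pt^{p-1}-1=-1$ is a unit, so $\gcd(t^p-t,(t^p-t)')=1$), hence $A$ is a finite separable, in particular reduced and commutative, $k$-algebra, and being $p$-dimensional with $p$ distinct roots available in $k$ it splits completely.

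Finally, assemble: $\L\cong k^{p^3}$ as a $k$-algebra, which is semisimple, and this identifies \textbf{A1} $=$ \textbf{C1} as the claimed algebra. I do not expect any genuine obstacle here — the statement is elementary — but the one point to be careful about is that the field $k$ is only assumed of characteristic $p$, not algebraically closed; the argument above uses only $\mathbb{F}_p\subseteq k$ (to split $t^p-t$) together with separability, so no algebraic closure hypothesis is needed, and this is worth stating explicitly since the surrounding classification theorem does assume $k$ algebraically closed.
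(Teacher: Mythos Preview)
Your proposal is correct and follows essentially the same approach as the paper: both identify $\L$ as the threefold tensor product of $k[t]/\langle t^p-t\rangle$ and use the factorization of $t^p-t$ over $\mathbb{F}_p\subseteq k$ to obtain $k^p$, hence $k^{p^3}$. Your version is slightly more explicit (invoking the Chinese Remainder Theorem and noting that algebraic closure is unnecessary), but there is no substantive difference in strategy.
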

\begin{proof}
  The algebra of type \textbf{A1} (or \textbf{C1}) are given as
  \[\L = k[x,y,z]/\langle x^p - x, y^p - y, z^p - z\rangle\] 
  for a field $k$ of characteristic $p$.  The polynomial $u^p -u$ in $u$
  has $p$ different roots in $\mathbb{Z}_p$, so that
  $k[u]/\langle u^p-u\rangle \simeq k^p$.  Furthermore we have that
\[ \L\simeq  k[x]/(x^p-x) \otimes_k k[y]/(y^p-y)\otimes_k k[z]/(z^p-z)
  \simeq k^{p^3}.\]
\end{proof}

\begin{numremark}\label{rem:idempotents}
As we already said, the $p$-dimensional algebra 
$B = k[u]/\langle u^p-u\rangle$ is isomorphic to $k^p$ for $k$ of characteristic $p$.
There is an elementary explicit formula for the orthogonal primitive
idempotents of this algebra, and we will use this later. Namely 
for $r\in \mathbb{Z}_p$ let
$$e_r:= \prod_{\stackrel{s=0}{s\neq r}}^{p-1} \frac{(u-s)}{(r-s)}.
$$
Then $ue_r = re_r$, and the $e_r$ for $r\in \mathbb{Z}_p$ are
pairwise orthogonal idempotents and their sum is the identity of the algebra.
We mention two consequences.

(1) Suppose $0\neq \zeta \in B$ and $u\zeta = r\zeta$, then $\zeta$ is
a scalar multiple of $e_r$. Namely, we have $\zeta = \sum_i \lambda_ie_i$ for
$\lambda_i\in k$, and then 
$0 = (u-r)\zeta = \sum_i \lambda_i(u-r)e_i$. From the formula, $(u-r)e_i=0$ 
for $i=r$ and otherwise it is a non-zero scalar multiple of $e_i$, and
the claim follows.

(2) Suppose $B$ is contained in some algebra $\Lambda$, and $y\in \Lambda$
satisfies $[u,y] = y$. Then $ye_i = e_{i+1}ye_i$. Namely we have
$uy = y(u+1)$ and therefore $u(ye_i) = y(u+1)e_i = (i+1)ye_i$. 
Variations of this argument will be used later.
\end{numremark}

\section{Group algebras}

In this section we find group algebras occurring in the classification
of the finite dimensional connected Hopf algebras of dimension $p^3$.
Here we denote by $C_n$ the cyclic group of order $n$, and for a
finite group $G$ we denote by $kG$ the group algebra of $G$ over a
field $k$.

\begin{prop} Let $k$ be a field of characteristic $p$. 
\begin{enumerate}[\rm(a)]
\item  The algebras  \textup{\textbf{A2}} and \textup{\textbf{C2}} are equal, and
  they are isomorphic to $kC_{p^3}$.
\item \sloppypar The algebras  \textup{\textbf{A3}} and \textup{\textbf{C4}} are equal, and
  they are isomorphic to 
\[k(C_p\times C_p\times C_p).\]
\item The algebras  \textup{\textbf{A4}} and \textup{\textbf{C3}} are
  isomorphic, and they are isomorphic to
\[k(C_p\times C_{p^2}).\] 
\item The algebra \textup{\textbf{C7}} is isomorphic to
the direct sum of  $p$ copies of $k(C_p\times C_p)$.
\item The algebra  \textup{\textbf{C8}} is isomorphic to the direct sum of 
 $p$ copies of $kC_{p^2}$. 
\item The algebra \textup{\textbf{C9}} is isomorphic to the direct sum of $p^2$ copies of
  $kC_p$.
\item The algebra  \textup{\textbf{C10}} is isomorphic to 
the direct sum of $k(C_p\times
  C_p)$ and  $p - 1$ copies of $M_p(k)$. 
\end{enumerate}
\end{prop}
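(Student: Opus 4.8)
The plan is to establish each isomorphism by identifying suitable group-like (or more precisely, invertible) elements inside the given presentation and comparing dimensions. For parts (a)--(c), since these Hopf algebras are commutative and cocommutative of dimension $p^3$ over an algebraically closed field, they are group algebras of abelian $p$-groups; the only issue is to pin down which group. The key observation is that if $u$ satisfies $u^p = u$ in a commutative algebra then $1+u$ (or rather a correctly chosen element) is \emph{not} of $p$-power order, so one should instead look at generators satisfying $v^p = 0$: then $1+v$ has order dividing $p$ (and exactly $p$ if $v\neq 0$), while a generator $w$ with $w^p = w'$ for another nilpotent-type generator $w'$ gives an element of order $p^2$, and so on. Concretely, for \textbf{A2} $=k[x,y,z]/\langle x^p, y^p-x, z^p-y\rangle$ one has $x$ nilpotent with $x^p = 0$, then $y^p = x$, $z^p = y$, so $z^{p^3} = z^{p^2\cdot p}$ chains down to $0$ and $1+z$ generates a cyclic group of order $p^3$; a dimension count finishes (a). Part (b) is immediate since $x,y,z$ all satisfy $x^p=0$ etc., so $1+x,1+y,1+z$ generate $C_p\times C_p\times C_p$. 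Part (c): in \textbf{A4} $=k[x,y,z]/\langle x^p, y^p, z^p-x\rangle$, the elements $1+y$ and $1+z$ give $C_p$ and $C_{p^2}$ respectively (since $z^{p^2}=x^p=0$ but $z^p=x\neq 0$), they commute, and the subgroup they generate has order $p^3$.

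For parts (d)--(g) the new feature is the generator $z$ (or $x$) satisfying $w^p = w$, which by Remark \ref{rem:idempotents} splits the algebra as a direct product of $p$ copies (indexed by $r\in\Z_p$) of the algebra obtained by specializing that generator. So for \textbf{C7} $=k[x,y,z]/\langle x^p,y^p,z^p-z\rangle$, using the idempotents $e_r$ attached to $z$, one gets $\L\cong\prod_{r\in\Z_p}\L e_r$, and each $\L e_r\cong k[x,y]/\langle x^p,y^p\rangle\cong k(C_p\times C_p)$, giving (d). Likewise \textbf{C8} $=k[x,y,z]/\langle x^p-y,y^p,z^p-z\rangle$ splits over $z$ into $p$ copies of $k[x,y]/\langle x^p-y,y^p\rangle\cong k[x]/\langle x^{p^2}\rangle\cong kC_{p^2}$ (using $1+x$ of order $p^2$), proving (e). For \textbf{C9} $=k[x,y,z]/\langle x^p,y^p-y,z^p-z\rangle$ one splits over \emph{both} $y$ and $z$ to get $p^2$ copies of $k[x]/\langle x^p\rangle\cong kC_p$, proving (f). In each case the central idempotents being used are genuinely central because $x,y,z$ commute, so the splitting is as algebras and the tensor decomposition of polynomial rings applies verbatim.

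The genuinely new work is part (g), \textbf{C10} $=k\langle x,y,z\rangle/\langle [x,y]-z,[x,z],[y,z],x^p,y^p,z^p-z\rangle$, which is noncommutative: here $z$ is central (it commutes with $x$ and $y$ by the relations $[x,z]=[y,z]=0$) and satisfies $z^p=z$, so Remark \ref{rem:idempotents}(1) still applies and $\L\cong\prod_{r\in\Z_p}\L e_r$ where on $\L e_r$ the central element $z$ acts as the scalar $r$. Thus $\L e_r$ is a quotient of the restricted enveloping algebra with relations $[x,y]=r$, $x^p=y^p=0$. For $r=0$ this is $k[x,y]/\langle x^p,y^p\rangle\cong k(C_p\times C_p)$, the one ``$r=0$'' factor. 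For $r\neq 0$, after rescaling (replace $y$ by $r^{-1}y$) we get the relation $[x,y]=1$ with $x^p=y^p=0$: this is the $p$-dimensional algebra generated by an element $x$ of the ``$p$-nilpotent'' type and its ``derivative'' $y=\partial_x$, which is well known to be isomorphic to the full matrix algebra $M_p(k)$ — indeed it acts faithfully and irreducibly on $k[x]/\langle x^p\rangle\cong k^p$ with $x$ acting by multiplication and $y$ by $\tfrac{d}{dx}$. Hence each of the $p-1$ nonzero factors is $M_p(k)$, and a dimension check $p^2 + (p-1)p^2 = p^3$ confirms we have accounted for everything, proving (g). The main obstacle is precisely this identification of the Weyl-type $p$-dimensional algebra $k\langle x,y\rangle/\langle[x,y]-1,x^p,y^p\rangle$ with $M_p(k)$; everything else is bookkeeping with commuting idempotents and dimension counts.
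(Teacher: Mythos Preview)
Your argument is correct, and for parts (a)--(f) it is essentially the paper's proof in different clothing: where the paper writes explicit surjections from $k[u]$ and tensor-factorises, you pick out the unit $1+v$ with $v$ nilpotent; these are the same computation. Two small slips to clean up: your opening remark about ``algebraically closed'' is irrelevant (and the proposition is stated for arbitrary $k$ of characteristic $p$), but your actual arguments never use it; and in (g) you call the Weyl-type quotient ``the $p$-dimensional algebra'' when you mean $p^2$-dimensional.

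For (g) your route genuinely differs from the paper's. The paper shows each $\L e_i$ ($i\neq 0$) is simple by an element-chasing argument (any nonzero two-sided ideal contains $x^{p-1}y^{p-1}e_i$, then one strips off powers to reach $e_i$), invokes Wedderburn--Artin, and rules out division-ring factors by a dimension count over $\mathbb{Z}_p$ before inducing up to $k$. You instead rescale $y$ to normalise $[x,y]=1$ and exhibit the faithful irreducible representation on $k[x]/\langle x^p\rangle$ by multiplication and differentiation, which identifies the block with $M_p(k)$ directly. Your approach is cleaner and avoids the base-change to $\mathbb{Z}_p$; the paper's approach has the virtue of not needing to check faithfulness of a representation. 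One caution: with $y$ acting as $\tfrac{d}{dx}$ you get $[x,y]=-1$, not $+1$, so either let $y$ act as $-\tfrac{d}{dx}$ or swap the roles of $x$ and $y$; also, to conclude the map $A\to\End_k(k^p)$ is an isomorphism you should note that $\dim_k A=p^2$ (from the PBW basis $\{x^iy^je_r\}$), since faithfulness plus irreducibility alone do not give surjectivity without this.
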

\begin{proof}
  (a) The algebras of type \textbf{A2} and \textbf{C2} are given as (with a possible change of variables)
  \[\L = k[x,y,z]/\langle x^p - y, y^p - z, z^p\rangle.\]  The algebra homomorphism
  $k[u] \to \L$ sending
  $u\mapsto x + \langle x^p -y, y^p - z, z^p\rangle$ is surjective
  with kernel $\langle u^{p^3}\rangle$.  Hence
  $\L \simeq k[u]/\langle u^{p^3}\rangle$, which is isomorphic to
  $kC_{p^3}$.

  (b) The algebras of type \textbf{A3} and \textbf{C4} are given as
  $\L = k[x,y,z]/\langle x^p, y^p, z^p\rangle$.  Then
\[\L \simeq k[x]/\langle x^p\rangle \otimes_k k[y]/\langle y^p\rangle
\otimes_k k[z]/\langle z^p\rangle.\]
This is in turn isomorphic to $k(C_p\times C_p\times C_p)$. 

(c) The algebra of type \textbf{A4} and \textbf{C3} are given as (with a possible change of variables)
$\L = k[x,y,z]/\langle x^p, y^p - z, z^p\rangle$.  Then
\begin{align}
\L & \simeq k[x]/\langle x^p\rangle \otimes_k k[y, z]/\langle y^p - z,
z^p\rangle\notag\\
& \simeq k[x]/\langle x^p\rangle \otimes_k k[u]/\langle
u^{p^2}\rangle,\notag
\end{align}
since the algebra homomorphism $k[u] \to k[y, z]/\langle y^p -z,
z^p\rangle$ sending $u\mapsto y + \langle y^p -z, z^p\rangle$ is
surjective with kernel $\langle u^{p^2}\rangle$.  This algebra is
isomorphic to $k(C_p\times C_{p^2})$. 

(d) The algebra of type \textbf{C7} is given as
$\L = k[x,y,z]/\langle x^p, y^p, z^p-z\rangle$.  Then
\begin{align}
\L &\simeq k[x]/\langle x^p\rangle \otimes_k k[y]/\langle y^p\rangle
\otimes_k k[z]/\langle z^p - z\rangle\notag\\
& \simeq k[x]/\langle x^p\rangle \otimes_k k[y]/\langle y^p\rangle
\otimes_k k^p\notag
\end{align}
The last algebra is isomorphic to $k(C_p\times C_p)\otimes_k k^p$,
which is isomorphic to $p$ copies of $k(C_p\times C_p)$. 

(e) The algebra of type \textbf{C8} is given as
$\L = k[x,y,z]/\langle x^p - y, y^p, z^p-z\rangle$.  Then
\begin{align}
\L & \simeq k[x,y]/\langle x^p - y, y^p\rangle \otimes_k k[z]/\langle
z^p - z\rangle\notag\\
 & \simeq k[x,y]/\langle x^p - y, y^p\rangle \otimes_k k^p\notag\\
 & \simeq k[u]/\langle u^{p^2}\rangle\otimes_k k^p,\notag
\end{align}
since the algebra homomorphism
$k[u] \to k[x,y]/\langle x^p -y, y^p\rangle$ sending
$u\mapsto x + \langle x^p -y, y^p\rangle$ is surjective with kernel
$\langle u^{p^2}\rangle$.  This last algebra occurring above is in
turn isomorphic to $kC_{p^2}\otimes_k k^p$, which is isomorphic to $p$
copies of $kC_{p^2}$. 

(f) The algebra of type \textbf{C9} is given as
$\L = k[x,y,z]/\langle x^p, y^p - y, z^p-z\rangle$.  Then
\begin{align}
\L & \simeq k[x]/\langle x^p\rangle \otimes_k k[y]/\langle y^p - y\rangle
\otimes_k k[z]/\langle z^p - z\rangle\notag\\
& \simeq k[x]/\langle x^p\rangle \otimes_k k^{p^2}\notag
\end{align}
This last algebra is isomorphic to $kC_p\otimes_k k^{p^2}$, which in
turn is isomorphic to $p^2$ copies of $kC_p$. 

(g) The algebra of type \textbf{C10} is given as
\[\L = k\langle x, y,z\rangle/\langle [x,y] - z, [x,z], [y,z], x^p,
  y^p,z^p - z\rangle.\] 
It is clear from the relations that the subalgebra generated by
$z$ is in the center of $\L$ and is giving rise to $p$
orthogonal central idempotents $\{e_i\}_{i=0}^{p-1}$ with
$ze_i = ie_i$ as in Remark \ref{rem:idempotents}.  Then $\L =
\oplus_{i=0}^{p-1} \L e_i$, and we  
have that $ie_i = ze_i = [x,y]e_i$.  Furthermore
$\L e_i$ is generated by $\{xe_i, ye_i\}$, hence we
have a surjective homomorphism of rings
$\varphi_i\colon k\langle x,y\rangle \to \L e_i$ sending
$x\mapsto xe_i$ and $y\mapsto ye_i$.  The kernel of
$\varphi_i$ is generated by the elements
$x^p, y^p, [x,y] - i$ giving that
$\L e_i\simeq k\langle x,y\rangle/\langle x^p,y^p, [x,y] - i\rangle$.

For $i=0$ we get that $\L e_0 \simeq k[x, y]/\langle x^p,
y^p\rangle\simeq k[x]/\langle x^p\rangle\otimes_k k[y]/\langle
y^p\rangle$, which is isomorphic to $k(C_p\times C_p)$.  

Now consider the case when $i$ is in $[1,\ldots,p-1]$.  We have that 
\[\L \simeq 
  k\otimes_{\mathbb{Z}_p} \mathbb{Z}_p\langle x, y,z\rangle/\langle
  [x,y] - z, [x,z], [y,z], x^p, y^p,z^p - z\rangle,\]
and all the arguments we used above are valid over $\mathbb{Z}_p$.  So
we first analyze the algebra over $\mathbb{Z}_p$ and then induce up to
$k$.  So to reduce the amount of notation we also call the algebra
$\L$ when considering it given over the field $\mathbb{Z}_p$.  

Any element in $\L e_i$ can be written as a linear combination of
$\{ x^ry^se_i\}_{r,s=0}^{p-1,p-1}$.  Given any
non-zero ideal $I$ in $\L e_i$, we can assume that
$x^{p-1}y^{p-1}e_i$ is in $I$ by multiplying
with a power of $x$ from the left and a power of
$y$ from the right.  Using the identity
$xye_i - ie_i = yxe_i$, we
obtain that
$y^se_ix = xy^se_i - s
iy^{s-1}e_i$ for $s\geqslant 1$.  Starting with multiplying
the element $x^{p-1}y^{p-1}e_i$ with
$x$ from the right, we infer that
$x^{p-1}y^{p-2}e_i$ is in $I$.  Inductively it
follows that $x^{p-1}e_i$ (and by similar arguments that
$y^{p-1}e_i$) is in $I$.  Consequently,
$x^{p-1}y^se_i$ is in $I$ for all
$1\leqslant s\leqslant p-1$.  By multiplying $x^{p-1}e_i$
from the left with $y$, we infer that
$x^{p-2}e_i$ is in $I$.  Continuing this way we obtain
that $e_i$ is in $I$ and $I = \L e_i$.  It follows that $\L e_i$ is a
simple ring for $i$ in $[1,\ldots,p-1]$.  By Wedderburn-Artin $\L e_i$
is isomorphic to $M_n(D)$ for $n\geq 1$ and a division ring $D$.
Using that $\dim_{\mathbb{Z}_p} \L e_i = p^2$, one can show that the
only possibility for $\L e_i$ is $M_p(\mathbb{Z}_p)$.  Inducing up to
$k$, we infer that the algebra of type \textbf{C10} is isomorphic to
$k(C_p\times C_p) \oplus M_p(k)^{p-1}$.  In fact, a simple $\L e_i$-module $S$
with basis $\mathcal{B} = \{b_0,b_1,\ldots,b_{p-1}\}$ can be
constructed in the following way.  Let $xb_j = 
\begin{cases} 
b_{j - 1}, & j \neq 0\\
0, & j = 0
\end{cases}$
and 
$yb_j = 
\begin{cases} 
\lambda_{j+1}b_{j + 1}, & j \neq p - 1\\
0, & j = p - 1
\end{cases}$ for $\lambda_{j+1}=-(j+1)i$.  Then one can show
that the vector space spanned by $\mathcal{B}$ is a simple
$\L e_i$-module.
\end{proof}

\section{Selfinjective Nakayama algebras}

In this section we find selfinjective Nakayama algebras occurring in the
classification of the finite dimensional connected Hopf algebras of
dimension $p^3$.  Here we denote by $\widetilde{\mathbb{A}}_n$ the
quiver consisting of $n + 1$ vertices with one arrow starting in each
vertex forming an oriented cycle.  For a field $k$ and an integer
$n\geq 1$ we denote by $J$ the ideal in $k\widetilde{\mathbb{A}}_n$
generated by the arrows. 

\begin{prop}\label{prop:selfinjNaka}
Let $k$ be a field of characteristic $p$. 
\begin{enumerate}[\rm(a)]
\item The algebra of type \textup{\textbf{B2}} is isomorphic to
  $k\widetilde{\mathbb{A}}_{p^2-1}/J^p$, which is not a symmetric algebra. 
\item The algebra of type \textup{\textbf{C12}} is isomorphic to
  $k\widetilde{\mathbb{A}}_{p-1}/J^{p^2}$. 
\item The algebra of type \textup{\textbf{C13}} is isomorphic to a direct sum
  of $p$ copies of the selfinjective Nakayama algebra
  $k\widetilde{\mathbb{A}}_{p-1}/J^p$ of dimension $p^2$. 
\item The algebra of type \textup{\textbf{C14}} is isomorphic 
$k\widetilde{\mathbb{A}}_{p-1}/J^p$  direct sum $p-1$ copies of
$M_p(k)$. 
\end{enumerate}
\end{prop}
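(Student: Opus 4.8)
The plan is to analyze each of the four algebras \textbf{B2}, \textbf{C12}, \textbf{C13}, \textbf{C14} in turn by producing an explicit set of central idempotents (when needed) and then a presentation by quiver and relations. The unifying mechanism in all four cases is the same one already used in Proposition for \textbf{C10} and recorded in Remark \ref{rem:idempotents}: each algebra contains an element $x$ with $x^p - x = 0$, so the subalgebra $k[x]/\langle x^p-x\rangle \cong k^p$ supplies orthogonal primitive idempotents $e_r$ ($r \in \mathbb{Z}_p$) with $xe_r = re_r$, and each algebra also contains a $y$ with $[x,y] = y$, so that $ye_r = e_{r+1}ye_r$ by Remark \ref{rem:idempotents}(2). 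This immediately produces a cyclic quiver structure: the $e_r$ are the vertices of $\widetilde{\mathbb{A}}_{p-1}$ and (powers of) $y$ are the arrows. One then has to identify the remaining generator and the ideal of relations, and count dimensions to confirm the admissible quotient is the claimed power $J^m$ of the arrow ideal.

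For \textbf{C14}, $\L = k\langle x,y,z\rangle/\langle [x,y]-y, [x,z], [y,z], x^p-x, y^p-z, z^p-z\rangle$. Here $z = y^p$ is central (from $[x,z]=0$, $[y,z]=0$), and $z^p = z$, so the subalgebra $k[z]/\langle z^p - z\rangle \cong k^p$ gives central idempotents $f_j$ with $zf_j = jf_j$, hence $\L = \bigoplus_{j=0}^{p-1}\L f_j$. On $\L f_0$ we have $y^p = z f_0 = 0$, and the $x$-idempotents $e_r$ together with $y$ give the quiver $\widetilde{\mathbb{A}}_{p-1}$ with $y$ as the arrows; since $\dim_k \L f_0 = p^2$ and $y^p = 0$, we get $\L f_0 \cong k\widetilde{\mathbb{A}}_{p-1}/J^p$. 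On $\L f_j$ for $j \neq 0$ we have $y^p = j f_j$ invertible, so $y$ acts invertibly and (as in the \textbf{C10} argument: build an explicit simple module of dimension $p$, show the algebra is simple, and use $\dim = p^2$) each $\L f_j \cong M_p(k)$. This gives the claimed decomposition $k\widetilde{\mathbb{A}}_{p-1}/J^p \oplus M_p(k)^{p-1}$.

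For \textbf{C12}, $\L = k\langle x,y,z\rangle/\langle [x,y]-y, [x,z], [y,z], x^p-x, y^p-z, z^p\rangle$: again $z = y^p$ is central with $z^p = 0$, so $y^{p^2} = z^p = 0$ but no central idempotent splitting occurs. The idempotents $e_r$ from $x$ are the vertices; $y$ sends $e_r \to e_{r+1}$ and is the unique arrow out of each vertex; $\dim_k \L = p^3$ with $p$ vertices forces the arrow-ideal quotient to be $J^{p^2}$, i.e. $\L \cong k\widetilde{\mathbb{A}}_{p-1}/J^{p^2}$. For \textbf{C13}, $\L = k\langle x,y,z\rangle/\langle [x,y]-y, [x,z],[y,z], x^p - x, y^p, z^p - z\rangle$: now $z$ is central with $z^p = z$, giving central idempotents $f_j$, $\L = \bigoplus_j \L f_j$, each $\L f_j$ generated by $x, y$ with $x^p - x$, $y^p = 0$, $[x,y] = y$, so $\L f_j \cong k\widetilde{\mathbb{A}}_{p-1}/J^p$ of dimension $p^2$, and there are $p$ such summands. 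For \textbf{B2}, $\L = k\langle x,y,z\rangle/\langle [x,y]-y,[x,z],[y,z]-yf(x), x^p-x, y^p, z^p - z\rangle$ with $f(x) = \sum_{i=1}^{p-1}(-1)^{i-1}(p-i)^{-1}x^i$; the point of the specific $f$ should be that, after passing to the $x$-idempotents $e_r$, the element $z$ no longer commutes with $y$ but combines with the cyclic action so that a suitable element (some combination of $z$ and a power of $x$) together with $y$ generates a single cyclic quiver on $p^2$ vertices — here I expect $x$ alone does not give enough vertices and one needs $z$ to ``refine'' the idempotents into $p^2$ orthogonal primitive ones, yielding $k\widetilde{\mathbb{A}}_{p^2-1}/J^p$. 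Non-symmetry in (a) then follows by computing the Nakayama permutation of this Nakayama algebra: $k\widetilde{\mathbb{A}}_{n}/J^\ell$ is symmetric iff the Nakayama permutation is trivial, which for $\widetilde{\mathbb{A}}_{p^2-1}/J^p$ fails since $p \nmid p^2$ shifts the vertices nontrivially.

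The main obstacle is case \textbf{B2}: unlike the other three, the relation $[y,z] = yf(x)$ is genuinely noncommutative in $z$, so the decomposition is subtler — one must verify that the combined action of $x$ and $z$ produces exactly $p^2$ orthogonal primitive idempotents arranged in a single $\widetilde{\mathbb{A}}_{p^2-1}$-cycle with $y$ inducing the shift, and that $y^p = 0$ forces exactly $J^p$. Concretely I would look for an element $w$ (a polynomial in $x$ and $z$, the choice of $f$ being designed so that $[w,y] = y$ or similar) whose minimal polynomial has degree $p^2$ with distinct roots, giving the $p^2$ idempotents; checking this identity is the one place where the explicit form of $f(x)$ must be used and where a short but non-routine computation is unavoidable. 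Everything else is dimension-counting and reuse of the \textbf{C10}/Remark \ref{rem:idempotents} template.
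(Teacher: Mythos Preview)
Your treatment of (b), (c), (d) is correct and essentially matches the paper's argument: central (or semisimple) subalgebras from $x$ and/or $z$ give the vertices, $y$ supplies the arrows via Remark~\ref{rem:idempotents}(2), and a dimension count pins down the power of $J$. For \textbf{C14} the paper shows simplicity of $\L f_j$ ($j\neq 0$) by an ideal argument and then identifies $M_p(k)$ via Wedderburn--Artin; your variant via an explicit simple module is equivalent.

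For (a), your overall picture is right but your concrete plan is off. You overlooked that $[x,z]=0$ in \textbf{B2}: the subalgebra generated by $x$ and $z$ is simply $k[x]/\langle x^p-x\rangle\otimes_k k[z]/\langle z^p-z\rangle\cong k^{p^2}$, so the $p^2$ primitive idempotents are just the products $e_ig_j$ with $xe_i=ie_i$ and $zg_j=jg_j$. There is no need to search for a single element $w$ with $[w,y]=y$; indeed the relevant computation is not about $w$ at all. What the paper does is compute directly how $y$ moves these idempotents: from $xy=y(x+1)$ one gets $ye_i=e_{i+1}ye_i$, and from $zy=yz-yf(x)$ one gets $z(ye_ig_j)=(j-f(i))\,ye_ig_j$, hence $ye_ig_j=e_{i+1}g_{j-f(i)}\cdot ye_ig_j$. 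So the arrow out of $(i,j)$ goes to $(i+1,\,j-f(i))$, and the only nontrivial step is to check that this permutation of $\mathbb{Z}_p\times\mathbb{Z}_p$ is a single $p^2$-cycle. After $p$ steps the first coordinate returns to its start and the second has shifted by $-\sum_{i=0}^{p-1}f(i)$; the specific form of $f$ is used here via the power-sum identity $\sum_{j=1}^{p-1}j^m\equiv -\delta_{m,p-1}\pmod p$, giving a shift of $-1$ in the second coordinate after each $p$ steps, hence orbit size $p^2$. Your proposed search for $w$ would, at best, only succeed after already knowing this cycle structure. For non-symmetry, your Nakayama-permutation idea is correct in spirit but the phrasing ``$p\nmid p^2$'' is garbled; the clean criterion is that $k\widetilde{\mathbb{A}}_{n}/J^{t}$ is symmetric iff $(n+1)\mid(t-1)$, and $p^2\nmid p-1$.
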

\begin{proof}
  (a) The algebra of type \textbf{B2} is given as
  \[\L=k\langle x, y, z\rangle/\langle [x,y] - y, [x,z], [y,z] - yf(x),
    x^p-x,y^p,z^p-z\rangle,\] 
  where $f(x)=\sum_{i=1}^{p-1}(-1)^{i-1}(p-i)^{-1}x^i$.  We have that
  $yx = xy - y$, $zx = xz$
  and $zy = yz - yf(x)$.  This implies that any
  monomial in the variables
  $\{x, y, z\}$ can be written as a
  linear combination of monomials of the form $x^ry^sz^t$
  for $r$, $s$ and $t$ in $\{0,1,\ldots,p-1\}$.  Hence the set
  $\mathcal{B} = \{x^ry^sz^t\}_{r,s,t=1}^{p-1}$ spans $\L$,
  which consists of $p^3$ elements.  Since the dimension of $\L$ is
  $p^3$, the set $\mathcal{B}$ is a basis for $\L$.

  All the relations for the algebra are homogeneous in $y$.  It
  follows from this that
  $\langle y\rangle^i = \langle y^i\rangle$.
  Since $y^p = 0$, it follows that
  $\langle y\rangle^p=(0)$.  Furthermore,
  \[\L/\langle y\rangle \simeq k[x]/\langle
  x^p-x\rangle\otimes_k k[z]/\langle z^p-z\rangle\simeq k^{p^2}.\] 
It follows that $\rad\L = \langle y\rangle$.  

The subalgebra of $\L$ generated by $x$ and $z$ is
  \[k\langle x, z\rangle/\langle x^p - x, [x,z], z^p -z\rangle \simeq
    k[x]/\langle x^p-x\rangle\otimes_k k[z]/\langle z^p - z\rangle,\]
  which is commutative.  Let $e_0, \ldots, e_{p-1}$ be the orthogonal
  primitive idempotents of the algebra generated by $x$ so that
  $xe_i=ie_i$.  Let also $g_0, \ldots, g_{p-1}$ be the
  orthogonal primitive idempotents of the algebra generated by $z$ so
  that $zg_j=jg_j$. Then these span a commutative
  semisimple subalgebra of dimension $p^2$ of $\L$, with orthogonal
  primitive idempotents $e_ig_j$.  These correspond to the vertices in
  the quiver we shall construct (as they are liftings of the primitive
  idempotents in $\L/\rad \L$).  
  
  These orthogonal idempotents decompose $\L = \oplus_{i, j} \L e_ig_j$
  into a direct sum of $p^2$ left modules.  The summand $\L e_ig_j$ has
  basis $\{ y^r e_ig_j\}_{r = 0}^{p-1}\}$ and the radical is generated
  by $ye_ig_j$ (and $\L e_ig_j$ is indecomposable as a left module).
  Hence $\rad\L/\rad^2\L$ is generated by
  $\{ye_ig_j\}_{i,j=0}^{p-1,p-1}$, and they correspond to the arrows
  in the quiver we shall construct.   

  We claim that $ye_ig_j = ( e_{i+1}g_{j - f(i)})ye_ig_j(e_ig_j)$.
  Clearly $ye_ig_j = ye_ig_j(e_ig_j)$.  

  We have the relation $xy=y(x + 1)$ which implies that
  $x\cdot ye_i = (i + 1)ye_i$ and hence
  $x\cdot (ye_ig_j) = (i + 1)ye_ig_j$.  Using Remark
    \ref{rem:idempotents} we obtain that
\[e_{i+1}ye_ig_j = ye_ig_j.\]

We have   $zy = yz - y\cdot f(x)$ and $z, x$ commute, so 
\[z(yg_j) = yzg_j - yf(x)g_j = j\cdot yg_j - yg_j\cdot f(x)\]
and therefore
\begin{align}
z(ye_ig_j) = z(yg_je_i) & = (jyg_j)e_i - (yg_jf(x))e_i\notag\\
                                 & = jyg_je_i - yg_jf(i)e_i\notag\\
                                & = (j - f(i))\cdot (ye_ig_j)\notag
\end{align}
and again by Remark \ref{rem:idempotents}, we have that 
$yg_je_i =  g_{j - f(i)}\cdot yg_je_i$.  It follows that $ye_ig_j
= ( e_{i+1}g_{j-f(i)})ye_ig_j(e_ig_j)$. 

It follows from the above that if $Q$ is a quiver with $p^2$ vertices
$v_{i,j} = ye_ig_j$ and $p^2$ arrows
$a_{i,j}\colon v_{i,j}\to v_{i+1,j-f(i)}$, then $\L$ is a quotient of
$kQ$.  To show that $Q$ is $\widetilde{\mathbb{A}}_{p^2-1}$, we need to
show that the orbit of $(0,0)$ under the map
$(i,j)\mapsto (i+1, j - f(i))$ is all of $\Z_p\times \Z_p$.  After
$kp$ steps one gets
\begin{equation}\label{eq:orbit}
(kp, -f(1) - f(2) - \cdots - f(kp-1))
\end{equation}
To compute this, note that for $1\leq m\leq p-1$ one has 
$\sum_{j=1}^{p-1} j^m \equiv -1 \mod p$ if
$m = p-1$ and is zero otherwise (To see this, one can for example use
\cite[Lemma 4.3]{Gr}).  Then one gets that \eqref{eq:orbit} is equal
to $(0, -k)$ and it follows that the orbit of $(0,0)$ must have size
$p^2$ as required.  Hence the quiver $Q$ is
$\widetilde{\mathbb{A}}_{p^2-1}$ with the orientation as an oriented
cycle.  Since the Loewy length of the indecomposable projectives
$\L e_ig_j$ are all equal, the relation ideal is $J^p$.

A Nakayama algebra $k\widetilde{\mathbb{A}}_n/J^t$ is symmetric if and
only if $n+1$ divides $t - 1$ by \cite[Corollary IV.6.16]{SY}. Since
$p^2\nmid p - 1$ for every prime $p$, all the algebras of type
\textbf{B2} are not symmetric.

\begin{remark} 
  Note that the field $k$ can be arbitrary of characteristic $p$.  
\end{remark}

(b) The algebra of type \textbf{C12} is given as
\[\L = k\langle x, y,z\rangle/\langle [x,y] - y, [x,z], [y,z], x^p - x,
y^p - z,z^p\rangle.\]  
Any element in $\L$ can be written as a linear combination of elements
in the set $\{x^ry^s\}_{r,s=0}^{p-1,p^2-1}$.  Let
$\mathfrak{a} = \langle y\rangle$ in $\L$.  Since the
relations are homogeneous in $y$, it follows from the above that
$\mathfrak{a}$ is a nilpotent ideal in $\L$.  Furthermore it is easy
to see that
$\L/\mathfrak{a} \simeq k[x]/\langle x^p - x\rangle \simeq k^p$.  We
infer that $\rad\L = \mathfrak{a}$.  By the above comments we have
that $\rad\L$ has basis $\{ x^ry^s\}_{r=0, s\geqslant 1}^{p-1, p^2-1}$
(dimension $p^3-p$) and $\rad^2\L$ has basis
$\{ x^ry^s\}_{r=0, s\geqslant 2}^{p-1, p^2-1}$ (dimension $p^3-2p$).
The $\rad\L/\rad^2\L$ has a basis given by the residue classes of the
elements $\{ x^ry\}_{r=0}^{p-1}$.  Hence $\L$ is isomorphic to a
quotient of a path algebra $kQ$ over $k$, where $Q$ has $p$ vertices
\[\left\{ v_\alpha = \frac{\prod_{\beta\in \mathbb{Z}_p\setminus
      \{\alpha\}} (x-\beta)}{\prod_{\beta\in \mathbb{Z}_p\setminus
      \{\alpha\}} (\alpha-\beta)}\right\}_{\alpha\in\mathbb{Z}_p}\]
and $p$ arrows given by $\{v_\alpha y\}_{\alpha\in\mathbb{Z}_p}$.
By part (2) of Remark \ref{rem:idempotents} we have that 
\[v_\alpha y = y v_{\alpha - 1},\]
so that $v_{\alpha + 1} y$ is an arrow from vertex $v_\alpha\to
v_{\alpha + 1}$ for all $\alpha$ in $\mathbb{Z}_p$. 
It follows that $\L\simeq k\widetilde{\mathbb{A}}_{p-1}/J^{p^2}$.  

(c) The algebra of type \textbf{C13} is given as
\[\L = k\langle x, y,z\rangle/\langle [x,y] - y, [x,z], [y,z], x^p - x,
y^p,z^p - z\rangle.\]  
This is isomorphic to the tensor product 
\[\L \cong k\langle x, y\rangle/\langle [x,y]-y, x^p-x, y^p\rangle
  \otimes_k k[z]/\langle z^p-z\rangle.\] The second tensor factor is
semisimple, isomorphic to $k^p$, hence $\L$ is the direct sum of $p$
copies of
$B = k\langle x, y\rangle/\langle [x,y]-y, x^p-x, y^p\rangle$.  Let
$\mathfrak{a} = \langle y\rangle$ in $B$.  Since the relations are
homogeneous in $y$, it follows from the above that $\mathfrak{a}$ is a
nilpotent ideal in $B$.  Furthermore it is easy to see that
$B/\mathfrak{a} \simeq k[x]/\langle x^p - x\rangle \simeq k^{p}$.  We
infer that $\rad B = \mathfrak{a}$.  The subalgebra generated by $x$
gives rise to a complete set of orthogonal idempotents $e_i$ with
$xe_i = ie_i$ (see Remark \ref{rem:idempotents}), so that
$B = \oplus_{i=0}^{p-1}Be_i$. Then $Be_i$ has basis
$\{ y^se_i\}_{s=0}^{p-1}$ and its radical is generated by $ye_i$. We
have $x(ye_i) = y(x+1)e_i = (i+1)ye_i$, hence by Remark
\ref{rem:idempotents} $ye_i = e_{i+1}ye_i$ and $ye_i$ gives an arrow
$e_i\to e_{i+1}$. It follows from this that $B$ is isomorphic to
  the algebra $k\widetilde{\mathbb{A}}_{p - 1}/J^p$ of dimension
$p^2$.

(d) The algebra of type \textbf{C14} is given as
\[\L = k\langle x, y,z\rangle/\langle [x,y] - y, [x,z], [y,z], x^p - x,
y^p - z, z^p - z\rangle.\]  
It is clear from the relations that the subalgebra generated by
$z$ is in the center of $\L$ and is giving rise to $p$
orthogonal central idempotents $\{e_i\}_{i=0}^{p-1}$ with
$ze_i = ie_i$.  Then $\L = \oplus_{i=0}^{p-1} \L e_i$.  We
claim that $\L e_0$ is isomorphic to
$k\widetilde{\mathbb{A}}_{p-1}/J^p$, while $\L e_i$ is isomorphic to
$M_p(k)$ for $i\ge1$.  To do this, we first consider the algebra $\L$ as
an algebra over the prime field $\mathbb{Z}_p$.  The idempotents
constructed above are given over this field.  By abuse of notation we
still denote the algebra by $\L$ when considered over the field $k' =
\mathbb{Z}_p$. 

The subalgebra generated by $x$ gives rise to complete set
of orthogonal idempotents $\{ f_i\}_{i=1}^{p-1}$ with
$xf_i = i f_i$.  This implies that
$\{y^re_if_s\}_{r,s=0}^{p-1,p-1}$ is basis for $\L e_i$.  Let $i \ge 1$,
then we want to show that $\L e_i$ is a simple ring.  Let $I$ be a
non-zero ideal in $\L e_i$ with a non-zero element
$m = \sum_{r,s=0}^{p-1,p-1} \alpha_{r,s}y^re_if_s$.  Assume that
$\alpha_{r_0,s_0} \neq 0$.  Then
$m f_{s_0} = \sum_{r=0}^{p-1} \alpha_{r,s_0}y^re_if_{s_0}$ is in $I$.
Since $f_jy = yf_{j-1}$, it follows that
$f_{r_0+s_0}m f_{s_0} = \alpha_{r_0,s_0}y^{r_0}e_if_{s_0}$ is in
$I$ and consequently $y^{r_0}e_if_{s_0}$ is in $I$.  Multiplying this
last element from the right with powers of $y$, we obtain that
$y^{r_s}e_if_s$ is in $I$ for some $r_s$ for $s=0,1,\ldots,p-1$.
Multiplying from the left with $y^{p-r_s}$ we obtain that $e_if_s$ is
in $I$ for $s=0,1,\ldots,p-1$, hence $e_i$ is in $I$ and $I = \L e_i$
and $\L e_i$ is a simple ring.  Furthermore, $\L e_i \simeq M_n(D)$
for some $n\geq 1$ and a division ring $D$.  A simple module over
$\L e_i$ then has dimension $n\dim_{k'} D$ over $k'$.  Since $\L e_i$ is
non-commutative and $D$ is commutative, $n > 1$.  Using similar
arguments as above one can show that $S_t =
k'\{y^re_if_t\}_{r=0}^{p-1}$ is a simple $\L e_i$-module.  Hence
$n\dim_{k'} D = p$.  It follows that $n=p$ and $D=k'$, and therefore
when inducing up to the field $k$ we get that $\L e_i\simeq M_p(k)$
for $i \ge1$.   

For $\L e_0$, let $\varphi\colon k\langle x,y\rangle \to \L e_0$ be
induced by the inclusion
$k\langle x, y\rangle \hookrightarrow k\langle x, y,z\rangle$.  Then
$\{[x,y]-y, x^p-x, y^p\}$ are contained in $\Ker \varphi$.  Since
\[\dim_k\L e_0 = \dim_k k\langle x,y\rangle/\langle [x,y]-y, x^p-x,
  y^p\rangle = p^2,\] 
it follows that
$\L e_0 \simeq k\langle x,y\rangle/\langle [x,y]-y, x^p-x,
y^p\rangle$, which we consider as an identification.  The ideal
generated by $y$ is the radical of $\L e_0$.  View the
primitive orthogonal idempotents $\{f_i\}_{i=0}^{p-1}$ as elements in
$\L e_0$.  Then $\{y^rf_s\}_{r,s=0}^{p-1,p-1}$ is a $k$-basis for
$\L e_0$. Therefore $\{f_s\}_{s=0}^{p-1}$ is a $k$-basis for
$\L e_0/\rad \L e_0$ and $\{ yf_s\}_{s=0}^{p-1}$ is a $k$-basis for
$\rad\L e_0/\rad^2 \L e_0$.  We let these sets define the vertices and
the arrows in a quiver $Q$, which is $\widetilde{\mathbb{A}}_{p-1}$.
All the indecomposable projective modules have the same Loewy length
and $\L e_0$ has dimension $p^2$, so that $\L e_0$ is isomorphic to
$k\widetilde{\mathbb{A}}_{p-1}/J^p$. 

It follows from the above that $\L$ is isomorphic to direct sum of
$k\widetilde{\mathbb{A}}_{p-1}/J^p$ and $p - 1$ copies of $M_p(k)$.
\end{proof}

\section{Enveloping algebra of restricted Lie algebras}

In this section we find enveloping algebras of restricted Lie algebras
occurring in the classification of the finite dimensional connected Hopf
algebras of dimension $p^3$.

First we give a quick review of the definition of the enveloping
algebra of a restricted Lie algebra $L$.  A restricted Lie algebra $L$
is a (finite dimensional) Lie algebra over a field $k$ of
characteristic $p$ with Lie bracket $[-,-]\colon L\times L \to L$ and
a $p$-operation $(-)^{[p]}\colon L\to L$ (for details see
\cite{Jacobson}).  Suppose $L$ has a $k$-basis
$\{x_1,x_2,\ldots, x_n\}$.  By abuse of notation we let
$k\langle x_1,x_2,\ldots,x_n\rangle$ be the free algebra of the
indeterminants $\{x_1,x_2,\ldots,x_n\}$.  Then the universial
enveloping algebra $U^{[p]}(L)$ of $L$ is given by
\[U^{[p]}(L) = k\langle x_1,x_2,\ldots,x_n\rangle/\langle \{x_ix_j - x_jx_i -
  [x_i,x_j]\}_{i,j=1}^{n,n}, \{x_i^p - x_i^{[p]}\}_{i=1}^n\rangle.\]

Next we give the description of the algebras \textbf{C5}, \textbf{C6}
and \textbf{C15} as enveloping algebras of restricted Lie algebras all
of which are $3$-dimensional. 

\begin{prop}
\begin{enumerate}[\rm(a)]
\item Let $L$ be the $3$-dimen\-sional restricted $p$-nilpotent Lie
  algebra with basis $\{x_1,x_2,x_3\}$, the only non-zero bracket
  being $[x_1,x_2] = x_3$ and the $p$-operation given by
  $x_i^{[p]} = 0$ for $i=1,2,3$ for $p>2$.  Then algebra of type
  \textup{\textbf{C5}} is isomorphic to the enveloping algebra of the
  restricted Lie algebra of $L$.
\item Let $L$ be the $3$-dimensional restricted  Lie
  algebra with basis $\{x_1,x_2,x_3\}$ with the only non-zero bracket
  being $[x_1,x_2] = x_3$ and the $p$-operation given by
  $x_1^{[p]} = x_3$ and $x_i^{[p]} = 0$ for $i=2,3$, given in
  \cite[Theorem 2.1 (3/2) (b)]{SU}.  Then algebra of type
  \textup{\textbf{C6}} is isomorphic to the enveloping algebra of the
  restricted Lie algebra of $L$.
\item Let $L$ be the restricted Lie algebra of type
  $\mathfrak{sl}_2(k)$. The algebra of type \textup{\textbf{C15}} is
  isomorphic to the enveloping algebra of the restricted Lie algebra
  of $L$.
\end{enumerate}
\end{prop}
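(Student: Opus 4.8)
The plan is to prove all three isomorphisms in parallel by the same mechanism: in each case the algebra in the classification is presented as $k\langle x,y,z\rangle/I$ for a specific ideal $I$, and the restricted Lie algebra $L$ in the statement has a basis on which the bracket and $p$-operation have been chosen precisely so that the defining relations of $U^{[p]}(L)$ coincide (after the evident identification $x_1\mapsto x, x_2\mapsto y, x_3\mapsto z$, or whatever relabelling is convenient) with the generators of $I$. So the first step in each part is to write down the relations of $U^{[p]}(L)$ explicitly from the formula $U^{[p]}(L)=k\langle x_1,x_2,x_3\rangle/\langle\{x_ix_j-x_jx_i-[x_i,x_j]\},\{x_i^p-x_i^{[p]}\}\rangle$ and check term-by-term that they match the relations listed for \textbf{C5}, \textbf{C6}, \textbf{C15} in Theorem \ref{thm:classification}. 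For \textbf{C5} the bracket $[x_1,x_2]=x_3$ with all other brackets zero and $x_i^{[p]}=0$ gives exactly $[x,y]-z$, $[x,z]$, $[y,z]$, $x^p$, $y^p$, $z^p$; for \textbf{C6} the only change is $x_1^{[p]}=x_3$, which turns $x^p$ into $x^p-z$; for \textbf{C15} one checks that the relations $[x,y]-z$, $[x,z]-x$, $[y,z]+y$, $x^p$, $y^p$, $z^p-z$ are precisely those of $U^{[p]}(\mathfrak{sl}_2(k))$ in a Chevalley-type basis (with $h=z$, $e=x$, $f=y$ up to signs, the standard $p$-operation on $\mathfrak{sl}_2$ being $e^{[p]}=0$, $f^{[p]}=0$, $h^{[p]}=h$).

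Once the defining ideals are seen to agree, there is a surjective algebra homomorphism $U^{[p]}(L)\twoheadrightarrow \L$ (indeed an equality of presentations), so it only remains to observe that both sides have dimension $p^3$: the PBW theorem for restricted enveloping algebras gives $\dim_k U^{[p]}(L)=p^{\dim_k L}=p^3$, and the algebras \textbf{C5}, \textbf{C6}, \textbf{C15} are $p^3$-dimensional by construction in Theorem \ref{thm:classification}. A surjection between finite-dimensional vector spaces of equal dimension is an isomorphism, so we are done. I would phrase this as: "the relations listed are exactly the relations defining $U^{[p]}(L)$, and by the PBW theorem $\dim_k U^{[p]}(L)=p^3=\dim_k\L$, whence the natural surjection is an isomorphism."

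The only genuine subtlety — and the step I expect to need the most care — is part (c): one must exhibit an explicit basis of $\mathfrak{sl}_2(k)$ in which the stated \textbf{C15} relations literally appear, and in particular verify the $p$-operation. For the standard basis $e=\left(\begin{smallmatrix}0&1\\0&0\end{smallmatrix}\right)$, $f=\left(\begin{smallmatrix}0&0\\1&0\end{smallmatrix}\right)$, $h=\left(\begin{smallmatrix}1&0\\0&-1\end{smallmatrix}\right)$ one has $[h,e]=2e$, $[h,f]=-2f$, $[e,f]=h$, and $e^{[p]}=e^p=0$, $f^{[p]}=0$, $h^{[p]}=h^p=h$; matching this with $[x,z]-x=0$, $[y,z]+y=0$, $[x,y]-z=0$ forces a rescaling (e.g. $z=\tfrac12 h$ if $p>2$, or absorb the $2$ into $x,y$), and one should check this rescaling is invertible for $p>2$ — which is exactly why the hypothesis $p>2$ appears in the statement of \textbf{C15}. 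For $p=2$ the bracket relations degenerate and $\mathfrak{sl}_2$ behaves differently, so the restriction $p>2$ is essential and should be flagged. Parts (a) and (b) are routine once the $p$-operation is written out, since all the restricted Lie algebras there are $p$-nilpotent or nearly so and no rescaling is needed.

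Finally, I would remark that, as elsewhere in the paper, the base field $k$ need only be of characteristic $p$ (not algebraically closed) for the isomorphisms of (a) and (b), since the presentations and the PBW argument are insensitive to the field; for (c) the identification with $\mathfrak{sl}_2(k)$ likewise works over any field of characteristic $p>2$.
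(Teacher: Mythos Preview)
Your proposal is correct and follows essentially the same route as the paper: identify the defining relations of $U^{[p]}(L)$ with the presentation of the \textbf{C}-algebra and conclude the isomorphism. The only difference is in emphasis --- the paper additionally verifies in (a) and (c) that the claimed $p$-operation on $L$ really exists by computing $(\ad x_i)^p$ and invoking \cite[Chapter~V, Theorem~11]{Jacobson}, and for (c) carries out the change of basis explicitly as $h'=-\tfrac12 h$, $e'_{\pm}=\alpha e_+,\beta e_-$ with $\alpha\beta=-\tfrac12$ (note your suggested $z=\tfrac12 h$ has the wrong sign, though you flagged ``up to signs''); conversely, your PBW dimension-count is left implicit in the paper.
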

\begin{proof} 
  (a) The algebra of type \textbf{C5} is given by
  \[\L = k\langle x, y,z\rangle/\langle [x,y] - z, [x,z], [y,z], x^p,
  y^p,z^p\rangle.\]  
Let $L$ be the $3$-dimensional Lie algebra with basis
$\{x_1,x_2,x_3\}$ with the only non-zero bracket being $[x_1,x_2] =
x_3$.  We want to show that we can choose a zero $p$-operation to
obtain a restricted Lie algebra.  

We have that 
\[\ad x_1 = [-,x_1] = 
\left(\begin{smallmatrix} 
0 & 0  & 0\\
0 & 0  & 0\\
0 & -1 & 0
 \end{smallmatrix}\right)\quad \text{and}\quad (\ad x_1)^2 = 0,\]
\[\ad x_2 = [-,x_2] = 
\left(\begin{smallmatrix} 
0 & 0 & 0\\
0 & 0 & 0\\
1 & 0 & 0
 \end{smallmatrix}\right)\quad \text{and}\quad (\ad x_2)^2 = 0,\]
\[\ad x_3 = [-,x_3] = 
\left(\begin{smallmatrix} 
0 &   0  & 0\\
0 & 0 & 0\\
0 &   0  & 0
 \end{smallmatrix}\right)\quad \text{and}\quad \ad x_3= 0.
\]
By \cite[Chapter V, Theorem 11]{Jacobson} there is a $p$-operation
$(-)^{[p]}\colon L\to L$ such that $(x_i)^{[p]} = 0$ for $i=1,2,3$.
This shows that $L$ is the $3$-dimensional restricted Lie algebra, and
therefore $\L$ is the enveloping algebra of the $3$-dimensional
restricted Lie algebra $L$.

(b) The algebra of type \textbf{C6} is given by
\[\L = k\langle x, y,z\rangle/\langle [x,y] - z, [x,z], [y,z], x^p - z,
y^p,z^p\rangle\] 
for $p>2$.  Recall the $3$-dimensional restricted  Lie
algebra $L$ with basis $\{x_1,x_2,x_3\}$ with the only non-zero
bracket being $[x_1,x_2] = x_3$ and the $p$-operation given by
$x_1^{[p]} = x_3$ and $x_i^{[p]} = 0$ for $i=2,3$, given in
\cite[Theorem 2.1 (3/2) (b)]{SU}.  Then the algebra $\L$ is isomorphic
to the restricted enveloping algebra of the $3$-dimensional restricted
Lie algebra $L$.

(c) The algebra of type \textbf{C15} is given by
\[\L = k\langle x, y,z\rangle/\langle [x,y] - z, [x,z] - x, [y,z] + y,
x^p, y^p,z^p - z\rangle\] 
for $p>2$.  We shall prove that this is the enveloping algebra of the
$3$-dimensional restricted Lie of type $\mathfrak{sl}_s(k)$.  

Let $L = \mathfrak{sl}_2(k) = ke_+ \oplus kh \oplus ke_-$, where the
standard presentation usually is give as 
\[[h,e_\pm] = \pm 2 e_\pm,\quad [e_+,e_-] = h.\] 
By letting 
\[h' = -\frac{1}{2}h,\quad  e'_+ = \alpha e_+,\quad e'_- = \beta e_-,\]
we obtain the equations 
\[[h',e'_+] = -e_+,\quad [h',e'_-] = e'_-,\quad [e'_+,e'_-] = h',\]
whenever $\alpha\beta = -\frac{1}{2}$ in $k$. 

Now we investigate if there exists a $p$-operation on $L$.  We have that 
\[\ad e'_+ = [-,e'_+] = 
\left(\begin{smallmatrix} 
0 & 0  & -1\\
0 & 0  &  0\\
0 & -1 & 0
 \end{smallmatrix}\right)\quad \text{and}\quad (\ad e'_+)^3 = 0,\]
\[\ad e'_- = [-,e'_-] = 
\left(\begin{smallmatrix} 
0 & 0 & 0\\
0 & 0 & 1\\
1 & 0 & 0
 \end{smallmatrix}\right)\quad \text{and}\quad (\ad e'_-)^3 = 0,\]
\[\ad h' = [-,h'] = 
\left(\begin{smallmatrix} 
1 &   0  & 0\\
0 & -1  & 0\\
0 &   0  & 0
 \end{smallmatrix}\right)\quad \text{and}\quad (\ad h')^p = \left(\begin{smallmatrix} 
1^p &   0  & 0\\
0 & (-1)^p  & 0\\
0 &   0  & 0
 \end{smallmatrix}\right) = \ad h',
\]
since $p$ is odd.  By \cite[Chapter V, Theorem 11]{Jacobson} there is
a $p$-operation $(-)^{[p]}\colon L\to L$ such that
\[ (e'_\pm)^{[p]} = 0\quad \text{and}\quad (h')^{[p]} = h'.\] This
show that $L$ is the $3$-dimensional restricted Lie algebra of type
$\mathfrak{sl}_2(k)$, and therefore $\L$ is the enveloping algebra of
the $3$-dimensional restricted Lie algebra of type $\mathfrak{sl}_2(k)$. 
\end{proof}

\section{Coverings of local algebras}

In this section we find algebras in the classification of the finite
dimensional connected Hopf algebras of dimension $p^3$ which are
coverings of local algebras by a cyclic group.  We start off by
recalling the notion of a covering of a path algebra (for details see
\cite{Gr2}).  

Let $\L = kQ/I$ be an admissible quotient of the path algebra $kQ$
over a field $k$.  Let $G$ be a (finite) group, and let
$w\colon Q_1\to G$ be a \emph{weight function}, that is, just a
function $w\colon Q_1\to G$.  This is extended to a weight function on
all non-trivial paths by defining the \emph{weight of a path}
$p = a_na_{n-1}\cdots a_2a_1$ in $Q$ to be
\[\prod_{i=1}^n w(a_i) = w(a_1)w(a_2)\cdots w(a_{n-1})w(a_n),\]
and for a trivial path $p$ the weight is $w(p) = e$, the identity in
$G$. Assume that the relations in $I$ are homogeneous with respect to
the weight.  With these data we define a covering $\widetilde{Q}(w)$ of
$Q$ with the vertex set 
\[\widetilde{Q}(w)_0 = \{ (v,g)\mid v\in Q_0, g\in G\}\]
and the arrow set 
\[\widetilde{Q}(w)_1 = \{ (a,g)\colon (\frako(a),g)\to (\frakt(a),
  gw(a)) \mid a\in Q_1, g\in G\},\]
where $\frako(a)$ and $\frakt(a)$ denote the origin and the terminus
of the arrow $a$, respectively.  Define a map $\pi\colon
\widetilde{Q}(w)\to Q$ induced from letting 
\[\pi(v,g) = v\quad \text{and}\quad \pi(a,g) = a\]
for all vertices $v$ in $Q_0$, all arrows $a$ in $Q_1$ and all $g$ in
$G$.  It is straightforward to see that given an element $g$ in $G$
and a path $p$ in $Q$, there is a unique path
$\widetilde{p}_g = (p,g)\colon (\frako(p), g) \to (\frakt(p), gw(p))$
in $\widetilde{Q}(w)$ such that $\pi(\widetilde{p}_g) = p$.  This we
extend to a lifting from $kQ$ to $k\widetilde{Q}(w)$.  Hence, given
$g$ in $G$ and a weight homogeneous relation $\sigma$ in $I$, there is
a unique lifting $\widetilde{\sigma}_g$ of $\sigma$ to
$k\widetilde{Q}(w)$.  This is a uniform element, that is, all paths
occuring in $\widetilde{\sigma}_g$ start in the same vertex and end in
the same vertex.  Furthermore, if $I'$ is the ideal generated by the
liftings of a minimal generating set of $I$ and $\widetilde{I}(w)$ is
the set of liftings of elements in $I$, then $I' = \widetilde{I}(w)$
and $\widetilde{I}(w)$ is an ideal in $k\widetilde{Q}(w)$.  In
particular, there is a well-defined algebra map $\pi\colon
k\widetilde{Q}(w)/\widetilde{I}(w) \to kQ/I$.  

We give the following example which will occur in the main result of
this section.
\begin{example}\label{exam:covering}
  Let
  \[\L = k(\xymatrix{1\ar@(ul,dl)[]_y\ar@(ur,dr)[]^z})/\langle y^p, yz
  - zy, z^p\rangle,\] 
 and define $w\colon \{ y, z\}\to C_p = \langle g\rangle$ by letting
$w(y) = g$ and $w(z) = e$.  Let $Q\colon 
\xymatrix{1\ar@(ul,dl)[]_y\ar@(ur,dr)[]^z}$ and $I = \langle y^p, yz
  - zy, z^p\rangle$.  Then the covering
  $k\widetilde{Q}(w)/\widetilde{I}(w)$  is given by the quiver 
\[\xymatrix{
 & (1, e)\ar[r]_{(y,e)}\ar@(ur,ul)[]_{(z,e)} & (1,g)\ar[dr]_{(y,g)}\ar@(ur,ul)[]_{(z,g)} & \\
(1, g^{p-1})\ar[ur]_{(y, g^{p-1})}\ar@(ur,ul)[]_{(z,g^{p-1})} & & &
(1,g^2)\ar[d]_{(y,g^2)}\ar@(ur,ul)[]_{(z,g^2)} \\ 
(1,g^{p-2})\ar[u]_{(y, g^{p-2})}\ar@(dr,dl)[]^{(z,g^{p-2})} & & & (1,g^3)\ar[dl]\ar@(dr,dl)[]^{(z,g^3)} \\
& \ar[ul]\ar@{..}[r]  & & 
}\]
and relations
\begin{gather}
(y, g^{i+p-1})\cdots (y, g^{i+2})(y, g^{i+1})(y,g^i),\notag\\
(z, g^i)^p,\notag\\
(y,g^i)(z,g^i) - (z,g^{i+1})(y,g^i),\notag
\end{gather} 
for all $i$ in $\{0,1,\ldots, p - 1\}$. 
\end{example}

Now we are ready to give the algebras in the classification of
  finite dimensional connected Hopf algebras of dimension $p^3$ which
  are coverings of a local algebra by a cyclic group.

\begin{prop}
Let $k$ be a field of characteristic $p$.  Let 
\[\L_1 = k\langle y, z\rangle/\langle y^p, yz - zy, z^p\rangle\quad
  \text{and}\quad \L_2 = k\langle 
y, z\rangle/\langle y^p, yz - zy - y^2, z^p\rangle.\]  
Consider the group $G = \langle\sigma\rangle = C_p$. 
\begin{enumerate}[\rm(a)]
\item The algebras of type \textup{\textbf{B1}} and \textup{\textbf{C11}} are
  isomorphic, and they are isomorphic to a covering of the local
  algebra $\L_1$ with the weight function $w\colon \{y,z\}\to G$ given
  by $w(y) = \sigma$ and $w(z) = e$.   
\item The algebra of type \textup{\textbf{B3}} is a covering of the
  local algebra $\L_2$ with the weight function
  $w(y) = w(z) = \sigma$.
\item Assume that $k$ contains the field of order $p^2$ if $p>2$.   The
  algebra of type \textup{\textbf{C16}} is a covering of the local
  algebra $\L_1$ with a weight function $w$ of the form
  $w(y) = \sigma^{-1}$ and $w(z) = \sigma^{-a}$ for some $a$
  (precisely, $a$ such that $\lambda^{-1} = a\lambda$ when $\lambda$
  is the defining parameter occuring in an algebra of type
  \textup{\textbf{C16}}). 
\end{enumerate}
\end{prop}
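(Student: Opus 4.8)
The plan is to handle the three cases separately, each by producing an explicit presentation of the given Hopf algebra as a path algebra quotient $kQ/I$ together with a weight function $w$, and then identifying the resulting covering with the stated Hopf algebra. For each case I would first locate inside the Hopf algebra $H$ a central (or normal) subalgebra isomorphic to $k[x]/\langle x^p-x\rangle\cong k^p$, coming from the group-like-ish element $x$ with $x^p=x$; this yields $p$ orthogonal primitive idempotents $e_0,\dots,e_{p-1}$ with $xe_i=ie_i$, exactly as in Remark \ref{rem:idempotents}. These idempotents become the vertices of $Q$, which will be $\widetilde{\mathbb{A}}_{p-1}$ decorated with loops. The arrows are obtained from the images of $y$ and $z$: since $[x,y]=y$ (in cases (a), (c), suitably normalized) one gets $x\cdot ye_i=(i+1)ye_i$, hence $ye_i=e_{i+1}ye_i$, so $ye_i$ is an arrow $e_i\to e_{i+1}$; the element $z$ (commuting with $x$, at least after normalization) gives a loop $ze_i$ at each vertex. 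The weight function then records how each arrow shifts the $G=C_p$-index: $w(y)=\sigma$ because $y$ moves $e_i$ to $e_{i+1}$, and $w(z)=e$ because $z$ fixes the index. Matching the relations of $H$ against the lifted relations (as in Example \ref{exam:covering}) gives the isomorphism $H\cong k\widetilde{Q}(w)/\widetilde{I}(w)$, and collapsing the covering via $\pi$ identifies the local algebra $\L_1$ (or $\L_2$).

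For part (a), I would first prove that \textbf{B1} and \textbf{C11} are isomorphic as algebras — they are both $k\langle x,y,z\rangle/\langle[x,y]-y,[x,z],[y,z],x^p-x,y^p,z^p\rangle$ once one notes that in \textbf{B1} the generator playing the role of $z$ already commutes with everything, so the two presentations literally coincide. Then with $e_i$ as above, $\L e_i$ has basis $\{y^s z^t e_i\}$, $y$ shifts $e_i\to e_{i+1}$, and $z$ is a loop commuting with $y$; the relations $y^p=z^p=0$, $yz=zy$ lift to precisely the relations displayed in Example \ref{exam:covering}. Hence \textbf{B1}$\cong k\widetilde{Q}(w)/\widetilde{I}(w)$ with $w(y)=\sigma$, $w(z)=e$, and $\pi$ collapses this onto $\L_1=k\langle y,z\rangle/\langle y^p,yz-zy,z^p\rangle$. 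For part (b), the new feature is the relation $[y,z]=y^2$ (i.e. $yz-zy=y^2$) and the antipode twist; here I expect $z$ to no longer commute with $x$ in the naive presentation, so one must first change variables to arrange $[x,y]=y$ and $[x,z]=z$ or similar, after which both $y$ and $z$ shift the index by one, forcing $w(y)=w(z)=\sigma$; the relation $yz-zy-y^2$ lifts uniformly (all three terms go from vertex $(1,g^i)$ to $(1,g^{i+2})$) and collapses to $\L_2$. For part (c), starting from $C(\lambda,\delta)=k\langle x,y,z\rangle/\langle[x,y],[x,z]-\lambda x,[y,z]-\lambda^{-1}y,x^p,y^p,z^p-\delta z\rangle$, the central element is $z$ (with $z^p=\delta z$, $\delta=\pm1$): when $\delta=1$ we get $k^p$ directly, when $\delta=-1$ we pass to $k[z]/\langle z^p+z\rangle$, which splits over a field containing $\mathbb{F}_{p^2}$ (hence the hypothesis on $k$), again yielding $p$ idempotents $g_j$ with $zg_j=\mu_j g_j$ where the $\mu_j$ run over the roots of $u^p-\delta u$. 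Then $[x,z]=\lambda x$ and $[y,z]=\lambda^{-1}y$ show $x$ and $y$ shift the $z$-index, and one reads off $w(y)=\sigma^{-1}$, $w(z)=\sigma^{-a}$ with $a$ determined by $\lambda^{-1}=a\lambda$; the remaining relations $[x,y]=0$, $x^p=0$, $y^p=0$ lift to the $\L_1$-relations, so $\pi$ collapses the covering onto $\L_1$.

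The main obstacle I anticipate is part (c): getting the bookkeeping of the weight function exactly right when the "vertex-indexing" element is $z$ rather than an $x$ with $x^p=x$, especially in the $\delta=-1$ case where the eigenvalues of $z$ on the idempotents are the $p$-th roots of $-u$ in $\mathbb{F}_{p^2}$ rather than elements of $\mathbb{F}_p$ — one has to check that the additive structure of this set of eigenvalues is still a torsor under a cyclic group $C_p$ so that "shift by one" makes sense, and then verify that conjugation by $z$ permutes the idempotents $g_j$ cyclically with $x\mapsto$ shift governed by $\lambda$ and $y\mapsto$ shift governed by $\lambda^{-1}$, which is where the exponent $-a$ (with $a\lambda^2=1$) comes from. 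A secondary, more routine difficulty is part (b): determining the correct change of variables making $z$ behave well with respect to $x$ given the noncommutativity $[y,z]=y^2$ and the fact that $z^p=0$ but the antipode is only $S^{2p}=1$; this requires care but is a finite linear-algebra computation. In all cases, once the idempotents, arrows, and weights are pinned down, checking that the lifted relations match and that $\dim_k$ agrees with $p^3$ is straightforward, so I would keep those verifications brief and concentrate the exposition on the change of variables and the weight assignment.
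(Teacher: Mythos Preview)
Your approach is essentially the same as the paper's: in each case one isolates the semisimple subalgebra generated by the element satisfying $t^p=\pm t$, reads off the primitive idempotents, uses the commutation relations to see how the remaining two generators shift the idempotents, and matches the resulting quiver-with-relations against the covering of the local algebra.

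Two small corrections will save you work. In part (b) you anticipate needing a change of variables, but the presentation of \textbf{B3} already has $[x,y]=y$ and $[x,z]=z$, so both $y$ and $z$ shift the idempotent index by one with no preprocessing; the antipode plays no role whatsoever in the algebra structure and should not enter the argument. In part (c) your worry about the $\delta=-1$ case is resolved more cleanly than you suggest: the roots of $z^p-\delta z$ form an additive subgroup of $k$ of order $p$, and since $\lambda$ is a nonzero root this subgroup is exactly $\{0,\lambda,2\lambda,\dots,(p-1)\lambda\}$; with idempotents $e_i$ satisfying $ze_i=i\lambda e_i$ the relations $zx=xz-\lambda x$ and $zy=yz-\lambda^{-1}y$ give $xe_i=e_{i-1}xe_i$ and $ye_i=e_{i-a}ye_i$ directly, so no separate torsor argument is needed.
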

\begin{proof}
  (a) The algebra of type \textbf{B1} and \textbf{C11} are given as
  \[\L = k\langle x, y, z\rangle/\langle [x,y] - y, [x,z], [y,z], x^p -
  x, y^p, z^p\rangle.\] 
Any element in $\L$ can be written as a linear combination of elements
in the set $\{ x^ry^sz^t\}_{r,s,t=0}^{p-1,p-1,p-1}$.  Let
$\mathfrak{a} = \langle y, z\rangle$ in $\L$.
Since the relations are homogeneous in $y$ and $z$, it follows from
the above that $\mathfrak{a}$ is a nilpotent ideal in $\L$.
Furthermore it is easy to see that
$\L/\mathfrak{a} \simeq k[x]/\langle x^p - x\rangle \simeq k^p$.  We
infer that $\rad\L = \mathfrak{a}$.  By the above comments we have
that $\rad^2\L$ has basis
$\{ x^ry^sz^t\}_{r=0, s+t\geqslant 2}^{p-1, p-1,p-1}$.  The
$\rad\L/\rad^2\L$ has a basis given by the residue classes of the
elements $\{ x^ry, x^rz\}_{r=0}^{p-1}$.  Hence $\L$ is isomorphic to a
quotient of a path algebra $kQ$ over $k$, where $Q$ has $p$ vertices
\[\left\{ v_\alpha = \frac{\prod_{\beta\in \mathbb{Z}_p\setminus
      \{\alpha\}} (x-\beta)}{\prod_{\beta\in \mathbb{Z}_p\setminus
      \{\alpha\}} (\alpha-\beta)}\right\}_{\alpha\in\mathbb{Z}_p}\]
and $2p$ arrows spanned by $\{ x^ry, x^rz\}_{r=0}^{p-1}$.  The linear
span of $\{x^i\}_{i\in \mathbb{Z}_p}$ is equal to the linear span of
$\{v_\alpha\}_{\alpha \in \mathbb{Z}_p}$.  Hence, a basis for
$\rad\L/\rad^2\L$ is $\{v_\alpha y, v_\alpha
z\}_{\alpha\in\mathbb{Z}_p}$.  Since $z$ is in $Z(\L)$, we
have that $v_\alpha z = zv_\alpha$.  
By Remark \ref{rem:idempotents} (2) the element $v_\alpha y$ give rise to an arrow from
vertex $v_{\alpha}\to v_{\alpha + 1}$.   Hence the quiver $Q$ of $\L$ is  
\[\xymatrix{
 & v_0\ar[r]^{y_0}\ar@(ur,ul)[]_{z_0} & v_1\ar[dr]^{y_1}\ar@(ur,ul)[]_{z_1} & \\
v_{p-1}\ar[ur]^{y_{p-1}}\ar@(ul,dl)[]_{z_{p-1}} & & & v_2\ar[d]^{y_2}\ar@(dr,ur)[]_{z_2} \\
v_{p-2}\ar[u]^{y_{p-2}}\ar@(ul,dl)[]_{z_{p-2}} & & & v_3\ar[dl]\ar@(dr,ur)[]_{z_3} \\
& \ar[ul]\ar@{..}[r]  & & 
}\]
The relations are $\{ y_{i+p-1}\cdots y_{i+2}y_{i+1}y_i, z_i^p, y_iz_i -
z_{i+1}y_i\}_{i=0}^{p-1}$.  Hence 
\[\L\simeq kQ/\langle \{y_{i+p-1}\cdots y_{i+2}y_{i+1}y_i, z_i^p,
  y_iz_i - z_{i+1}y_i\}_{i=0}^{p-1}\rangle.\] 
Therefore $\L$ is isomorphic to the covering of the local algebra
$\L_1$ with the weight function
$w\colon \{y,z\}\to C_p = \langle\sigma\rangle$ given by
$w(y) = \sigma$ and $w(z) = e$ (see Example \ref{exam:covering}).

(b) The algebra of type \textbf{B3} is given by
\[\L = k\langle x, y, z\rangle/\langle [x,y]-y,[x,z]-z, [y,z]-y^2,
x^p-x, y^p, z^p\rangle.\]  
The relations $[x,y]-y$, $[x,z]-z$ and $[y,z]-y^2$ imply that $x$'s
can be moved across $y$'s, $x$'s can be moved across $z$'s and that
$z$'s can be moved across $y$'s.  From this we obtain that
$\langle y, z\rangle$ is a nilpotent ideal in
$\L$.  It is easy to see that
$\L/\langle y,z\rangle \simeq k[x]/\langle
x^p-x\rangle\simeq k^p$.  Hence
$\langle y, z\rangle$ is the radical in $\L$,
and $\L\simeq kQ/I$, where $Q$ is a quiver with $p$ vertices given by
\[\left\{ v_\alpha = \frac{\prod_{\beta\in \mathbb{Z}_p\setminus
  \{\alpha\}} (x-\beta)}{\prod_{\beta\in \mathbb{Z}_p\setminus
  \{\alpha\}} (\alpha-\beta)}\right\}_{\alpha\in\mathbb{Z}_p}.\] 
The arrows are given by  $v_\alpha yv_\beta$ and $v_\alpha zv_\beta$ which are non-zero
when $\alpha$ and $\beta$ runs through $\mathbb{Z}_p$.  
Using Remark \ref{rem:idempotents} we get that $v_\alpha y = y v_{\alpha -1}$ and $v_\alpha z = z
v_{\alpha -1}$ for all $\alpha$ in $\mathbb{Z}_p$.  Hence the quiver
is
\[\xymatrix{
 & v_0\ar@<0.5ex>[r]^{y_0}\ar@<-0.5ex>[r]_{z_0} & v_1\ar@<0.5ex>[dr]^{y_1}\ar@<-0.5ex>[dr]_{z_1} & \\
v_{p-1}\ar@<0.5ex>[ur]^{y_{p-1}}\ar@<-0.5ex>[ur]_{z_{p-1}} & & & v_2\ar@<0.5ex>[d]^{y_2}\ar@<-0.5ex>[d]_{z_2} \\
v_{p-2}\ar@<0.5ex>[u]^{y_{p-2}}\ar@<-0.5ex>[u]_{z_{p-2}} & & & v_3\ar@<0.5ex>[dl]^{y_3}\ar@<-0.5ex>[dl]_{z_3} \\
& \ar@<0.5ex>[ul]\ar@<-0.5ex>[ul]\ar@{..}[r]  & & 
}\]
The relations are
\[\{y_{i+1}z_i - z_{i+1}y_i - y_{i+1}y_i, y_{i+p-1}y_{i+p-2}\cdots
y_{i+1}y_i, z_{i+p-1}z_{i+p-2}\cdots
z_{i+1}z_i\}_{i\in\mathbb{Z}_p}.\]
Therefore $\L$ is a covering of the local algebra $\L_2$ with the
weight function $w(y) = w(z) = \sigma$.

(c) The algebra of type \textbf{C16} is given as
\[\L = C(\lambda,\delta) = k\langle x, y,z\rangle/\langle [x,y], [x,z] -
\lambda x, [y,z] - \lambda^{-1}y, x^p, y^p, z^p - \delta z\rangle\] 
for some $\lambda\in k^\times$ such that
$\delta = \lambda^{p-1} = \pm 1$.  Any element in $\L$ can be written
as a linear combination of elements in the set
$\{ x^ry^sz^t\}_{r,s,t=0}^{p-1,p-1,p-1}$ of $p^3$ elements, hence it
is a $k$-basis for $\L$.  Let
$\mathfrak{a} = \langle x, y\rangle$ in $\L$.
Since the relations are homogeneous in $x$ and $y$, it follows from
the above that $\mathfrak{a}$ is a nilpotent ideal in $\L$.
Furthermore it is easy to see that
$\L/\mathfrak{a} \simeq k[z]/\langle z^p - \delta z\rangle \simeq k^p$
(note that $\lambda$ lies in the field of order $p^2$ if $\delta =
-1\neq 1$).  We infer 
that $\rad\L = \mathfrak{a}$.  By the above comments we have that
$\rad\L$ has basis
$\{ x^ry^sz^t\}_{r+s\geqslant 1, t=0}^{p-1, p-1,p-1}$ (dimension
$p^3-p$) and $\rad^2\L$ has basis
$\{ x^ry^sz^t\}_{r+s\geqslant 2, t=0}^{p-1, p-1,p-1}$ (dimension
$p^3-3p$).  The vector space $\rad\L/\rad^2\L$ has a basis given by
the residue classes of the elements $\{xz^t, yz^t\}_{t=0}^{p-1}$.

Consider the map $\varphi\colon k[u]\to \L$ given by $\varphi(u) =
z$.  Then $\Ker \varphi = \langle u^p - \delta u\rangle$,
so that $k[z]/\langle z^p - \delta z\rangle$ is a subalgebra of $\L$.
This is a commutative semisimple algebra, since the polynomial $z^p -
\delta z$ is separable. 

Note that the set of roots of $z^p-\delta z$ is closed under addition
and that $0$ is a root, so it is an additive subgroup of order $p$ of
the field.  We take $\lambda$ as in the definition of the algebra.
This is a non-zero root of the polynomial $z^p-\delta z$, and with
this, the set of roots is precisely
\[\{ 0, \lambda, 2\lambda, \ldots, (p-1)\lambda\}.\] 
Since $\lambda^{-1}$ also is a root, there is a unique integer $a$
with $1\leq a\leq p-1$ such that $\lambda^{-1} = a\lambda$.
 
Next we find a set of complete orthogonal idempotent in $\L$.  We have
fixed the roots of $z^p-\delta z$ above.  With this, we have a set of
complete orthogonal idempotents $\{e_0, e_1, \ldots, e_{p-1}\}$ of
$k[z]/\langle z^p - \delta z\rangle$ such that
\[ze_i= i\lambda\cdot e_i.\] 
(for a formula take a variation of Remark
\ref{rem:idempotents}, but we don't need it).  So $\L = 
\oplus_{i=0}^{p-1} \L e_i$ as a left module. 

The set $\{x^uy^ve_i\}_{u,v=0}^{p-1,p-1}$ a $k$-basis for $\L e_i$ (recall
that $x, y$ commute).  In particular $xe_i$ and $ye_i$ are both
non-zero and they generate the radical of $\L e_i$.

We claim that $xe_i$ and $ye_i$ are eigenvectors for left
multiplication with $z$: To see this, we use the relations
\[zx = xz  - \lambda x\quad\text{and}\quad zy = yz  -  \lambda^{-1}y.\]
Therefore  
\[zxe_i = (xz - \lambda x)e_i =  x(i\lambda e_i)   - \lambda(xe_i) =
(i - 1)\lambda (xe_i)\] 
and the eigenvalue is $(i -1)\lambda$.  This implies that $xe_i =
e_{i -1}xe_i$, by the Remark \ref{rem:idempotents}. 
Similarly
\[zye_i = (yz -  \lambda^{-1}y)e_i = (i\lambda - a\lambda)ye_i\]
and the eigenvalue is $(i - a)\lambda$. So $ye_i = e_{i - a}ye_i$. 

As we have found a basis for the radical modulo the radical square, we
can find the quiver $Q$ of $\L$.  We take arrows $\alpha_i=xe_i$ and
$\beta_i=ye_i$ starting at $i$.  By the above, $\alpha_i$ ends at
vertex $i - 1$ and $\beta_i$ ends at vertex $i - a$, where $\lambda^{-1} =
a\lambda$.  It follows from the above that
$\alpha_{i - (p-1)}\cdots\alpha_{i -1}\alpha_i$, $\beta_{i - (p-1)a}\cdots
\beta_{i - a}\beta_i$ and $\beta_{i -1}\alpha_i - \alpha_{i - a}\beta_i$
are relations.  It is easy to see that $kQ/\langle \{\alpha_{i - (p-1)}\cdots\alpha_{i -1}\alpha_i, \beta_{i  - (p-1)a}\cdots
\beta_{i - a}\beta_i,\beta_{i  -1}\alpha_i -
\alpha_{i - a}\beta_i\}_{i=0}^{p-1}\rangle$ has dimension $p^3$, hence
\[\L\simeq kQ/\langle \{\alpha_{i  -(p-1)}\cdots\alpha_{i - 1}\alpha_i, \beta_{i - (p-1)a}\cdots
\beta_{i  -a}\beta_i,\beta_{i -1}\alpha_i -
\alpha_{i  -a}\beta_i\}_{i=0}^{p-1}\rangle.\]
Therefore $\L$ is isomorphic to the covering of the local algebra
$\L_1$ with weight function $w$ of the form $w(y) = \sigma^{-1}$ and
$w(z) = \sigma^{-a}$ for $C_p = \langle\sigma\rangle$.
\end{proof}

\section{Other local algebras}

In this section we discuss the algebras which we cannot classify as we
have done with all the other algebras.  The only class of algebras
left are the algebras of type \textbf{A5}.  Here it is only in
characteristic $2$ that we can identify this algebra.

\begin{prop}
For $p = 2$ the algebra of type \textup{\textbf{A5}} is isomorphic to
the semidihedral algebra of dimension $8$ labelled as \textup{Alg III.1
  (d)} in \textup{\cite[page 298]{E-1428}}. 
\end{prop}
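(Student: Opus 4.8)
The statement is an identification of a specific 8-dimensional local algebra with a known algebra from the literature, so the strategy will be to compute a presentation of the algebra of type \textbf{A5} for $p=2$ by quiver and relations (there is a single vertex, so this means generators of the radical and relations among them) and match it against the defining presentation of the semidihedral algebra of dimension 8 in \cite[page 298]{E-1428}. First I would write down the algebra explicitly: for $p=2$,
\[
\L = k\langle x,y,z\rangle/\langle x^2,\ y^2,\ [x,y],\ [x,z],\ [y,z]-x,\ z^2+xy\rangle .
\]
From the relations, $x$ is central (it commutes with $y$, with $z$, and with itself), $xy=yx$, and $zy=yz+x$, $zx=xz$. I would first produce a $k$-basis: every monomial can be rewritten, pushing $x$'s to the left and using $z^2=xy$ and $zy = yz+x$, into a normal form involving $x^ay^bz^c$ with $a,b\in\{0,1\}$ and $c\in\{0,1\}$ — except one must be careful because $z^2=xy$ is not zero. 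I expect the basis to be $\{1,\ z,\ y,\ zy,\ x,\ xz,\ xy,\ xyz\}$ (eight elements), confirming $\dim_k\L = 8$ and that $\L$ is local with radical $\langle x,y,z\rangle$ (the quotient by the radical is $k$, since modulo $x,y,z$ the algebra is $k$).

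Next I would locate the algebra in the stable/Loewy structure. I would compute $\rad\L$, $\rad^2\L$, $\rad^3\L$, etc.: $\rad\L/\rad^2\L$ should be 2-dimensional, spanned by the images of $y$ and $z$, so the quiver has one vertex and two loops, and $\L$ is a quotient of $k\langle Y,Z\rangle$. The relations defining the semidihedral algebra of dimension $8$ in \cite[page 298]{E-1428} are of the form (up to the standard normalization) something like $Y^2 = (ZY)Z$ or $Y^2 - ZYZ$, $(YZ)^2 = (ZY)^2$, $YZY = 0$ (I would check the exact relations in \cite{E-1428}); the point is that there are specific monomial-type and commutator-type relations in two loops whose quotient has dimension $8$ with Loewy length $4$. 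So the crux is: choose the right generators of the radical. Setting $Y := y$ and $Z := z$ may not directly give the literature's relations; I anticipate needing a change of variable (for example replacing $z$ by $z + (\text{something in the radical square})$, or swapping/combining $y$ and $z$) to bring the relations $z^2 = xy$ and $[y,z]=x$ into the exact normal form of Alg III.1 (d). Since $x = [y,z] = yz - zy = yz+zy$ (characteristic $2$) lies in $\rad^2\L$, I would substitute $x = yz+zy$ everywhere and eliminate $x$ as a generator, obtaining a presentation purely in $y,z$: the relations become $y^2 = 0$ (or whatever it reduces to), $z^2 = y(yz+zy) = yzy + y z y$? — one must recompute carefully using $y^2=0$ — and $[x,y]=[x,z]=0$ becoming higher relations automatically satisfied. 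This reduction is the main computational content.

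The main obstacle will be matching conventions: verifying that after eliminating $x$ and after a suitable (possibly scalar or affine) change of the two loop-generators, the resulting relations coincide on the nose with the relations listed for the semidihedral algebra of dimension 8 (Alg III.1 (d)) in \cite{E-1428}, including checking that the dimension is $8$ and the Cartan/Loewy data agree (Loewy length $4$, $\dim\rad^i\L = 8,4,2,1$ for $i=0,1,2,3$ say — I would verify the exact profile). Once the presentation matches that of Alg III.1 (d) term by term, the isomorphism is immediate since both are admissible quotients of $k\langle Y,Z\rangle$ by the same ideal. I would close by remarking that this identification also re-derives, via \cite{E-1428}, that $\L$ is a symmetric algebra, consistent with part (b) of the earlier Proposition (\textbf{A5} local, hence symmetric by Corollary \ref{cor:nakauto}(c)).
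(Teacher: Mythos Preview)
Your approach is essentially the same as the paper's: eliminate the central element $x=[y,z]$ to obtain a two-generator presentation in $y,z$, then match the resulting relations against those of Alg~III.1~(d). One concrete correction: the Loewy length is $5$, not $4$; after eliminating $x$ (using $z^2=xy=(yz+zy)y=yzy$ since $y^2=0$) the relations are $y^2=0$, $z^2=yzy$, $(yz)^2=(zy)^2$, and the radical layers have dimensions $1,2,2,2,1$, with bases $\{1\}$, $\{y,z\}$, $\{yz,zy\}$, $\{yzy,zyz\}$, $\{yzyz\}$.
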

\begin{proof}
For $p=2$ the algebra of type \textbf{A5} is given as
\[\L = k\langle x,y,z\rangle / \langle x^2, y^2, [x,y], [x,z], [y, z] -
x, z^2 + xy\rangle.\]
We observe that $x$ is central, and also is not a generator. We substitute $x=[y,z]$,  this is central, and the
other  
relations translate to 
\[ [y,z]^2, y^2,  z^2 + [y,z]y.
\]
Noting that char$(k)=2$, the last two relations mean that in the algebra $y^2=0$ and $z^2 = yzy$. With these, the first relation is
equivalent to $(yz)^2 = (zy)^2$ in the algebra. Moreover if $y^2=0$ and $z^2 = yzy$  then it follows
that $[y,z]$ is central.  We get that  
\[\L \simeq k\langle y,z\rangle/\langle (yz)^2  - (zy)^2, y^2, z^2 -
yzy\rangle,\]
and these relations imply that $(zy)^2z = 0$. 

The radical layers of this algebra have basis $\{1\}$, $\{y, z\}$,
$\{yz, zy\}$, $\{yzy, zyz\},$ $\{yzyz\}$, hence of dimensions $\{1, 2,
2, 2, 1\}$ and structure of the projective is
\[\xymatrix{
& 1\ar[dl]^y\ar[dr]^z & \\
y\ar[d]^z & & z\ar[ddll]^z\ar[d]^y\\
yz\ar[d]^y & & zy\ar[d]^y\\
yzy\ar[dr]^z & & zyz\ar[dl]^y\\
& yzyz &}\]
This algebra is isomorphic to a semidihedral algebra of dimension $8$
labelled as Alg III.1 (d) in \cite[page 298]{E-1428}.
\end{proof}

\begin{remark}
  The algebra $\L$ of type \textbf{A5} for $p = 2$ is a local algebra
  and not semisimple.  It is not a selfinjective Nakayama algebra,
  since the radical of the indecomposable projective is generated by
  two elements.  We don't know if it is an enveloping algebra of a
  restricted Lie algebra.  It is not a non-trivial covering of any
  other algebra, since it is local.

  We also claim that it is not a group algebra as we argue as
  follows. If $\L$ is isomorphic to a group algebra $kG$, then
  $|G| = 8$.  The algebra is not commutative, and the only non-abelian
  groups of order $8$ are the dihedral group $D_8$ and the quaternion
  group $Q_8$.  If $G = D_8$, then $kG$ is isomorphic to the algebra
  labelled III.1 (c) in \cite[page 294]{E-1428}, which is not isomorhpic to
  $\L$.  If $G = Q_8$, then $kG$ is isomorphic to the algebra labelled
  III.1 (e) or (e'), which again is not isomorphic to $\L$.
\end{remark}

\begin{remark}
For $p > 2$ the algebra of type \textbf{A5} is given as
\[\L(\beta) = k\langle x,y,z\rangle/\langle x^p, y^p, [x,y], [x,z], [y,
z] - x, z^p + x^{p - 1}y - \beta x\rangle.\]
We can see that
the ideal $\langle x, y, z\rangle$ in $\L$ is the radical of $\L$.
Furthermore, $\L$ is generated as an algebra by $\{y,z\}$, and direct
computations show that
\[\L \simeq k\langle y,z\rangle/\langle [y,z]^p, y^p, 2yzy -
zy^2 -y^2z, yz^2 + z^2y - 2zyz, z^p + (yz-zy)^{p-1}y -
\beta(yz-zy)\rangle.\]

Let $\beta \neq 0$.  Then one can show directly that the dimension of
the radical quotients are 
\[1,2, 3,\ldots, p-1, \underbrace{p, p, \ldots, p}_{p^2 - p + 1},
  p-1,\ldots, 3,2,1.\]
If $\L(\beta) \simeq kG$, then $G$ is a non-abelian group of order
$p^3$.  It is well-known that up to isomorphism there are two
non-abelian groups of order $p^3$, namely $C_{p^2}\rtimes C_p$ and
$(C_p\times C_p)\rtimes C_p$.  The theorem of Jennings determines explicitly
in terms of the group data a basis of $kG$ for $G$ a group, compatible
with the radical series.  

The group algebra of $G = C_{p^2}\rtimes C_p$ has the same dimensions of the
radical quotients as $\L(\beta)$.  If there exists an isomorphism
$\psi\colon \L(\beta)\to kG$ the element $\psi([y,z])$ must be in the
center.  One can show using a presentation of $kG$ that this is not
possible. 

Let $\beta = 0$.  Then the algebra $\L(0)$ has the same radical
quotients as the group algebra $k((C_p\times C_p)\rtimes C_p)$ and similarly
the algebras are not isomorphic.
\end{remark}

\section{Eckmann-Shapiro Lemma and $\Ext$-algebras}
This section is devoted to studying a situation for an adjoint pair of
functors where Noetherianity of $\Ext$-algebras can be transferred
from one category to the other via the first or the second functor in
the adjunction.  This is the Eckmann-Shapiro Lemma in the setting
  of abelian categories.

Let $\C$ and $\D$ be two abelian categories with enough projective
and enough injective objects.  Let $F\colon \C\to \D$ and $G\colon
\D\to\C$ be a pair $(F,G)$ of adjoint functors, and denote the
adjunction by
\[\psi = \psi_{C,D}\colon \Hom_\D(F(C),D) \to \Hom_\C(C,G(D))\]
for all $C$ in $\C$ and $D$ in $\D$.  With this setup we are ready to
formulate the transfer of Noetherianity of $\Ext$-algebras across the
adjunction using the functor $F$. 

\begin{prop}\label{prop:adjointNoetherian}
Let $(F,G)$ be an adjoint pair of functors as above, and assume that
$F$ is exact and preserves projective objects. 
\begin{enumerate}[\rm(a)]
\item The adjunction induces a functorial isomorphism 
\[\Ext^i_\D(F(C),D) \simeq \Ext^i_\C(C,G(D))\]
for all $i\geq 1$ and for all $C$ in $\C$ and $D$ in $\D$.  We also
denote this isomorphism by $\psi$. 
\item For every element $\eta$ in $\Ext^m_\C(C',C)$ and $\theta$ in
  $\Ext^n_\C(C,G(D))$ the following equality holds in
  $\Ext^{m+n}_\D(F(C'),D)$: 
\[\psi^{-1}(\theta)\cdot F(\eta) = \psi^{-1}(\theta\cdot \eta).\]
\item Let $n$ be a non-negative integer.  If
  $\Ext_\C^{\geq n}(C,G(D))$ is a finitely generated right
  $\Ext^*_\C(C,C)$-module, then $\Ext_\D^{\geq n}(F(C),D)$ is a
  finitely generated right $\Ext_\D^*(F(C),F(C))$-module.
\item If $\Ext^*_\C(C,C)$ is a right Noetherian ring and $\Ext^*_\C(C,G(F(C)))$
  is a finitely generated right $\Ext^*_\C(C,C)$-module, then
  $\Ext^*_\D(F(C),F(C))$ is a right Noetherian ring. 
\end{enumerate}
\end{prop}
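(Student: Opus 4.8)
The plan is to establish the four parts in order, since each feeds into the next. For part (a), I would use the hypothesis that $F$ is exact and preserves projectives: take a projective resolution $P_\bullet \to C$ in $\C$; then $F(P_\bullet) \to F(C)$ is a projective resolution in $\D$. Now $\Ext^i_\D(F(C),D)$ is computed as the cohomology of $\Hom_\D(F(P_\bullet),D)$, and by the adjunction isomorphism $\psi$ this complex is isomorphic (naturally in each degree) to $\Hom_\C(P_\bullet, G(D))$, whose cohomology is $\Ext^i_\C(C,G(D))$. Naturality of $\psi$ in both variables gives functoriality of the resulting isomorphism; I would remark that for $i\geq 1$ one does not need to worry about the degree-zero term, and in fact the isomorphism holds for all $i\geq 0$ once resolutions are fixed.

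For part (b), the point is to check compatibility of the isomorphism $\psi$ with the Yoneda (composition) product. I would represent $\eta\in\Ext^m_\C(C',C)$ by a chain map $\eta_\bullet\colon P_\bullet(C') \to P_\bullet(C)[m]$ between projective resolutions, and $\theta\in\Ext^n_\C(C,G(D))$ by a cocycle in $\Hom_\C(P_n(C),G(D))$; then $\theta\cdot\eta$ is represented by $\theta\circ\eta_m$. Applying $F$ to the chain map $\eta_\bullet$ gives a chain map representing $F(\eta)\in\Ext^m_\D(F(C'),F(C))$ between the resolutions $F(P_\bullet)$. Unwinding the definition of $\psi^{-1}$ (which just pre-composes with the unit or uses the mate under the adjunction) and using that $\psi$ is natural in $C$, one sees both sides of the claimed identity are represented by the same cocycle in $\Hom_\D(F(P_{m+n}(C')),D)$. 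This is the one genuinely fiddly bookkeeping step, but it is formal once the resolutions and representatives are pinned down.

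For part (c), suppose $\Ext^{\geq n}_\C(C,G(D))$ is finitely generated over $\Ext^*_\C(C,C)$, with homogeneous generators $\theta_1,\dots,\theta_r$ in degrees $\geq n$. I claim $\psi^{-1}(\theta_1),\dots,\psi^{-1}(\theta_r)$ generate $\Ext^{\geq n}_\D(F(C),D)$ as a right module over $\Ext^*_\D(F(C),F(C))$. Indeed, given $\xi\in\Ext^{m}_\D(F(C),D)$ with $m\geq n$, part (a) gives $\xi = \psi^{-1}(\theta)$ for a unique $\theta\in\Ext^m_\C(C,G(D))$; writing $\theta = \sum_j \theta_j\cdot\eta_j$ with $\eta_j\in\Ext^*_\C(C,C)$ and applying part (b) yields $\xi = \sum_j \psi^{-1}(\theta_j)\cdot F(\eta_j)$, and each $F(\eta_j)$ lies in $\Ext^*_\D(F(C),F(C))$ since $F$ is a functor compatible with Yoneda products on the resolutions $F(P_\bullet)$. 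Hence the module is finitely generated.

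For part (d), take $C = G(F(C))$ wait --- rather, apply part (c) with $D = F(C)$ and $n = 1$: the hypothesis is that $\Ext^*_\C(C, G(F(C)))$ is a finitely generated right $\Ext^*_\C(C,C)$-module, so in particular $\Ext^{\geq 1}_\C(C,G(F(C)))$ is, and part (c) gives that $\Ext^{\geq 1}_\D(F(C),F(C))$ is a finitely generated right $\Ext^*_\D(F(C),F(C))$-module. A standard argument then upgrades this to right Noetherianity of the ring $E := \Ext^*_\D(F(C),F(C))$: write $E^{\geq 1} = \sum_{j=1}^r \zeta_j E$ with $\zeta_j$ homogeneous of positive degree; then $E$ is generated as an algebra over $E^0 = \End_\D(F(C))$ by $\zeta_1,\dots,\zeta_r$ together with the finitely-generated-over-itself degree-zero part. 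Since $\Ext^*_\C(C,C)$ is right Noetherian, $E^0 = \End_\D(F(C)) \cong \Hom_\C(C,G(F(C))) \subseteq \Ext^*_\C(C,C)$-module arguments show $E^0$ is itself a Noetherian ring (it is a subquotient in degree zero), so $E$ is a finitely generated algebra over a Noetherian ring, hence a Noetherian ring by the Hilbert basis theorem for (possibly noncommutative, but the relevant product is finite) graded rings --- more precisely, a positively graded ring whose degree-zero part is right Noetherian and which is finitely generated in positive degrees is right Noetherian. The main obstacle is exactly this last implication: one must be careful that ``finitely generated as a module over itself in positive degrees'' genuinely yields ``finitely generated as an algebra'' and then invoke the correct noncommutative graded Hilbert basis statement; I would cite the standard fact (e.g.\ as used in the finite-generation-of-cohomology literature) rather than reprove it.
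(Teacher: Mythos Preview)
Your arguments for (a), (b), and (c) are essentially the same as the paper's, and they are fine.

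Part (d), however, has a genuine gap. You deduce from (c) that $E^{\geq 1}$ is finitely generated as a right $E$-module (where $E = \Ext^*_\D(F(C),F(C))$), and correctly observe this makes $E$ finitely generated as an algebra over $E^0$. But then you want to invoke a ``noncommutative graded Hilbert basis statement'' to conclude $E$ is right Noetherian. No such statement exists in this generality: a finitely generated algebra over a right Noetherian ring need not be right Noetherian --- the free algebra $k\langle x,y\rangle$ is the standard counterexample. The finite-generation-of-cohomology literature you have in mind typically establishes Noetherianity by exhibiting a \emph{central} (or at least normalising, commutative) Noetherian subring over which the whole ring is a finite \emph{module}, not merely a finitely generated algebra; you have not produced such a subring. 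Your side argument that $E^0$ is Noetherian is also not justified: $E^0 \cong \Hom_\C(C,GF(C))$ as an abelian group, but its ring structure is that of $\End_\D(F(C))$, and nothing in the hypotheses makes this a subquotient ring of $\Ext^*_\C(C,C)$.

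The paper's argument for (d) avoids this entirely and is both shorter and more robust. Set $\Sigma = F(\Ext^*_\C(C,C)) \subseteq E$. By part~(b), the isomorphism $\psi\colon E \xrightarrow{\sim} \Ext^*_\C(C,GF(C))$ intertwines the right $\Sigma$-action on $E$ with the right $\Ext^*_\C(C,C)$-action on $\Ext^*_\C(C,GF(C))$. Now take any right ideal $U \subseteq E$; it is in particular a right $\Sigma$-submodule, so $\psi(U)$ is an $\Ext^*_\C(C,C)$-submodule of $\Ext^*_\C(C,GF(C))$. The latter is a finitely generated module over a right Noetherian ring, hence Noetherian, so $\psi(U)$ is finitely generated over $\Ext^*_\C(C,C)$, whence $U$ is finitely generated over $\Sigma$ and a fortiori over $E$. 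This shows directly that every right ideal of $E$ is finitely generated, with no appeal to any Hilbert-basis-type result.
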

\begin{proof}
(a) Let $\cdots \to P_1\to P_0\to C\to 0$ be a projective resolution
of $C$ in $\C$.  Then $\cdots \to F(P_1)\to F(P_0)\to F(C)\to 0$ is a
projective resolution of $F(C)$ in $\D$.  Applying the functor
$\Hom_\C(-,G(D))$ to the first projective resolution and
$\Hom_\C(-,D)$ to the second projective resolution we obtain the
following commutative diagram where all vertical maps are
isomorphisms:
\[\xymatrix{
0\ar[r] & \Hom_\C(C,G(D)) \ar[r]\ar[d]^{\psi^{-1}} & \Hom_\C(P_0,G(D))\ar[r]\ar[d]^{\psi^{-1}}
& \Hom_\C(P_1,G(D))\ar[r]\ar[d]^{\psi^{-1}} & \cdots \\
0\ar[r] & \Hom_\D(F(C),D) \ar[r] & \Hom_\D(F(P_0),D)\ar[r]
& \Hom_\D(F(P_1),D)\ar[r] & \cdots }\]
The claim follows immediately from this commutative diagram. 

(b) Let $\eta$ be an element in $\Ext^m_\C(C',C)$ and $\theta$ an
element in $\Ext^n_\C(C,G(D))$.  Let 
\[\mathbb{P}'\colon \cdots \to P_m'\to P_{m-1}'\to \cdots \to P_0'\to
  C'\to 0\]
and 
\[\mathbb{P}\colon \cdots \to P_n\to P_{n-1}\to \cdots \to P_0\to
  C\to 0\] 
be projective resolutions of $C'$ and $C$, respectively.  Here $P_0'$
and $P_0$ are in degree $0$.  Then $\eta$ and $\theta$ can be
represented by maps $P_m'\to C$ and $P_n\to G(D)$, which we also
denote by $\eta$ and $\theta$, respectively.  These give rise to chain
maps
$\widetilde{\eta}\colon \mathbb{P}_{\geq 0}'\to \mathbb{P}_{\geq
  0}[-m]$ and $\theta\colon \mathbb{P}_{\geq 0}\to G(D)[-n]$, viewing
$G(D)$ as a stalk complex concentrated in degree $0$.  Then the Yoneda
products $\theta\eta$ and $\psi^{-1}(\theta)F(\eta)$ can be
represented by the morphisms
$\theta[-m]\widetilde{\eta} = \theta\eta_n$ and
$\psi^{-1}(\theta)F(\eta_n)$, where $\eta_n$ is a $n$-th lifting of
$\eta$.  We have the following commutative diagram of complexes
\[\xymatrix{
\Hom_\D(F(\mathbb{P}_{\geq 0}), D[-n])
\ar[r]^\psi\ar[d]_{F(\widetilde{\eta}[m])^*} & \Hom_\C(\mathbb{P}_{\geq 0}, G(D)[-n]) \ar[d]^{\widetilde{\eta}[m]^*} \\
\Hom_\D(F(\mathbb{P}_{\geq 0}'[m]), D[-n])\ar[r]^\psi&
\Hom_\C(\mathbb{P}_{\geq 0}'[m],  G(D)[-n]) 
}\]
Starting with $\psi^{-1}(\theta)$ in the upper left corner and taking
homology, it follows that the Yoneda products
$\psi^{-1}(\theta)F(\eta)$ and $\psi^{-1}(\theta\eta)$ are equal in
$\Ext^{m+n}_\D(F(C'),D)$.

(c) Let $n$ be a non-negative integer.  Assume that 
  $\Ext_\C^{\geq n}(C,G(D))$ is a finitely generated right
  $\Ext^*_\C(C,C)$-module.  Suppose $\{\theta_1,\ldots,\theta_t\}$ is
  a set of generators of $\Ext_\C^{\geq n}(C,G(D))$ as a
  $\Ext^*_\C(C,C)$-module.  Consider the set $\{\psi^{-1}(\theta_1),
  \ldots, \psi^{-1}(\theta_t)\}$ in $\Ext^{\geq n}_\D(F(C),D)$.  Then
  it follows immediately from (b) that this is a generating set as a
  $\Ext_\D^*(F(C),F(C))$-module. 

  (d) Let $\Sigma = F(\Ext^*_\C(C,C))$. Then $\Sigma$ is a subalgebra
  of $\Ext^*_\D(F(C),F(C))$.  Let $U$ be a right ideal in
  $\Ext^*_\D(F(C),F(C))$. In particular, $U$ is a right
  $\Sigma$-submodule of $\Ext^*_\D(F(C),F(C))$.  This corresponds to a
  right $\Ext^*_\C(C,C)$-submodule $U'$ of $\Ext^*_\C(C,GF(C))$.
  Since $\Ext^*_\C(C,C)$ is Noetherian and $\Ext^*_\C(C,GF(C))$ is a
  finitely generated $\Ext^*_\C(C,C)$-module, the submodule $U'$ is a
  finitely generated $\Ext^*_\C(C,C)$-module.  Then we infer that $U$
  is a finitely generated $\Sigma$-module and therefore a finitely
  generated $\Ext^*_\D(F(C),F(C))$-module, hence a finitely generated
  right ideal.  This shows that $\Ext^*_\D(F(C),F(C))$ is right
  Noetherian. 
\end{proof}
Next we state the dual result, where we leave the proof to the reader.
\begin{prop}
Let $(F,G)$ be an adjoint pair of functors as above, and assume that
$G$ is exact and preserves injective objects. 
\begin{enumerate}[\rm(a)]
\item The adjunction induces a functorial isomorphism 
\[\Ext^i_\D(F(C),D) \simeq \Ext^i_\C(C,G(D))\]
for all $i\geq 1$ and for all $C$ in $\C$ and $D$ in $\D$.  We also
denote this isomorphism by $\psi$. 
\item For every element $\eta$ in $\Ext^m_\D(D,D')$ and $\theta$ in
  $\Ext^n_\D(F(C),D)$ the following equality holds in
  $\Ext^{m+n}_\C(C,G(D'))$: 
\[\psi(\eta\cdot \theta) = G(\eta) \cdot \psi(\theta).\]
\item\sloppypar Let $n$ be a non-negative integer.  If
  $\Ext_\D^{\geq n}(F(C),D)$ is a finitely generated left
  $\Ext^*_\D(D,D)$-module, then $\Ext_\C^{\geq n}(C,G(D))$ is a
  finitely generated left $\Ext_\C^*(G(D),G(D))$-module.
\item If $\Ext^*_\D(D,D)$ is a left Noetherian ring and
  $\Ext^*_\D(F(G(D)), D)$ is a finitely generated left
  $\Ext^*_\D(D,D)$-module, then $\Ext^*_\C(G(D),G(D))$ is a left
  Noetherian ring.
\end{enumerate}
\end{prop}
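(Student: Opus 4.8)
The plan is to dualize the proof of Proposition~\ref{prop:adjointNoetherian} throughout, replacing projective resolutions by injective coresolutions and the functor $F$ by $G$. For part~(a), fix an injective coresolution $0\to D\to I^0\to I^1\to\cdots$ of $D$ in $\D$. Since $G$ is exact and sends injectives to injectives, applying $G$ produces an injective coresolution $0\to G(D)\to G(I^0)\to G(I^1)\to\cdots$ of $G(D)$ in $\C$. Applying $\Hom_\D(F(C),-)$ to the first coresolution and $\Hom_\C(C,-)$ to the second, the adjunction isomorphisms $\psi_{C,I^j}$ assemble into a commutative ladder of cochain complexes with all vertical maps isomorphisms; since an injective coresolution of the second variable computes $\Ext$, passing to cohomology gives the functorial isomorphism $\psi\colon\Ext^i_\D(F(C),D)\xrightarrow{\sim}\Ext^i_\C(C,G(D))$ for $i\ge1$ (and for $i=0$ it is the given adjunction).

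For part~(b), I would compute the relevant Yoneda products using injective coresolutions $\mathbb{I}_D$ of $D$ and $\mathbb{I}_{D'}$ of $D'$ in $\D$, keeping the source $F(C)$ fixed; this is exactly dual to the proof of Proposition~\ref{prop:adjointNoetherian}(b), which kept the target fixed and resolved the two sources. Represent $\theta\in\Ext^n_\D(F(C),D)$ by a degree-$n$ cocycle in $\Hom_\D(F(C),\mathbb{I}_D)$ and $\eta\in\Ext^m_\D(D,D')$ by a cochain map $\widetilde\eta\colon\mathbb{I}_D\to\mathbb{I}_{D'}[m]$, so that $\eta\cdot\theta$ is represented by $\widetilde\eta_*(\theta)$. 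Because $G$ is exact and preserves injectives, $G(\mathbb{I}_D)$ and $G(\mathbb{I}_{D'})$ are injective coresolutions of $G(D)$ and $G(D')$, and $G(\widetilde\eta)$ represents $G(\eta)$. Naturality of $\psi_{C,-}$ in the second variable then yields the commutative square
\[
\begin{CD}
\Hom_\D(F(C),\mathbb{I}_D[n]) @>{\psi}>> \Hom_\C(C,G(\mathbb{I}_D)[n])\\
@V{\widetilde\eta_*}VV @V{G(\widetilde\eta)_*}VV\\
\Hom_\D(F(C),\mathbb{I}_{D'}[m+n]) @>{\psi}>> \Hom_\C(C,G(\mathbb{I}_{D'})[m+n])
\end{CD}
\]
and chasing $\theta$ around it and passing to cohomology gives $\psi(\eta\cdot\theta)=G(\eta)\cdot\psi(\theta)$ in $\Ext^{m+n}_\C(C,G(D'))$.

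Parts~(c) and~(d) then follow formally, exactly as in Proposition~\ref{prop:adjointNoetherian} but with left and right interchanged and $G$ in place of $F$. For~(c): if $\{\theta_1,\dots,\theta_t\}$ generates $\Ext^{\ge n}_\D(F(C),D)$ as a left $\Ext^*_\D(D,D)$-module, then by~(b) the images $\{\psi(\theta_1),\dots,\psi(\theta_t)\}$ generate $\Ext^{\ge n}_\C(C,G(D))$ over the subalgebra $G(\Ext^*_\D(D,D))$ of $\Ext^*_\C(G(D),G(D))$, hence a fortiori over $\Ext^*_\C(G(D),G(D))$. For~(d): set $\Sigma=G(\Ext^*_\D(D,D))\subseteq\Ext^*_\C(G(D),G(D))$; applying~(a) and~(b) with $C=G(D)$ shows that $\psi$ is an isomorphism of left $\Ext^*_\D(D,D)$-modules from $\Ext^*_\D(F(G(D)),D)$ onto $\Ext^*_\C(G(D),G(D))$, the target viewed as a module through $G$. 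Hence $\Ext^*_\C(G(D),G(D))$ is a finitely generated module over the left Noetherian ring $\Ext^*_\D(D,D)$, so it is a Noetherian left $\Ext^*_\D(D,D)$-module, and therefore a Noetherian left $\Sigma$-module. A left ideal $U$ of $\Ext^*_\C(G(D),G(D))$ is in particular a left $\Sigma$-submodule, hence finitely generated over $\Sigma$ and a fortiori over $\Ext^*_\C(G(D),G(D))$; thus $\Ext^*_\C(G(D),G(D))$ is left Noetherian.

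The step I expect to be the main obstacle is part~(b): one must check that naturality of the adjunction in the second variable really gives the displayed square once both variables are promoted to complexes with shifts, that $G$ sends a cochain map representing $\eta$ to one representing $G(\eta)$ (this is precisely where exactness of $G$ and preservation of injectives are genuinely needed), and that the two representatives of the Yoneda product $\eta\cdot\theta$ obtained from the coresolutions of $D$ and of $D'$ agree up to the standard comparison homotopy. Everything else is a routine mirror image of the arguments already given for Proposition~\ref{prop:adjointNoetherian}.
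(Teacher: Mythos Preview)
Your proposal is correct and is precisely the approach the paper intends: the paper states this proposition as ``the dual result'' of Proposition~\ref{prop:adjointNoetherian} and explicitly leaves the proof to the reader, so dualizing that argument via injective coresolutions and the functor $G$ is exactly what is expected. Your treatment of part~(b), including the care about how $G(\eta)$ is defined via $G(\widetilde\eta)$, is the right level of detail.
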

We end this section by an application of the above.

Let $\nu\colon \G \to \L$ be an algebra inclusion of finite
dimensional $k$-algebras for a field $k$.  Then we have the induction
functor 
\[F = \L\otimes_\G - \colon \mod\G\to\mod\L\]
and the restriction functor 
\[H = \Hom_\L({_\L \L}_\G,-)\colon \mod\L\to\mod\G,\]
which clearly is an adjoint pair $(F,H)$ of functors. 

\begin{cor}\label{cor:algebrainclusion}
  Let $\nu\colon \G \to \L$ be an algebra inclusion of finite
  dimensional $k$-algebras for a field $k$.  Let $\mathfrak{r}$ denote
  the Jacobson radical $\rad \G$ of $\G$, and assume that
  $\L\mathfrak{r} = \rad\L$.  Moreover assume that
  $\L_\G$ is a projective $\G$-module.  

If $\Ext^*_\G(\G/\mathfrak{r},\G/\mathfrak{r})$ is right Noetherian,
then $\Ext^*_\L(\L/\rad\L,\L/\rad\L)$ is right Noetherian. 
\end{cor}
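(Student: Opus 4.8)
The plan is to apply Proposition \ref{prop:adjointNoetherian}(d) to the adjoint pair $(F,H)$ with $\C = \mod\G$ and $\D = \mod\L$, taking $C = \G/\mathfrak{r}$. For this I must verify the two hypotheses of that proposition: first, that $F = \L\otimes_\G -$ is exact and preserves projectives; second, that $\Ext^*_\G(C,C)$ is right Noetherian (which is assumed) and that $\Ext^*_\G(C, HF(C))$ is a finitely generated right $\Ext^*_\G(C,C)$-module. Once these are in place, the proposition gives that $\Ext^*_\L(F(C),F(C))$ is right Noetherian, and it remains only to identify $F(C)$ with $\L/\rad\L$.

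First I would check exactness and preservation of projectives of $F$. Since $\L_\G$ is projective, hence flat as a right $\G$-module, the functor $\L\otimes_\G -$ is exact. It obviously sends $\G$ to $\L$ and hence sends projective $\G$-modules to projective $\L$-modules (direct summands of free modules go to direct summands of free modules). Next I would identify $F(\G/\mathfrak{r}) = \L\otimes_\G \G/\mathfrak{r} = \L/\L\mathfrak{r} = \L/\rad\L$, using precisely the hypothesis $\L\mathfrak{r} = \rad\L$. So $F(C) = \L/\rad\L$ and the conclusion of the proposition becomes exactly the desired statement, once the finite generation hypothesis is confirmed.

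The main obstacle is showing that $\Ext^*_\G(C, HF(C))$ is finitely generated over $\Ext^*_\G(C,C)$, where $C = \G/\mathfrak{r}$. Here $HF(C) = \Hom_\L({_\L\L}_\G, \L/\rad\L)$, which as a $\G$-module is just the restriction of ${_\L(\L/\rad\L)}$ along $\nu$. Since $\L/\rad\L$ is semisimple as an $\L$-module and $\L\mathfrak{r} = \rad\L$, the $\G$-module structure factors through $\G/\mathfrak{r}$, so $HF(C)$ is a semisimple $\G$-module, i.e.\ a finite direct sum of copies of simple $\G$-modules (the composition factors of $\G/\rad\L$ viewed over $\G$, which are all summands of $\G/\mathfrak{r}$). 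Thus $\Ext^*_\G(C, HF(C))$ is a finite direct sum of copies of $\Ext^*_\G(C, C_i)$ for simple summands $C_i$ of $C = \G/\mathfrak{r}$; each such is a direct summand of $\Ext^*_\G(C,C)$ as a right $\Ext^*_\G(C,C)$-module (being $\Ext^*_\G(C, C e_i)$ for a suitable idempotent), hence finitely generated. Therefore $\Ext^*_\G(C, HF(C))$ is a finitely generated right $\Ext^*_\G(C,C)$-module, and Proposition \ref{prop:adjointNoetherian}(d) applies to finish the proof.
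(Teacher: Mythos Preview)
Your proof is correct and follows essentially the same approach as the paper: both apply Proposition~\ref{prop:adjointNoetherian}(d) to the induction--restriction adjunction with $C = \G/\mathfrak{r}$, after checking that $F$ is exact, preserves projectives, and satisfies $F(\G/\mathfrak{r}) \simeq \L/\rad\L$. The only minor difference is in verifying that $\Ext^*_\G(C, HF(C))$ is finitely generated: the paper proves the stronger fact that $\Ext^*_\G(\G/\mathfrak{r}, M)$ is finitely generated for \emph{every} finitely generated $\G$-module $M$ by induction on length, whereas you exploit directly that $HF(C)$ is a semisimple $\G$-module (since $\mathfrak{r}$ annihilates it) and hence a finite direct sum of summands of $C$.
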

\begin{proof} Note that $F$ is exact and takes projectives to
  projectives since $\L_\G$ is projective.  By assumption
  $\Ext^*_\G(\G/\mathfrak{r},\G/\mathfrak{r})$ is a right Noetherian
  algebra.  Using induction on the length of a module and that
  $\Ext^*_\G(\G/\mathfrak{r},\G/\mathfrak{r})$ is right Noetherian, it
  follows that $\Ext^*_\G(\G/\mathfrak{r}, M)$ is a finitely generated
  right $\Ext^*_\G(\G/\mathfrak{r},\G/\mathfrak{r})$-module for all
  finitely generated $\G$-modules $M$.

  We have $\L\mathfrak{r} = \rad\L$, the functor $F$ is exact and
  $F(\G) = \L\otimes_\G\G \xrightarrow{\mult} \L$ is an
  isomorphism. It follows that $F(\G/\mathfrak{r}) \simeq \L/\rad\L$.
  Then Proposition \ref{prop:adjointNoetherian} (d) implies that
  $\Ext^*_\L(\L/\rad\L,\L/\rad\L)$ is right Noetherian.
\end{proof}

\section{Twisted tensor product algebras}

We recall the notion of a twisted tensor product of two algebras
discussed in \cite{CR}, and the fact that a covering of an
  algebra can be viewed as a twisted tensor product.

Let $\L$ and $\G$ be two algebras over a commutative ring $k$.  Let
$\tau\colon \G\otimes_k \L\to \L\otimes_k \G$ be a linear map. Define
the following operation on $\L\otimes_k \G$ by letting 
\[(\lambda\otimes \gamma)\cdot_\tau (\lambda'\otimes \gamma') = 
\lambda \tau(\gamma \otimes \lambda')\gamma'\]
for $\lambda,\lambda'$ in $\L$ and $\gamma,\gamma'$ in $\G$.
Denote $\L\otimes_k\G$ with this structure by $\L\otimes_\tau \G$.  
If the linear map $\tau$ satisfies the following equalities
\begin{align}
(1\otimes \gamma')\cdot_\tau \tau(\gamma\otimes\lambda) & =
                                                          \tau(\gamma'\gamma\otimes
                                                          \lambda)\notag\\
\tau(\gamma\otimes \lambda)\cdot_\tau (\lambda'\otimes 1) & =
                                                            \tau(\gamma\otimes
                                                            \lambda\lambda')\notag\\
\tau(\gamma\otimes 1) & = 1 \otimes \gamma\notag\\
\tau(1 \otimes \lambda) & = \lambda\otimes 1\notag
\end{align}
for all $\lambda,\lambda'$ in $\L$ and all $\gamma,\gamma'$ in
$\G$, then $\L\otimes_\tau \G$ is an associative
algebra.  It is called a \emph{twisted tensor product of $\L$ and $\G$ over
the twisting $\tau$}.  

Assume from now on that $\L\otimes_\tau\G$ is a twisted tensor product
over the twisting $\tau$.  Then the natural maps $\nu_\L\colon \L\to
\L\otimes_\tau \G$ and $\nu_\G\colon \G\to \L\otimes_\tau \G$ given by
$\lambda\mapsto \lambda\otimes 1$ and $\gamma\mapsto 1\otimes \gamma$,
respectively are homomorphisms of algebras.  Then $\L\otimes_\tau \G$
is a $(\L\otimes_\tau\G)$-$\G$-bimodule, so that we have the induction
and the restriction functors 
\[F = \L\otimes_\tau\G\otimes_\G - \colon \mod\G\to \mod\L\otimes_\tau\G\]
and
\[G = \Hom_{\L\otimes_\tau\G}(\L\otimes_\tau\G,-) \colon
                                                 \mod\L\otimes_\tau\G\to \mod\G\]
respectively.  These functors are an adjoint pair, so we have
functorial isomorphisms 
\[\Hom_{\L\otimes_\tau\G}(\L\otimes_\tau\G\otimes_\G M,N) \simeq 
\Hom_\G(M, \Hom_{\L\otimes_\tau\G}(\L\otimes_\tau\G, N)) \simeq 
\Hom_\G(M, {_\G N}).\]
The functor $F$ preserves projective modules.  Since
  $\L\otimes_\tau\G$ is isomorphic to $\L\otimes_k\G$ as a right
  $\G$-module, it is a projective right $\G$-module if and only if
  $\L$ is a projective $k$-module.  Assume that $\L$ is a
  projective $k$-module.  
Then $F$ is an exact functor and if
$\mathbb{P} \to M$ is a projective resolution of $M$ as a $\G$-module,
then $F(\mathbb{P})\to F(M)$ is a projective resolution of $F(M)$ as a
$\L\otimes_\tau\G$-module.  Using the above adjunction we infer that 
\begin{align}
\Ext^i_{\L\otimes_\tau\G}(\L\otimes_\tau\G\otimes_\G M,N)  & \simeq
\Ext^i_\G(M,\Hom_{\L\otimes_\tau\G}(\L\otimes_\tau\G,N))\notag\\
& \simeq
  \Ext^i_\G(M, {_\G N})\notag 
\end{align}

We shall apply twisted tensor products to coverings of algebras.  To
this end we need to recall the dual of a group algebra.  Let $G$ be a
(finite) group, then
\[k[G]^* = \{ \sum_{g\in G} \alpha_g \epsilon_g\mid \text{almost all\
  } \alpha_g \text{\ are zero}\},\] 
where $\epsilon_g\epsilon_h = \delta_{g,h}\epsilon_g$.  With these
preliminaries we can state one of the main results from \cite{CR},
namely \cite[Theorem 5.7]{CR}, taking into account the final comment
in the paper concerning the extension of that result to coverings of
quivers with relations.

\begin{thm}[\protect{\cite[Theorem 5.7]{CR}}]
Let $kQ/I$ be an admissible quotient of the path algebra $kQ$.  Let
$w\colon Q_1\to G$ be a weight function into a (finite) group, and
assume that $I$ is generated by weight homogeneous elements.  Then the
covering $k\widetilde{Q}(w)/\widetilde{I}(w)$ is isomorphic to
$k[G]^*\otimes_\tau kQ/I$, where the twisting map $\tau\colon
kQ/I\otimes_k k[G]^*\to k[G]^*\otimes_k kQ/I$ is given by
$\tau(\overline{p}\otimes \epsilon_{g}) =
\epsilon_{gw(\overline{p})}\otimes \overline{p}$. 
\end{thm}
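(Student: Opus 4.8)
The plan is to write down an explicit algebra homomorphism $\Phi$ from the path algebra $k\widetilde{Q}(w)$ to the twisted tensor product $k[G]^*\otimes_\tau kQ/I$, check that it annihilates the ideal $\widetilde{I}(w)$, and then produce an explicit two-sided inverse of the induced map. As a preliminary step one verifies that the prescribed $\tau$ satisfies the four twisting axioms, so that $k[G]^*\otimes_\tau kQ/I$ really is an associative algebra; this is a routine calculation using $\epsilon_g\epsilon_h=\delta_{g,h}\epsilon_g$, the fact that the weight of a trivial path is $e$, and the multiplicativity $w(qp)=w(p)w(q)$ of the weight function (one may instead simply quote \cite{CR}).

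Define $\Phi$ on the quiver data of $\widetilde{Q}(w)$ by sending a vertex $(v,g)$ to $\epsilon_g\otimes e_v$ and an arrow $(a,g)\colon(\frako(a),g)\to(\frakt(a),gw(a))$ to $\epsilon_{gw(a)}\otimes\overline{a}$. Since the $\epsilon_g$ (for $g\in G$) and the $e_v$ (for $v\in Q_0$) are complete sets of orthogonal idempotents, the elements $\epsilon_g\otimes e_v$ form a complete set of orthogonal idempotents of $k[G]^*\otimes_\tau kQ/I$, and a short computation with the rule $(\lambda\otimes\gamma)\cdot_\tau(\lambda'\otimes\gamma')=\lambda\,\tau(\gamma\otimes\lambda')\,\gamma'$ gives
\[
(\epsilon_{gw(a)}\otimes e_{\frakt(a)})\cdot_\tau(\epsilon_{gw(a)}\otimes\overline{a})\cdot_\tau(\epsilon_g\otimes e_{\frako(a)})=\epsilon_{gw(a)}\otimes\overline{a}.
\]
By the universal property of path algebras, $\Phi$ extends uniquely to an algebra homomorphism on $k\widetilde{Q}(w)$, and induction on the length of a path, using $w(qp)=w(p)w(q)$, shows $\Phi(\widetilde{p}_g)=\epsilon_{gw(p)}\otimes\overline{p}$ for every lifted path $\widetilde{p}_g$.

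Since $I$ is generated by weight homogeneous elements and a product of weight homogeneous elements of $kQ$ is again weight homogeneous, $I$ is the $k$-span of weight homogeneous elements; hence $kQ/I=\bigoplus_{\gamma\in G}(kQ/I)_\gamma$, where $(kQ/I)_\gamma$ is spanned by the classes of paths of weight $\gamma$. For a weight homogeneous relation $\sigma=\sum_j c_jp_j\in I$ of weight $\gamma$ one has $\Phi(\widetilde{\sigma}_g)=\sum_j c_j\,\epsilon_{g\gamma}\otimes\overline{p_j}=\epsilon_{g\gamma}\otimes\overline{\sigma}=0$. As $\widetilde{I}(w)$ is the ideal generated by the liftings of a (weight homogeneous) minimal generating set of $I$, it follows that $\Phi$ vanishes on $\widetilde{I}(w)$ and therefore descends to an algebra homomorphism $\overline{\Phi}\colon k\widetilde{Q}(w)/\widetilde{I}(w)\to k[G]^*\otimes_\tau kQ/I$.

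It remains to invert $\overline{\Phi}$. On the summand $\epsilon_g\otimes(kQ/I)_\gamma$ define $\Psi$ by letting $\Psi(\epsilon_g\otimes\overline{u})$ be the class of $\widetilde{u}_{g\gamma^{-1}}$ in $k\widetilde{Q}(w)/\widetilde{I}(w)$, where $u\in kQ$ is any weight homogeneous lift of $\overline{u}$ of weight $\gamma$; this is well defined because two such lifts differ by a weight homogeneous element of $I$ of weight $\gamma$, whose lifting at $g\gamma^{-1}$ lies in $\widetilde{I}(w)$ (recall that $\widetilde{I}(w)$ is exactly the set of liftings of elements of $I$). Summing over $g$ and $\gamma$ yields a $k$-linear map $\Psi\colon k[G]^*\otimes_\tau kQ/I\to k\widetilde{Q}(w)/\widetilde{I}(w)$, and the identities $\Phi(\widetilde{p}_g)=\epsilon_{gw(p)}\otimes\overline{p}$ together with $\deg\overline{p}=w(p)$ give at once $\overline{\Phi}\circ\Psi=\id$ and $\Psi\circ\overline{\Phi}=\id$. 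Hence $\overline{\Phi}$ is bijective, and being an algebra homomorphism it is an isomorphism, as claimed. The one place where genuine care is needed is the bookkeeping of the weight grading: one must use essentially that $I$ is weight homogeneous, so that reduction modulo $I$ downstairs corresponds precisely to reduction modulo $\widetilde{I}(w)$ upstairs — equivalently, that $\widetilde{I}(w)$ is exactly the $k$-span of all liftings of weight homogeneous elements of $I$.
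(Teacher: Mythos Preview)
The paper does not prove this theorem; it is quoted from \cite{CR} (Corson--Ratkovich) without proof and only illustrated by Example~\ref{exam:covering}. Your argument is a correct direct verification: the map $\Phi$ on lifted paths, the check that it kills $\widetilde{I}(w)$ via weight-homogeneity of $I$, and the explicit inverse $\Psi$ built from the weight grading on $kQ/I$ are all sound, and your use of $w(qp)=w(p)w(q)$ matches the paper's convention $w(a_n\cdots a_1)=w(a_1)\cdots w(a_n)$. There is nothing on the paper's side to compare against; what you have written is essentially the expected proof (and presumably close to the one in \cite{CR}).
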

For an illustration see Example \ref{exam:covering}. 

Using the general setup with adjoint pair of functors for a twisted
tensor product of algebras reviewed above, we apply Proposition
\ref{prop:adjointNoetherian} to coverings of quivers with relations by
a finite group. 

\begin{prop}\label{prop:fingencoverings}
  Let $\G = kQ/I$ be an admissible quotient of the path algebra $kQ$.
  Let $w\colon Q_1\to G$ be a weight function into a finite group,
  and assume that $I$ is generated by weight homogeneous elements.
  Assume that $\Ext^*_\G(\G/\mathfrak{r},\G/\mathfrak{r})$ is a right Noetherian
  algebra.  Let $\L = k\widetilde{Q}(w)/\widetilde{I}(w)$.
  \begin{enumerate}[\rm(a)]
  \item Let $T$ be the direct sum of all isomorphism classes of simple
    $\L$-modules. Then
    $\Ext^*_{\L}(T, T)$ is a right Noetherian algebra.
\item For each simple $\L$-module $S$ the algebra
  $\Ext^*_{\L}(S,S)$ is right Noetherian. 
\end{enumerate}
\end{prop}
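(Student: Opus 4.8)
The plan is to deduce both parts from Proposition \ref{prop:adjointNoetherian}(d) applied to the twisted tensor product description of the covering $\L = k\widetilde{Q}(w)/\widetilde{I}(w) \cong k[G]^*\otimes_\tau \G$ furnished by \cite[Theorem 5.7]{CR}. First I would set up the adjoint pair: with $F = \L\otimes_\tau\G\otimes_\G -\colon \mod\G\to\mod\L$ and its right adjoint $G = \Hom_\L(\L,-)\colon\mod\L\to\mod\G$ (restriction along $\nu_\G$), the discussion preceding the theorem already shows that $F$ is exact and preserves projectives, since $k[G]^*$ is a free, hence projective, $k$-module. The key structural input is that $F$ sends the direct sum of simple $\G$-modules to a module closely related to $T$: because the covering has the same relations lifted and the radical of $\L$ is the lift of $\rad\G$, one checks that $\L(\rad\G) = \rad\L$, so $F(\G/\mathfrak{r}) \cong \L/\rad\L \cong T$. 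Thus part (a) will follow once I verify the finite generation hypothesis of Proposition \ref{prop:adjointNoetherian}(d), namely that $\Ext^*_\G(\G/\mathfrak{r}, GF(\G/\mathfrak{r}))$ is a finitely generated right $\Ext^*_\G(\G/\mathfrak{r},\G/\mathfrak{r})$-module.

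For that finite-generation step I would argue as follows. The module $GF(\G/\mathfrak{r})$ is just the restriction to $\G$ of $T = \L/\rad\L$, which is a finite-dimensional, hence finite-length, $\G$-module. Using the hypothesis that $\Ext^*_\G(\G/\mathfrak{r},\G/\mathfrak{r})$ is right Noetherian, an induction on the composition length of a $\G$-module $M$ (exactly as in the proof of Corollary \ref{cor:algebrainclusion}) shows that $\Ext^*_\G(\G/\mathfrak{r}, M)$ is a finitely generated right $\Ext^*_\G(\G/\mathfrak{r},\G/\mathfrak{r})$-module for every finite-length $M$; the base case is a summand of $\G/\mathfrak{r}$ and the inductive step uses the long exact sequence in $\Ext^*_\G(\G/\mathfrak{r},-)$ attached to a short exact sequence together with Noetherianity. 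Applying this with $M = {}_\G T$ gives the required finite generation, and Proposition \ref{prop:adjointNoetherian}(d) then yields that $\Ext^*_\L(T,T)$ is right Noetherian, proving (a).

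For part (b), fix a simple $\L$-module $S$; it is a direct summand of $T$, say $T = S \oplus T'$. Writing $e\in\End_\L(T)$ for the idempotent projecting onto $S$, the ring $\Ext^*_\L(S,S)$ is the corner ring $e\,\Ext^*_\L(T,T)\,e$. A corner ring $eRe$ of a right Noetherian ring $R$ at an idempotent $e$ is again right Noetherian — one shows any right ideal $U$ of $eRe$ satisfies $U = (UR)\cap eRe$ and that $UR$ is finitely generated over $R$, whence $U$ is finitely generated over $eRe$. Applying this with $R = \Ext^*_\L(T,T)$, which is right Noetherian by (a), gives that $\Ext^*_\L(S,S)$ is right Noetherian.

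The main obstacle is the bookkeeping in identifying $F$ of the semisimple quotient with $T$ and, more precisely, confirming $\L\,\rad\G = \rad\L$ for coverings; this rests on the fact that the lifted relations $\widetilde I(w)$ generate an admissible ideal in $k\widetilde Q(w)$ with the lifted arrows spanning $\rad\L/\rad^2\L$, which is implicit in the covering construction recalled in Section 8. Everything else — exactness and projectivity-preservation of $F$, the length induction, and the corner-ring argument — is routine once that identification is in hand.
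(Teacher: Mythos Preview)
Your proposal is correct and follows essentially the same route as the paper. For (a) the paper packages the argument by invoking Corollary~\ref{cor:algebrainclusion} after verifying $\rad\L = k[G]^*\otimes_\tau\mathfrak{r} = \L\mathfrak{r}$ via the twisted tensor product description, which is precisely your reasoning unpacked; for (b) the paper cites \cite[Proposition 2.3 (i)]{S} for the fact that the corner $e\,\Ext^*_\L(T,T)\,e$ is right Noetherian, whereas you supply the standard direct argument, but the content is the same.
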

\begin{proof}
  (a) We have that $\L \simeq k[G]^*\otimes_\tau \G$
  as defined above. Then we have an inclusion of finite
  dimensional $k$-algebras $\nu\colon \G\to \L$.  The right
  $\G$-module $\L$ is isomorphic to $\G^{|G|}_\G$, hence free and
  in particular projective.  By the definition of the twisting $\tau$,
  the subset $k[G]^*\otimes_\tau\mathfrak{r}$ is an ideal in $\L$ and
  in fact a nilpotent ideal in $\L$.  Furthermore, since the weight of
  the trivial paths is the identity in $G$, it is straightforward to
  see that
\[(k[G]^*\otimes_\tau \G)/(k[G]^*\otimes_\tau \mathfrak{r}) \simeq
  k[G]^*\otimes_\tau \G/\mathfrak{r} \simeq k[G]^*\otimes_k
  \G/\mathfrak{r},\]
where in the last isomorphic we use that the weight of any vertex is
the identity in $G$.  The last algebra in the above formula is clearly
semisimple.  It follows that  
\[\rad\L = k[G]^*\otimes_\tau\mathfrak{r} = (k[G]^*\otimes_k
  \G)(1\otimes\mathfrak{r})
  \]
Now we can apply Corollary \ref{cor:algebrainclusion} to obtain (a). 

(b) Let $T = \oplus_{i=1}^n S_i$, where $S_i$ is a simple module.  Let
$f = f_1$ be the idempotent in $\Hom_{k[G]^*\otimes_\tau\G}(T,T)$
corresponding to the direct summand $S=S_1$ of $T$.  Then
$P = f\Ext^*_{k[G]^*\otimes_\tau\G}(T,T)$ is a finitely generated
projective right $\Ext^*_{k[G]^*\otimes_\tau\G}(T,T)$-module.
Since $\Ext^*_{k[G]^*\otimes_\tau\G}(T,T)$ is a right Noetherian
algebra, it follows from \cite[Proposition 2.3 (i)]{S} (see also
\cite{Ha,F}) that
  \[\End_R(P) \simeq f \Ext^*_{k[G]^*\otimes_\tau\G}(T,T)f
    \simeq \Ext^*_{k[G]^*\otimes_\tau\G}(S,S)\] 
  is right Noetherian. This completes the proof of the proposition. 
\end{proof}

\section{Cohomology rings}

In this final section we show that the cohomology
ring of the various algebras occurring the in the classification of
the finite dimensional connected Hopf algebras of dimension $p^3$
are Noetherian, except for the algebras of type \textbf{A5}
for $p > 2$.

Before reviewing the cohomology rings of the algebras in the
classification we recall the following general result on cohomology of
tensor products of algebras. 

Let $\L$ and $\G$ be two (finite dimensional) $k$-algebras for a field
$k$.  Then we can form their tensor product $\L\otimes_k\G$ with
componentwise multiplication.  Given a $\L$-module $M$ and a
$\Gamma$-module $N$, their tensor product $M\otimes_k N$ over $k$ is
in a natural way a module over $\L\otimes_k\G$.  Furthermore it is
well-known that 
\[\Ext^*_{\L\otimes_k\G}(M\otimes_k N,M\otimes_k N) \simeq
\Ext^*_\L(M,M)\overline{\otimes}_k \Ext^*_\G(N,N),\]
where $\overline{\otimes}_k$ denotes the usual tensor product with the
product of homogeneous elements of degrees $d_1$ and $d_2$ is twisted
by $(-1)^{d_1d_2}$ (that is, odd degree elements anti-commute).  In
particular, if $\L$ and $\G$ are augmented $k$-algebras, then $k$ is
in a natural way a $\L$-module and a $\G$-module, so that we can
consider the cohomology rings $\Ext^*_\L(k,k)$ and $\Ext^*_\G(k,k)$.
Then it follows from the above that 
\begin{equation}\label{eq:kunneth}
\Ext^*_{\L\otimes_k\G}(k,k) \simeq
\Ext^*_\L(k,k)\overline{\otimes}_k\Ext^*_\G(k,k).
\end{equation}

When $k$ is a field of characteristic $p$, then
$kC_p \simeq k[x]/\langle x^p\rangle$.  Hence it is of interest to
discuss the cohomology ring of algebras of the form
$k[x]/\langle x^n\rangle$ for $n\geq 2$.  When
$\L_n=k[x]/\langle x^n\rangle$ for $n\geqslant 2$, then we have (see
\cite[Lemma 5.2]{BO})
\begin{equation}\label{eq:truncpoly}
\Ext^*_{\L_n}(k,k) \simeq \begin{cases}
k[y], & \text{when $n=2$,}\\
k[y,z]/\langle y^2\rangle, & \text{when $n>2$}
\end{cases}
\end{equation}
where the degree of $y$ is $1$ and the degree of $z$ is $2$. 

\subsection{Semisimple algebras}

Let $H$ be a semisimple Hopf algebra.  Since the global dimension of a
semisimple algebra is zero, it is well-known that cohomology algebra
of $H$ is isomorphic to $k$.  This gives the cohomology
ring of the only semisimple algebra in the classification, namely the
case \textbf{C1}.  The cohomology ring is clearly Noetherian.

\subsection{Group algebras} 

The algebras of type \textbf{A3} = \textbf{C4}, \textbf{C7},
\textbf{C9} and \textbf{C10} are elementary abelian groups, $(C_p)^r$
for some $r\geq 1$.  For a field $k$ of characteristic $p$ and
$k(C_p)^r$ it is known that
\begin{equation}\label{eq:groupkunneth}
H^*((C_p)^r,k) = \Ext^*_{k(C_p)^r}(k,k) =
  \wedge(x_1,x_2,\ldots,x_r)\otimes_k k[y_1,y_2,\ldots,y_r]
\end{equation}
where the degree of $x_i$ is $1$ and the degree of $y_i$ is $2$.
Hence this describes the cohomology rings of the above mentioned
algebras. 

The algebras of type \textbf{A2} = \textbf{C2} and \textbf{C8} are
group algebras of cyclic groups of order $p^3$ and $p^2$,
respectively.  Since the field $k$ has characteristic $p$ they are
isomorphic to $k[x]/\langle x^{p^3}\rangle$ and $k[x]/\langle
x^{p^2}\rangle$.  The structure of the cohomology ring of these
algebras are then described by \eqref{eq:truncpoly}. 

For the algebra of type \textbf{A4} = \textbf{C3} we have to use both
\eqref{eq:kunneth} and \eqref{eq:truncpoly} to describe the cohomology
ring.

In all cases the cohomology ring of the algebra is Noetherian. 

\subsection{Selfinjective Nakayama algebras}
Let $\L_{n,t} = k\widetilde{\mathbb{A}}_{n-1}/J^t$ for a field $k$,
which is a selfinjective with $n$ indecomposable projective modules of
length $t$.  The $\Ext$-algebra of a simple module over $\L_{1,t}$ is
described in \eqref{eq:truncpoly}.  So we assume that $n > 1$. 

\begin{lem}
  Let $S$ be a simple module over $\L_{n,t}$, and $t = qn + r$ for
  $q \geq 0$ and $0 \leq r < n$ with $n > 1$.  Define $l_1$ to be the
  order of the subgroup $\langle r + \mathbb{Z}n\rangle$ of
  $\mathbb{Z}_n$, and define $l_2 \geq 0$ smallest possible such that
  $l_2r +1 \equiv 0 \mod n$, if such an $l_2$ exists.  Then the
  $\Ext$-algebra of $S$ is given as
\[\Ext^*_{\L_{n,t}}(S,S) \simeq 
\begin{cases}
k[x], \text{\ if $l_2$ doesn't exist},\\
k[x,y]/\langle y^2\rangle, \text{\ if $l_2$ exists,}
\end{cases}\]
where $x$ has degree $2l_1$ and $y$ has degree $2l_2 + 1$. 
\end{lem}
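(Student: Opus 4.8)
The strategy is to construct an explicit minimal projective resolution of the simple module $S = S_0$ over $\L_{n,t}$ and to read off the $\Ext$-algebra from its periodicity. First I would recall the standard structure theory of $\widetilde{\mathbb A}_{n-1}$ as an oriented cycle: label the vertices by $\Z_n$, so that the arrow out of vertex $i$ goes to vertex $i+1$, and the indecomposable projective $P_i$ has radical series of length $t$ passing through vertices $i, i+1, \dots, i+t-1$ (indices mod $n$). In particular $P_i / \rad P_i \simeq S_i$ and $\mathrm{soc}\, P_i \simeq S_{i+t-1} = S_{i+r-1}$, using $t \equiv r \pmod n$. The projective cover of $S_0$ is $P_0$, and $\Omega S_0 = \rad P_0$ has top $S_1$; but $\rad P_0$ is \emph{not} projective unless $t$ is a multiple of $n$. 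The key computation is that $\Omega^2 S_0 \simeq S_{r}$ (shift by $r$): indeed $\Omega S_0 = \rad P_0$ is uniserial of length $t-1$, its projective cover is $P_1$, and $\Omega^2 S_0$ is the kernel, which is the uniserial module starting at vertex $t = r$ of length $t - (t-1)\cdots$ — more precisely one checks $\Omega^2 S_j \simeq S_{j+r}$ for every $j$. This is the engine of the whole argument.

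Next I would iterate: $\Omega^{2m} S_0 \simeq S_{mr}$, so the minimal resolution is eventually periodic precisely when $mr \equiv 0 \pmod n$, i.e.\ with period $2 l_1$ in the $\Omega$-degree where $l_1 = |\langle r + \Z n\rangle|$ is the additive order of $r$ in $\Z_n$. This produces a periodic element $x \in \Ext^{2 l_1}_{\L_{n,t}}(S_0,S_0)$ of degree $2l_1$. To decide whether there is also an odd-degree generator, I would examine $\Ext^1$ and more generally the odd cohomology: $\Ext^{2m+1}_{\L_{n,t}}(S_0,S_0) = \Hom(\Omega^{2m+1}S_0, S_0)$. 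Writing $\Omega^{2m+1}S_0 = \Omega(S_{mr}) = \rad P_{mr}$, which is uniserial with top $S_{mr+1}$ and socle $S_{mr+r-1} = S_{(m+1)r-1}$, a nonzero map to $S_0$ exists iff either the top or the socle is $S_0$, i.e.\ iff $mr + 1 \equiv 0 \pmod n$ or $(m+1)r - 1 \equiv 0 \pmod n$; the first condition is $l_2 r + 1 \equiv 0$. One checks these two conditions are equivalent (multiply by $-1$ and shift), so odd cohomology is nonzero exactly when such an $l_2$ exists, and then it is one-dimensional in each degree $2l_2 + 1 + 2l_1 \Z_{\geq 0}$ arising from $y \cdot x^{\bullet}$ for the degree-$(2l_2+1)$ class $y$. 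Finally I would verify the relation $y^2 = 0$: $y^2$ lives in degree $2(2l_2+1)$, which is even; since all even-degree cohomology is spanned by powers of $x$ and $2(2l_2+1)$ need not be a multiple of $2l_1$, in fact one shows by a Yoneda-splice / dimension count that $y^2$ must vanish (the relevant $\Ext$ group is either zero or already accounted for, forcing $y^2 = 0$ by minimality of the resolution, or equivalently by the graded-commutativity of $\Ext^*$ over a field together with $y$ having odd degree, though over char $p$ graded-commutativity alone does not give $y^2 = 0$, so the argument must be the explicit resolution). Assembling: if no $l_2$ exists, $\Ext^*(S_0,S_0) = k[x]$ with $|x| = 2l_1$; if $l_2$ exists, $\Ext^*(S_0,S_0) = k[x,y]/\langle y^2\rangle$ with $|x| = 2l_1$, $|y| = 2l_2+1$.

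**Reduction to $S_0$ and the main obstacle.**
By the rotational symmetry of $\widetilde{\mathbb A}_{n-1}$ (the cyclic automorphism $i \mapsto i+1$ of the quiver), all simple modules have isomorphic $\Ext$-algebras, so it suffices to treat $S = S_0$; I would note this at the outset to justify the phrasing ``let $S$ be a simple module.'' The main obstacle I anticipate is twofold: (i) getting the syzygy shift $\Omega^2 S_j \simeq S_{j+r}$ exactly right, including the degenerate cases $r = 0$ (where $\L_{n,t}$ is self-injective with $S$ periodic of period dividing $2$ in a trivial way — here $l_1 = 1$, $x$ has degree $2$, and one must check $l_2$ never exists since $0 \cdot l_2 + 1 \equiv 1 \not\equiv 0$, consistent with the clean $k[x]$ answer) and $r = 1$ (where $\L_{n,t}$ is the path algebra situation with $\Omega^2 S_j \simeq S_{j+1}$, $l_1 = n$, and $l_2 = n-1$ always exists); and (ii) pinning down the multiplicative structure — that the periodicity class $x$ is genuinely a polynomial generator with no relations (clear, since $\Ext^*(S,S)$ is infinite-dimensional with at most one basis element in each degree along the relevant arithmetic progressions) and that $y^2 = 0$ rather than $y^2 = x^{(2l_2+1)/l_1}$ when $l_1 \mid 2l_2+1$. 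I expect the cleanest route to $y^2 = 0$ is to compute the Yoneda product directly on the explicit periodic resolution — the composite of the chain map representing $y$ with its own shift lands in a component that the minimality of the resolution forces to be zero — rather than appealing to any abstract graded-commutativity, which fails in characteristic $2$.
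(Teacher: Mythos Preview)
Your approach is essentially identical to the paper's: compute the tops of the syzygies $\Omega^{2l}S$ and $\Omega^{2l+1}S$ from the uniserial structure of the indecomposable projectives over $\L_{n,t}$, read off which $\Ext^m(S,S)$ are nonzero, and then identify the multiplicative structure (the paper is in fact terser than you, disposing of the ring structure with ``direct computations show\ldots'' where you sketch why $y^2=0$). One small slip: for a uniserial module $U$ one has $\Hom(U,S_0)\neq 0$ iff $\mathrm{top}(U)\simeq S_0$, not ``top or socle''---only the congruence $mr+1\equiv 0\pmod n$ is relevant, which is indeed the condition you go on to use.
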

\begin{proof}
Each indecomposable module over $\L_{n,t}$ can be identitified with
the radical layers, for instance, the projective associated to vertex
$1$ is 
\[(\underbrace{1,2,\ldots,n,1,2,\ldots,n,\ldots,1,2,\ldots,n}_q,1,2,\ldots,
  r),\] where $1$ symbolize the simple module associated to vertex
$1$, and so on.  Or for short $((1,2,\ldots,n)^q,1,2,\ldots,r)$.  Then
it is easy to see that the top of the $2l$-th syzygy is $lr + 1$ and
the top of the $(2l + 1)$-th syzygy is $lr + 2$.  These numbers are
always taken module $n$, where we choose $\{ 1,2,\ldots, n\}$ as
representatives for the different equivalence classes.  The smallest
$l\geq 1$ such that $lr +1 \equiv 1 \mod n$ is the order of the
subgroup $\langle r + \mathbb{Z}n\rangle$ of $\mathbb{Z}_n$, that is,
the top of the $2l$-th syzygy is $1$.  This means that
$\Ext^{2l_1r}_{\L_{n,t}}(S,S) \simeq k$ for all $r\geq 1$.  If there
exists a smallest $l_2$ such that $l_2r + 2\equiv 1 \mod n$, that is,
the top of the $(2l + 1)$-th syzygy is $1$.  This means that
$\Ext^{2(l_2+l_1r) + 1}_{\L_{n,t}}(S,S) \simeq k$ for all $r\geq 1$.
Direct computations show that the structure of the $\Ext$-algebra of
the simple module is as claimed.
\end{proof}
Using this we find that the cohomology ring of the algebra of type
\textbf{B2} is $k[x]$ with the degree of $x$ equal to $2p$, of the
algebra of type \textbf{C12} is $k[x]$ with the degree of $x$ equal to
$2$, of the algebra of type \textbf{C13} is $p$ copies of $k[x]$ with
the degree of $x$ equal to $2$ and of the algebra of type \textbf{C14}
is $k[x]\oplus (M_p(k))^{p-1}$ with the degree of $x$ equal to $2$. 

In all cases the cohomology ring of the algebra is Noetherian.

\subsection{Enveloping algebras of restricted Lie algebras}

The enveloping algebras of restricted Lie algebras are finite
dimensional cocommutative Hopf algebras, and for such algebras the
cohomology ring is finitely generated by \cite[Theorem 1.1]{FS}.  This
shows that the cohomology rings of the algebras of
type \textbf{C5}, \textbf{C6} and \textbf{C15} are Noetherian.

\subsection{Coverings of local algebras} 
Let $k$ be a field of characteristic $p$.  Let 
\[\L_1 = k\langle y, z\rangle/\langle y^p, yz - zy, z^p\rangle\quad
  \text{and}\quad \L_2 = k\langle 
y, z\rangle/\langle y^p, yz - zy - y^2, z^p\rangle.\]  
The algebra $\L_1$ is isomorphic to $k[y]/\langle y^p\rangle\otimes_k
k[z]/\langle z^p\rangle\simeq k(C_p\times C_p)$, so that using \eqref{eq:truncpoly} and
\eqref{eq:kunneth} or \eqref{eq:groupkunneth} one can describe the
structure of the cohomology ring of this algebra and it is a
Noetherian algebra.   

For $p = 2$ the algebra $\L_2$ is given by
$\L_2 = k\langle y,z\rangle/\langle y^2, yz-zy, z^2\rangle$, which is
a Koszul algebra.  Then
$\Ext^*_{\L_2}(k,k)\simeq k\langle y,z\rangle/\langle yz + zy\rangle$,
which is Noetherian.  For $p > 2$ the cohomology ring of $\L_2$ is
shown to be Noetherian in \cite[Section 4]{NW}.  

The algebras $H$ of type \textbf{B1} = \textbf{C11} and type
\textbf{C16} are coverings of the algebra $\L_1$ and the algebras of
type \textbf{B3} are coverings of $\L_2$.  Then by Proposition
\ref{prop:fingencoverings} (b) it follows that cohomology ring
$S = \Ext^*_H(k,k)$ of $H$ is right Noetherian.  Since $S$ is graded
commutative, it follows that $S$ is Noetherian. 

\subsection{Other local algebras} 
The cohomology ring of the algebra of type \textbf{A5} is to our
knowledge unknown in general.  When $p = 2$, it is a semidihedral
algebra of dimension $8$.  This algebra is the smallest algebra in a
family which contains group algebras of semidiheral $2$-groups.  The
cohomology of semidihedral $2$-groups is known and a presentation can
be found in \cite{Sasaki}.  The cohomology of the algebra of type
\textbf{A5} for $p = 2$ is closely related.

\subsection{Conclusion} Our investigations show that there is only one
case where we do not know that the cohomology ring is Noetherian,
namely the algebras of type \textbf{A5} for $p>2$.

\end{document}